\documentclass[12pt, reqno]{amsart}
\usepackage[colorlinks]{hyperref}
\usepackage{amsmath,amssymb,latexsym, amscd}
\usepackage{mathabx}
\usepackage{mathtools}
\usepackage{exscale, cite, eps fig, graphics}
\usepackage{float}
\usepackage{mathtools}
\usepackage{graphicx}
\usepackage{import, pst-plot, comment}
\usepackage{subcaption}

\usepackage[shortlabels]{enumitem}
\usepackage{lipsum}

\usepackage{graphicx}
\usepackage{soul, comment}
\usepackage[mathscr]{euscript}
\usepackage{longtable} 
\usepackage{tcolorbox}
\usepackage{array}
\usepackage{multirow}
\usepackage{mathrsfs}
\newcommand{\no}{\noindent}
\renewcommand{\Re}{\mbox{Re}}

\newcommand\MyBox[2]{
  \fbox{\lower0.75cm
    \vbox to 1.7cm{\vfil
      \hbox to 1.7cm{\hfil\parbox{1.4cm}{#1\\#2}\hfil}
      \vfil}%
  }%
}

\calclayout

\renewcommand{\geq}{\geqslant}
\renewcommand{\leq}{\leqslant}

\newcommand{\J}{\mathcal{J}}

\newcommand{\R}{\mathbb{R}}
\newcommand{\Z}{\mathbb{Z}}
\newcommand{\T}{\mathbb{T}}
\newcommand{\C}{\mathbb{C}}
\newcommand{\N}{\mathbb{N}}

\newtheorem{theorem}{Theorem}[section]
\newtheorem{lemma}[theorem]{Lemma}
\newtheorem{proposition}[theorem]{Proposition}

\newtheorem{definition}[theorem]{Definition}

\newtheorem*{main-theorem}{Main Theorem}
\newtheorem*{remark*}{Remark}

\newtheorem{hypothesis*}[theorem]{Hypothesis}
\numberwithin{equation}{section}

\newcommand{\beq}{\begin{equation}}
\newcommand{\eeq}{\end{equation}}

\title[The Benjamin-Feir instability in generalized KdV equations]{The Benjamin-Feir instability in KdV-like equations with general dispersion and monomial nonlinearity}
\author[Kaushik]{Bhavna~Kaushik$^\dagger$}
\author[Deconinck]{Bernard~Deconinck$^\dagger$}
\address{$^\dagger$Department of Applied Mathematics, University of Washington, Seattle, WA 98195-3925, USA
}
\email{bhavnak@uw.edu, deconinc@uw.edu
}
\date{\today}

\begin{document}

\begin{abstract}

Nonlinear waves in dispersive media 
can be succeptible to modulational instabilities. 
We examine a category of scalar equations, 
with general dispersion and monomial nonlinearity, including a large variety of KdV-like equations. For small-amplitude traveling wave solutions, we provide a complete characterization of the spectrum near the origin of the linear operator obtained from linearizing about periodic traveling waves. We prove rigorously that, when the modulational instability is present, the spectrum connected to the origin consists of curves that invariably form a closed figure-eight pattern.


\end{abstract}

\maketitle

\section{Introduction}

We study the stability of small-amplitude solutions of a large family of scalar partial differential equations of KdV type, 

\begin{equation}\label{e:gkdv}
    u_t+(\mathcal{J}u+\alpha u^{N})_x=0, \quad x,t\in\R,
\end{equation}

\no where $N\in\N\setminus\{1\}$ and $u=u(x,t)$ is real valued. The indices $x$ and $t$ denote partial differentiation. Here the operator $\J$ is defined by its Fourier symbol

\begin{equation}\label{e:M}
\widehat{\mathcal{J}f}(k)=\jmath(k)\hat{f}(k),  
\end{equation}

\no and $\hat f(k)$ is the Fourier transform of $f(x)$: 

\[
\hat f(k)=\int_{\mathbb{R}} e^{-ikx} f(x) dx. 
\]

\no This allows for the incorporation of a variety of linear dispersive phenomena, see below. 
The parameter $\alpha\in\{-1,1\}$ is related to the focusing or defocusing nature of the equation. For even $N$, $\alpha$ is chosen to be 1, without loss of generality through the use of a scaling transformation. For odd $N$, the sign of $\alpha$ matters as no real-valued scaling transformation switches between the two equations. 

Our study is motivated by the long history of considering different dispersion models and different nonlinearities, briefly reviewed next. The interplay between nonlinearity and dispersion in nonlinear dispersive evolution equations in scalar partial differential equations starts with the advected Korteweg-de Vries (KdV) equation,

\[
u_t+ c u_x-u_{xxx}+\left(u^2\right)_x=0,
\]

\no where $c$ is a real parameter. This equation is a generic model for the unidirectional propagation of weakly nonlinear, small-amplitude waves in the long-wavelength regime in a dispersive medium, {e.g.}, long waves in shallow water (see \cite{Ablowitz1981SolitonsTransform, Lannes2013TheAsymptotics} and references therein).

It is well known that while the KdV equation effectively describes long waves in dispersive media, such as traveling solitary and periodic waves, it does not capture high-frequency effects. These include wave breaking, which involves the formation of bounded solutions with infinite gradients, and peaking, which refers to the existence of bounded steady solutions featuring singular points such as peaks or cusps. In the case of water waves, this is manifested by the phase velocity derived from the linear part of the KdV equation poorly approximating the actual phase velocity of water waves outside the long-wavelength regime, for instance. For unidirectional water waves governed by the Euler water wave problem \cite{Lannes2013TheAsymptotics}, the phase speed can be expanded as

\[
v_p(k)=\sqrt{\frac{\tanh(k)}{k}} =1-\frac{1}{6}k^2+\mathcal{O}(k^4),~~|k|\ll 1,
\]

\no where $k$ represents the wavenumber. The first two terms on the right give rise to the linear KdV equation, but they provide a poor approximation for large values of $k$.   

To address water wave phenomena beyond the long-wavelength regime in the context of a simple scalar equation, Whitham proposed the model

\begin{equation}\label{e:whitham}
u_t+\mathcal{J}u_x+\left(u^2\right)_x=0,
\end{equation}

\no now referred to as the Whitham equation \cite[p. 477]{W74}. Here $\widehat{\mathcal{J}f}(k) = v_p(k)\hat{f}(k)$. The Whitham equation incorporates the phase speed of unidirectional Euler water waves along with the generic KdV shallow-water nonlinearity, providing a paradigm incorporating both wave breaking and peaking phenomena. Recent studies have confirmed its efficacy in modeling these effects (see \cite{Hur_breaking, EK_global, EW_WhithamConj}). 

Recently, following Whitham's lead, many models incorporating additional physical effects such as surface tension, constant vorticity, and bidirectional wave propagation with dispersion specified   through a Fourier symbol (see, for example,  \cite{HJ_SurfaceTension, CJ_num1, EJC, EJMR, JW, JTW, Carter_2018} and references therein) have been proposed and examined.
In general, such scalar models are given by \eqref{e:gkdv}
where the operator $\J$ takes on different functional forms:
%
%
$\jmath(k) = 1-|k|^\beta$ $(\beta > 1)$ in \eqref{e:whitham} yields the Fractional KdV (fKdV) equation \cite{Johnson2013StabilityEquations}, $\jmath(k) = 1-|k|$ corresponds to the Benjamin-Ono (BO) equation~\cite{BO}, $\jmath(k)=k\coth k$ gives the Intermediate Long wave (ILW) equation \cite{ILW} and $\jmath(k)=\sqrt{\tanh k/k}$ reproduces the original Whitham equation \eqref{e:whitham} \cite{W74}. The Kawahara equation \cite{kawahara1972oscillatory}, which models capillary–gravity waves and other higher-order dispersive phenomena, arises when $\jmath(k) = a k^2 + b k^4$ for $a \in \mathbb{R}$ and $b > 0$.

Similarly, one may consider more general nonlinearities, as in the case of the modified KdV equation \cite{Ablowitz1981SolitonsTransform} or the so-called generalized KdV equation, see~\cite{aud24}, for instance. This leads to \eqref{e:M} as a scalar model incorporating general dispersion and general monomial nonlinearity. 

The modulational instability of periodic traveling waves is a phenomenon that describes the instability of these waves in dispersive media when subjected to long-wave perturbations. This instability leads to the breakdown of wave trains and has been a topic of study since the pioneering works of Benjamin~\cite{B67} and Whitham~\cite{W67}. In the context of surface water waves, the instability is referred to as the Benjamin-Feir instability, recognizing the influential experimental work presented in~\cite{BF}. The instability is found in plasma physics, nonlinear optics and many other applications, as discussed in the review article by Zakharov \& Ostrovsky~\cite{OZ09}.

The modulational instability manifests itself by the presence of an $X$-shaped cross at the origin in the spectrum of the linear operator obtained by linearizing about the periodic wave, in a frame of reference moving with the wave's speed. In order to determine which modes are the most unstable ({i.e.}, correspond to the spectral element with the largest real part), it is necessary to determine the whole spectral component connected to the cross. For surface water waves, numerical work~\cite{DO} showed the cross to be the central part of a figure-8 curve. This was proven in the works of Berti, Maspero \& Ventura~\cite{Berti2022Benjamin-FeirDepth,Berti2021FullWater}. For simpler models like the Nonlinear Schr\"odinger (NLS) equation, the presence of a figure 8 at the origin of the spectral plane was known for constant amplitude solutions. For non-constant solutions, this was shown in~\cite{DS, DU}. Recently, using the methods of \cite{Berti2022Benjamin-FeirDepth,Berti2021FullWater}, Maspero \& Radakovic~\cite{MasperoRadakovic2024} considered scalar equations with general dispersion and quadratic nonlinearity $u u_x$. They show that if a modulational instability is present, it is part of a figure 8 at the origin. Their work is most closely related to ours, as we generalize their results, using similar techniques, to consider equations with general monomial nonlinearity.

For the modulational instability, the use of this method provides explicit expressions for the spectrum near the origin as real analytic functions of the Floquet exponent and the amplitude of the periodic waves. 

The multiplier $\jmath(k)$ in \eqref{e:M} is assumed to satisfy the following hypotheses. 

\phantomsection\label{h:j}

\vspace*{0.1in}

\begin{enumerate}
    \item[(H1)] The dispersion function $\jmath(k)$ is real valued, even, and, without loss of generality, $\jmath(0)=1$.
    \item[(H2)] There exists constants $C_1,C_2 > 0$ and $\sigma\geq -1$ such that for $ k \gg 1$,
    \[
    C_1 k^\sigma \leq \jmath(k) \leq C_2 k^\sigma.
    \]

\item[(H3)] For each fixed $n\in \mathbb{N}, n\neq 1$, we have 
\[
\jmath(k n)-\jmath(k)\neq0, ~~~ k>0. 
\]
\end{enumerate}

\vspace*{0.1in}

\no The hypotheses (H1)-(H3) are crucial for proving the existence of small-amplitude periodic traveling waves of \eqref{e:gkdv}; see Section \ref{s:2}. It should be noted that $\jmath(0)=1$ in Hypothesis~H1 can always be achieved using a scaling transformation on the independent and dependent variables, perhaps altering other coefficients in the equation if it is not scale invariant. Hypothesis (H3) excludes the possibility of a resonance between the fundamental Fourier mode of the solution and its higher harmonics. For more details, see the discussion directly preceding the statement of Theorem~\ref{t:1}.

The problem is mathematically formulated as follows. We consider periodic traveling waves of the gKdV equation \eqref{e:gkdv} with amplitude $0<a\ll1$. A standard linearization leads to a spectral problem $\lambda \vartheta = \mathcal{Q} \vartheta$, where  the linear operator $\mathcal{Q}$ has periodic coefficients determined by the traveling wave and its derivatives. Thus it depends on $a$. Using a Floquet decomposition~\cite{DK, KP}, this leads to a one-parameter family of spectral problems  $\lambda \varphi = \mathcal{Q}^{(\mu)} \varphi$, parameterized by the Floquet parameter $\mu$. Here $\vartheta(x) = e^{i\mu x}\varphi(x)$, where $\varphi(x)$ is a $2\pi$-periodic function. If spectral elements $\lambda$ of $\vartheta^{(\mu)}$ exist for any $\mu$ that have a positive real part, the perturbation  of the traveling wave grows exponentially in time, establishing spectral instability.

We state our main results in colloquial language here. Precise technical statements are found in Section~\ref{sec:results}. All results are for waves of sufficiently small amplitude: using an expansion in terms of the amplitude (a so-called Stokes expansion), only the leading correction to the wave profile satisfying the linearized equation is taking into account. 

\begin{itemize}

\item For a given equation \eqref{e:gkdv}, i.e., for a given $N$, $\alpha$ and $\jmath(k)$, we construct its Whitham-Benjamin coefficient $\Delta(k)$. If $\Delta(k)>0$ then traveling waves of period $2\pi/k$ of \eqref{e:gkdv} are unstable with respect to the Benjamin-Feir or modulational instability. For $N=2$, our results agree with those obtained by Maspero \& Radakovic \cite{MasperoRadakovic2024}. 

\item If $N$ is even, the modulational stability criterion is quite involved, see Section~\ref{sec:results}. On the other hand, if $N$ is odd, the modulational instability criterion can be written down succinctly: $2\pi/k$-periodic traveling wave solutions of \eqref{e:gkdv} are unstable with respect to the Benjamin-Feir or modulational instability if 

\[
\alpha\left(k j'(k)+\dfrac{1}{2}k^2\jmath^{\prime\prime}(k)\right)<0.
\] 

\no It is noteworthy that for odd $N$, this criterion does not depend on $N$. 

\item If traveling wave solutions of \eqref{e:gkdv} are unstable with respect to the Benjamin-Feir or modulational instability, then the unstable spectrum of the linear operator $\mathcal{Q}$ in a neighborhood of the origin lies on a figure-eight curve centered at the origin whose size (height and width) decreases exponentially with $N$. 

\end{itemize}

A comparison of the analytical result for the unstable spectrum near the origin with numerical computations using the FFHM method \cite{DK} linearizing about a high-order correction Stokes expansion shows excellent agreement for waves of sufficiently small amplitude, see Figs.~\ref{fig:mkdvcomp} and \ref{fig:whithamcomp}. 
In Figs. \ref{fig:mkdvcomp} and \ref{fig:whithamcomp} ($N=3$), a gap is visible in the spectrum on the imaginary axis. This is a consequence of restricting the interval for the Floquet exponent $\mu$ so as to get better numerical coverage of the figure eight.

\begin{figure}[tb]
\begin{center}
\makebox[\textwidth][c]{\def\svgwidth{5in}\hspace*{1.0in}
\begingroup%
  \makeatletter%
  \providecommand\color[2][]{%
    \errmessage{(Inkscape) Color is used for the text in Inkscape, but the package 'color.sty' is not loaded}%
    \renewcommand\color[2][]{}%
  }%
  \providecommand\transparent[1]{%
    \errmessage{(Inkscape) Transparency is used (non-zero) for the text in Inkscape, but the package 'transparent.sty' is not loaded}%
    \renewcommand\transparent[1]{}%
  }%
  \providecommand\rotatebox[2]{#2}%
  \newcommand*\fsize{\dimexpr\f@size pt\relax}%
  \newcommand*\lineheight[1]{\fontsize{\fsize}{#1\fsize}\selectfont}%
  \ifx\svgwidth\undefined%
    \setlength{\unitlength}{2992.85087045bp}%
    \ifx\svgscale\undefined%
      \relax%
    \else%
      \setlength{\unitlength}{\unitlength * \real{\svgscale}}%
    \fi%
  \else%
    \setlength{\unitlength}{\svgwidth}%
  \fi%
  \global\let\svgwidth\undefined%
  \global\let\svgscale\undefined%
  \makeatother%
  \begin{picture}(1,0.50246693)%
    \lineheight{1}%
    \setlength\tabcolsep{0pt}%
    \put(0,0){\includegraphics[width=\unitlength,page=1]{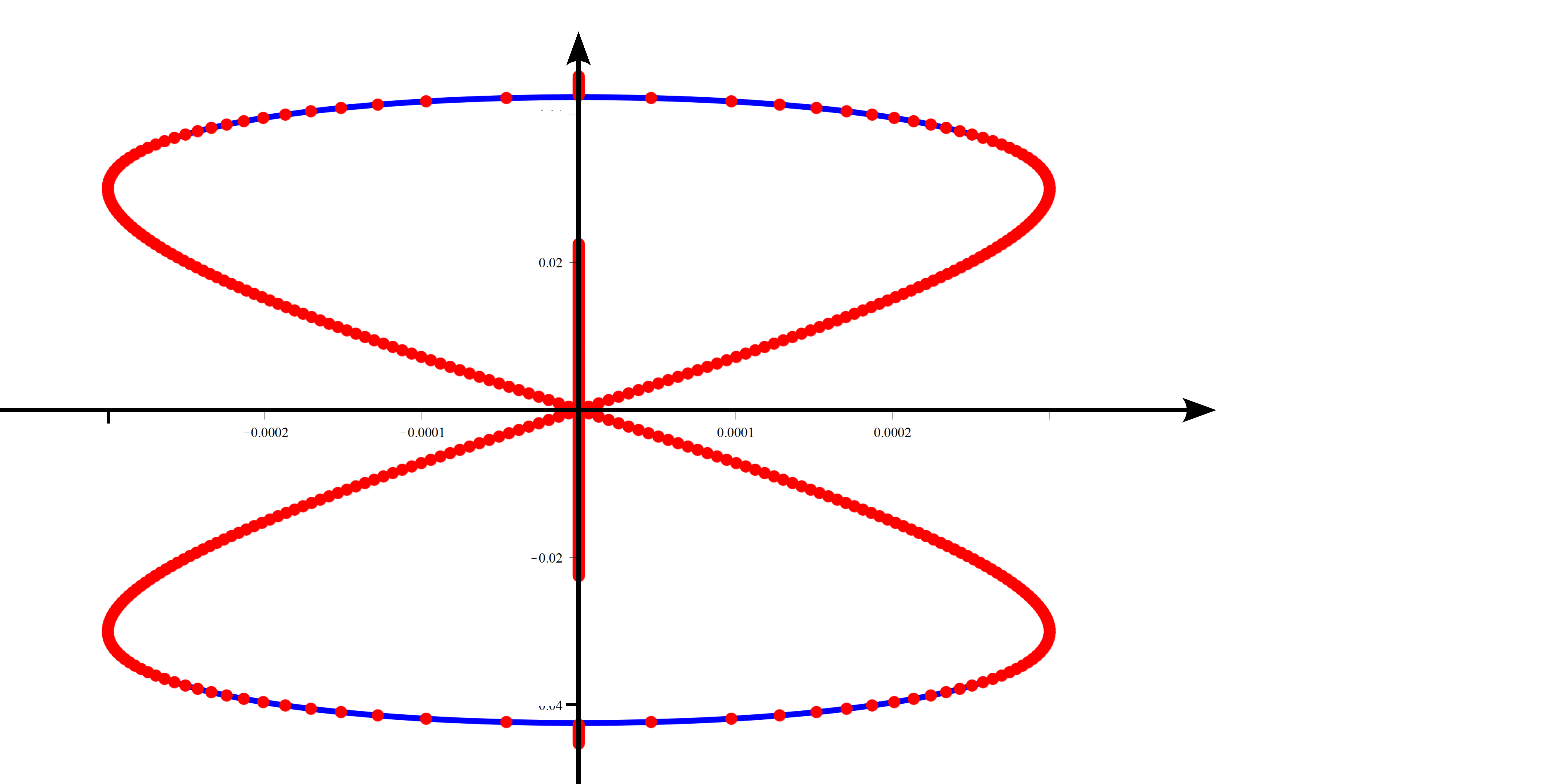}}%
    \put(0.65022136,0.21135526){\makebox(0,0)[lt]{\lineheight{1.25}\smash{\begin{tabular}[t]{l}$0.0003$\end{tabular}}}}%
    \put(0.38198449,0.47200373){\makebox(0,0)[lt]{\lineheight{1.25}\smash{\begin{tabular}[t]{l}Im$\lambda$\end{tabular}}}}%
    \put(0.73783249,0.25649014){\makebox(0,0)[lt]{\lineheight{1.25}\smash{\begin{tabular}[t]{l}Re$\lambda$\end{tabular}}}}%
    \put(0,0){\includegraphics[width=\unitlength,page=2]{mkdv.pdf}}%
    \put(0.30289367,0.41382997){\makebox(0,0)[lt]{\lineheight{1.25}\smash{\begin{tabular}[t]{l}$0.04$\end{tabular}}}}%
    \put(0,0){\includegraphics[width=\unitlength,page=3]{mkdv.pdf}}%
    \put(0.27773228,0.05046402){\makebox(0,0)[lt]{\lineheight{1.25}\smash{\begin{tabular}[t]{l}$-0.04$\end{tabular}}}}%
    \put(0,0){\includegraphics[width=\unitlength,page=4]{mkdv.pdf}}%
    \put(0.05011942,0.21050166){\makebox(0,0)[lt]{\lineheight{1.25}\smash{\begin{tabular}[t]{l}$-0.0003$\end{tabular}}}}%
  \end{picture}%
\endgroup%
}
\caption{Comparison of the analytical result (blue) for the unstable spectrum near the origin for the modified KdV equation with $\alpha=-1$ (thus $N=3$, $\jmath(k)=1+k^2$), with numerical results (red) using a 9-th order Stokes expansion, with $a=0.02$, $\rho=1.5$. Hill's method \cite{DK} uses $5$ Fourier modes, $11\times 11$ matrices, and $201$ equally spaced Floquet exponents $\mu\in [-0.01, 0.01]$.}
\label{fig:mkdvcomp}
\end{center}
\end{figure}
\begin{figure}[tb]
\centering

\begin{tabular}{cc}
\def\svgwidth{3.5in}
\hspace*{0.0in}
\begingroup%
  \makeatletter%
  \providecommand\color[2][]{%
    \errmessage{(Inkscape) Color is used for the text in Inkscape, but the package 'color.sty' is not loaded}%
    \renewcommand\color[2][]{}%
  }%
  \providecommand\transparent[1]{%
    \errmessage{(Inkscape) Transparency is used (non-zero) for the text in Inkscape, but the package 'transparent.sty' is not loaded}%
    \renewcommand\transparent[1]{}%
  }%
  \providecommand\rotatebox[2]{#2}%
  \newcommand*\fsize{\dimexpr\f@size pt\relax}%
  \newcommand*\lineheight[1]{\fontsize{\fsize}{#1\fsize}\selectfont}%
  \ifx\svgwidth\undefined%
    \setlength{\unitlength}{2996.60075511bp}%
    \ifx\svgscale\undefined%
      \relax%
    \else%
      \setlength{\unitlength}{\unitlength * \real{\svgscale}}%
    \fi%
  \else%
    \setlength{\unitlength}{\svgwidth}%
  \fi%
  \global\let\svgwidth\undefined%
  \global\let\svgscale\undefined%
  \makeatother%
  \begin{picture}(1,0.57449588)%
    \lineheight{1}%
    \setlength\tabcolsep{0pt}%
    \put(0,0){\includegraphics[width=\unitlength,page=1]{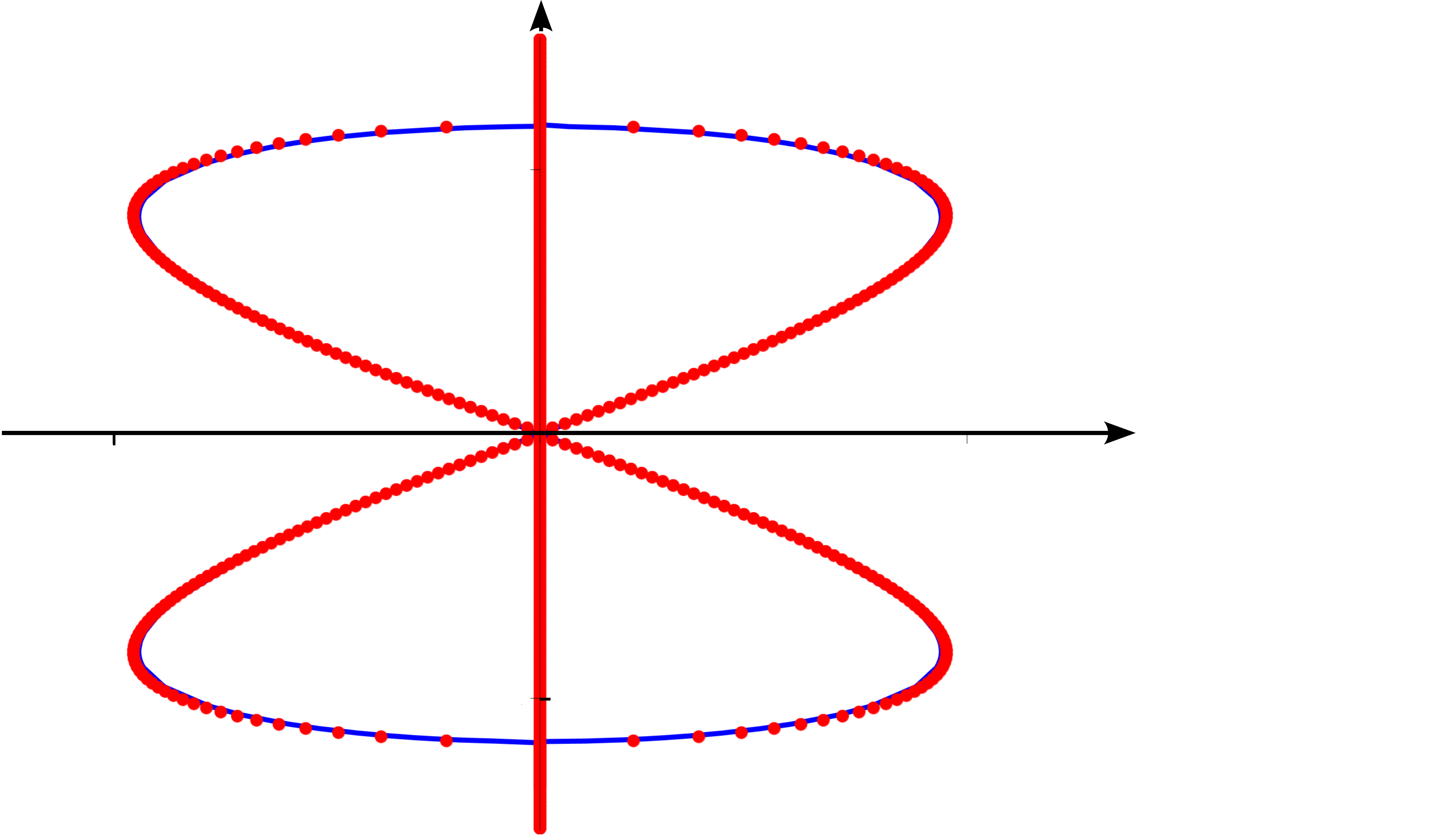}}%
    \put(0.38095409,0.09183672){\makebox(0,0)[lt]{\lineheight{1.25}\smash{\begin{tabular}[t]{l}$-0.01$\end{tabular}}}}%
    \put(0.38183289,0.4489347){\makebox(0,0)[lt]{\lineheight{1.25}\smash{\begin{tabular}[t]{l}$0.01$\end{tabular}}}}%
    \put(0.0613858,0.23785288){\makebox(0,0)[lt]{\lineheight{1.25}\smash{\begin{tabular}[t]{l}$-0.0002$\end{tabular}}}}%
    \put(0.63314765,0.23681615){\makebox(0,0)[lt]{\lineheight{1.25}\smash{\begin{tabular}[t]{l}$0.0002$\end{tabular}}}}%
    \put(0.38168242,0.53859695){\makebox(0,0)[lt]{\lineheight{1.25}\smash{\begin{tabular}[t]{l}Im$\lambda$\end{tabular}}}}%
    \put(0.73816056,0.29419892){\makebox(0,0)[lt]{\lineheight{1.25}\smash{\begin{tabular}[t]{l}Re$\lambda$\end{tabular}}}}%
    \put(0,0){\includegraphics[width=\unitlength,page=2]{Whitham2.pdf}}%
  \end{picture}%
\endgroup%
 &
\hspace*{-0.5in}\def\svgwidth{3.5in}
\begingroup%
  \makeatletter%
  \providecommand\color[2][]{%
    \errmessage{(Inkscape) Color is used for the text in Inkscape, but the package 'color.sty' is not loaded}%
    \renewcommand\color[2][]{}%
  }%
  \providecommand\transparent[1]{%
    \errmessage{(Inkscape) Transparency is used (non-zero) for the text in Inkscape, but the package 'transparent.sty' is not loaded}%
    \renewcommand\transparent[1]{}%
  }%
  \providecommand\rotatebox[2]{#2}%
  \newcommand*\fsize{\dimexpr\f@size pt\relax}%
  \newcommand*\lineheight[1]{\fontsize{\fsize}{#1\fsize}\selectfont}%
  \ifx\svgwidth\undefined%
    \setlength{\unitlength}{2782.85092812bp}%
    \ifx\svgscale\undefined%
      \relax%
    \else%
      \setlength{\unitlength}{\unitlength * \real{\svgscale}}%
    \fi%
  \else%
    \setlength{\unitlength}{\svgwidth}%
  \fi%
  \global\let\svgwidth\undefined%
  \global\let\svgscale\undefined%
  \makeatother%
  \begin{picture}(1,0.59776367)%
    \lineheight{1}%
    \setlength\tabcolsep{0pt}%
    \put(0,0){\includegraphics[width=\unitlength,page=1]{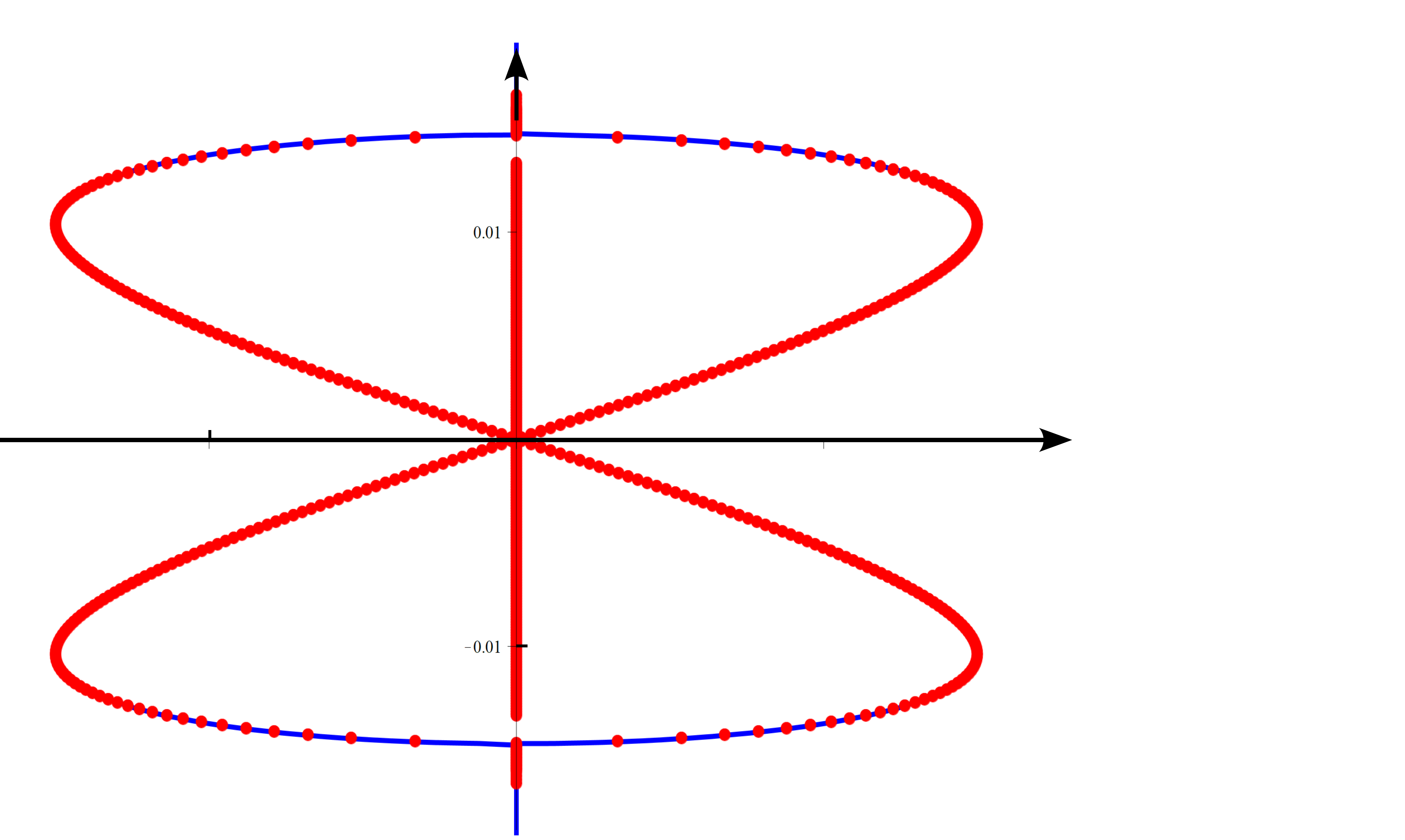}}%
    \put(0.37860245,0.12972361){\makebox(0,0)[lt]{\lineheight{1.25}\smash{\begin{tabular}[t]{l}$-0.01$\end{tabular}}}}%
    \put(0.38084784,0.42637444){\makebox(0,0)[lt]{\lineheight{1.25}\smash{\begin{tabular}[t]{l}$0.01$\end{tabular}}}}%
    \put(0.06720662,0.29621642){\makebox(0,0)[lt]{\lineheight{1.25}\smash{\begin{tabular}[t]{l}$-0.0002$\end{tabular}}}}%
    \put(0.56594966,0.24470091){\makebox(0,0)[lt]{\lineheight{1.25}\smash{\begin{tabular}[t]{l}$0.0002$\end{tabular}}}}%
    \put(0.37956577,0.56500166){\makebox(0,0)[lt]{\lineheight{1.25}\smash{\begin{tabular}[t]{l}Im$\lambda$\end{tabular}}}}%
    \put(0.71804875,0.29610319){\makebox(0,0)[lt]{\lineheight{1.25}\smash{\begin{tabular}[t]{l}Re$\lambda$\end{tabular}}}}%
    \put(0,0){\includegraphics[width=\unitlength,page=2]{Whitham3.pdf}}%
  \end{picture}%
\endgroup%
 \\
\hspace*{-0.9in}$N=2$ & \hspace*{-1.4in}$N=3$ \\[10pt]

\def\svgwidth{3.5in}
\hspace*{0.0in}
\begingroup%
  \makeatletter%
  \providecommand\color[2][]{%
    \errmessage{(Inkscape) Color is used for the text in Inkscape, but the package 'color.sty' is not loaded}%
    \renewcommand\color[2][]{}%
  }%
  \providecommand\transparent[1]{%
    \errmessage{(Inkscape) Transparency is used (non-zero) for the text in Inkscape, but the package 'transparent.sty' is not loaded}%
    \renewcommand\transparent[1]{}%
  }%
  \providecommand\rotatebox[2]{#2}%
  \newcommand*\fsize{\dimexpr\f@size pt\relax}%
  \newcommand*\lineheight[1]{\fontsize{\fsize}{#1\fsize}\selectfont}%
  \ifx\svgwidth\undefined%
    \setlength{\unitlength}{2441.28546767bp}%
    \ifx\svgscale\undefined%
      \relax%
    \else%
      \setlength{\unitlength}{\unitlength * \real{\svgscale}}%
    \fi%
  \else%
    \setlength{\unitlength}{\svgwidth}%
  \fi%
  \global\let\svgwidth\undefined%
  \global\let\svgscale\undefined%
  \makeatother%
  \begin{picture}(1,0.58776834)%
    \lineheight{1}%
    \setlength\tabcolsep{0pt}%
    \put(0,0){\includegraphics[width=\unitlength,page=1]{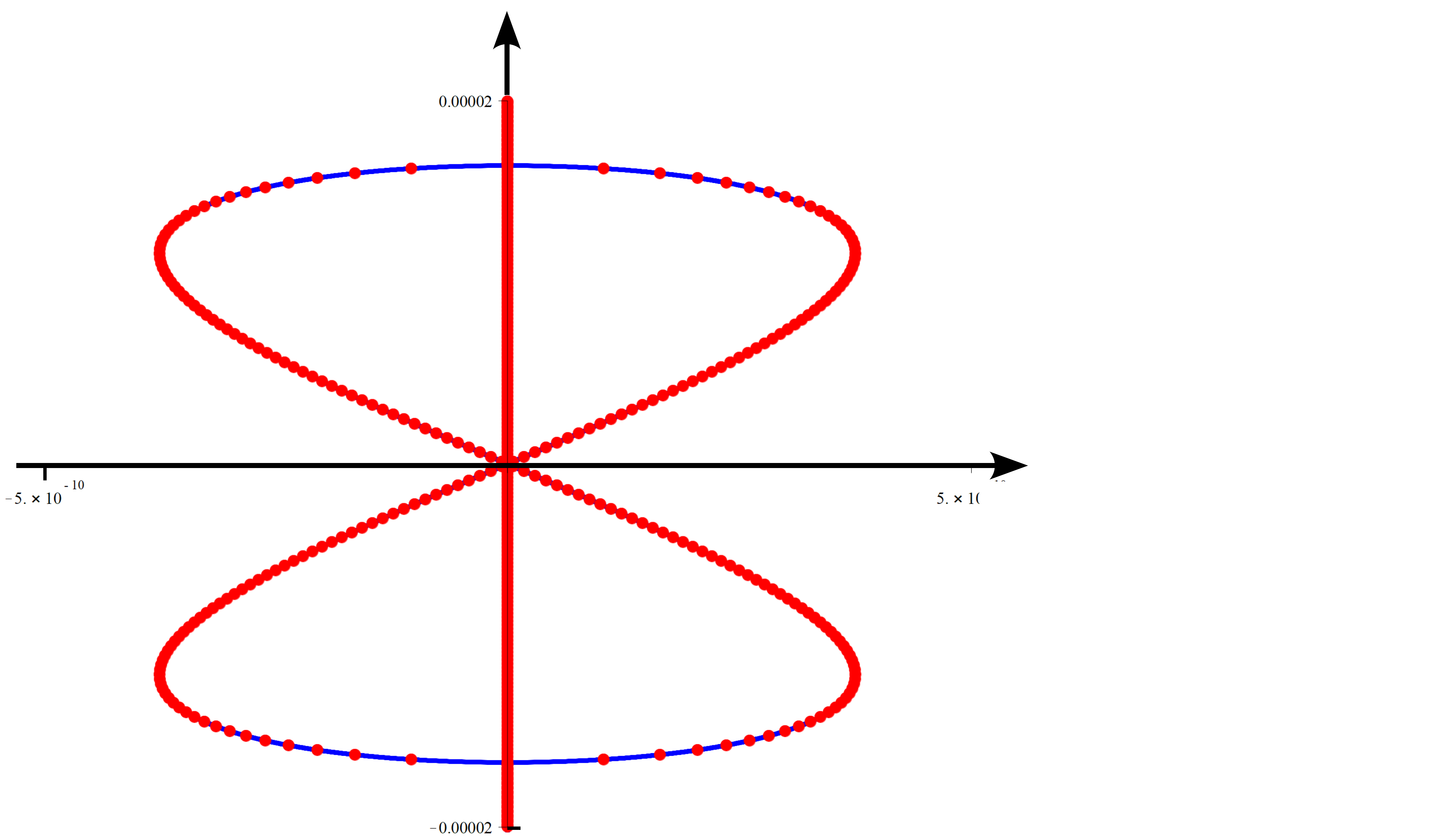}}%
    \put(0.36406763,0.0067669){\makebox(0,0)[lt]{\lineheight{1.25}\smash{\begin{tabular}[t]{l}$-0.00002$\end{tabular}}}}%
    \put(0.37001671,0.50870853){\makebox(0,0)[lt]{\lineheight{1.25}\smash{\begin{tabular}[t]{l}$0.00002$\end{tabular}}}}%
    \put(0.36558611,0.55042252){\makebox(0,0)[lt]{\lineheight{1.25}\smash{\begin{tabular}[t]{l}Im$\lambda$\end{tabular}}}}%
    \put(0.67860041,0.28268237){\makebox(0,0)[lt]{\lineheight{1.25}\smash{\begin{tabular}[t]{l}Re$\lambda$\end{tabular}}}}%
    \put(0,0){\includegraphics[width=\unitlength,page=2]{Whitham4.pdf}}%
    \put(0.6092658,0.22163434){\makebox(0,0)[lt]{\lineheight{1.25}\smash{\begin{tabular}[t]{l}$5 E-10$\end{tabular}}}}%
    \put(0,0){\includegraphics[width=\unitlength,page=3]{Whitham4.pdf}}%
    \put(0.00869429,0.22031568){\makebox(0,0)[lt]{\lineheight{1.25}\smash{\begin{tabular}[t]{l}$-5 E-10$\end{tabular}}}}%
  \end{picture}%
\endgroup%
 &
\def\svgwidth{3.5in}
\hspace*{-0.5in}
\begingroup%
  \makeatletter%
  \providecommand\color[2][]{%
    \errmessage{(Inkscape) Color is used for the text in Inkscape, but the package 'color.sty' is not loaded}%
    \renewcommand\color[2][]{}%
  }%
  \providecommand\transparent[1]{%
    \errmessage{(Inkscape) Transparency is used (non-zero) for the text in Inkscape, but the package 'transparent.sty' is not loaded}%
    \renewcommand\transparent[1]{}%
  }%
  \providecommand\rotatebox[2]{#2}%
  \newcommand*\fsize{\dimexpr\f@size pt\relax}%
  \newcommand*\lineheight[1]{\fontsize{\fsize}{#1\fsize}\selectfont}%
  \ifx\svgwidth\undefined%
    \setlength{\unitlength}{2596.36328509bp}%
    \ifx\svgscale\undefined%
      \relax%
    \else%
      \setlength{\unitlength}{\unitlength * \real{\svgscale}}%
    \fi%
  \else%
    \setlength{\unitlength}{\svgwidth}%
  \fi%
  \global\let\svgwidth\undefined%
  \global\let\svgscale\undefined%
  \makeatother%
  \begin{picture}(1,0.64541074)%
    \lineheight{1}%
    \setlength\tabcolsep{0pt}%
    \put(0,0){\includegraphics[width=\unitlength,page=1]{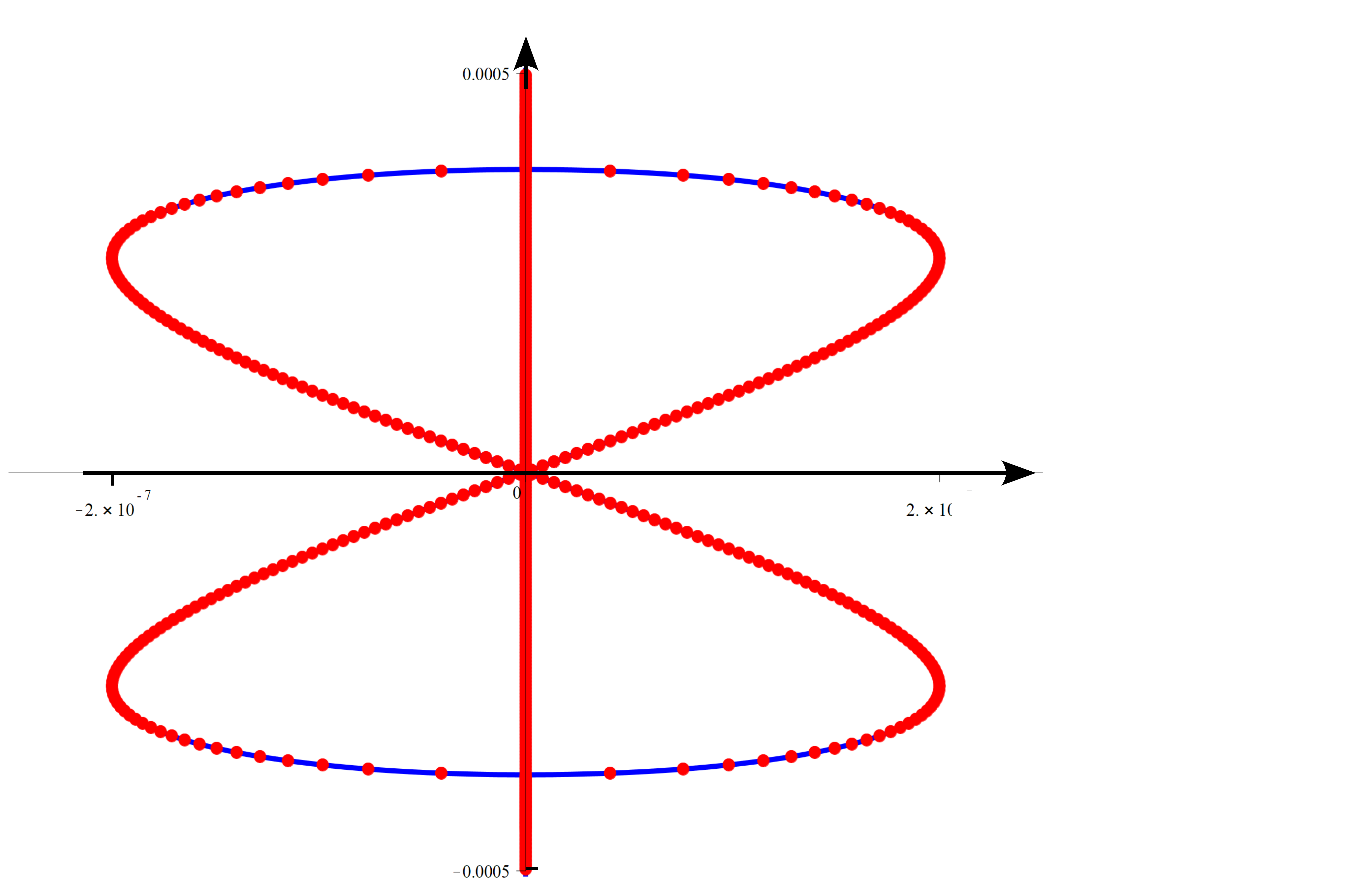}}%
    \put(0.39963423,0.01511679){\makebox(0,0)[lt]{\lineheight{1.25}\smash{\begin{tabular}[t]{l}$-0.0005$\end{tabular}}}}%
    \put(0.39515787,0.5636108){\makebox(0,0)[lt]{\lineheight{1.25}\smash{\begin{tabular}[t]{l}$0.0005$\end{tabular}}}}%
    \put(0.39340873,0.61029553){\makebox(0,0)[lt]{\lineheight{1.25}\smash{\begin{tabular}[t]{l}Im$\lambda$\end{tabular}}}}%
    \put(0.69779716,0.32168022){\makebox(0,0)[lt]{\lineheight{1.25}\smash{\begin{tabular}[t]{l}Re$\lambda$\end{tabular}}}}%
    \put(0,0){\includegraphics[width=\unitlength,page=2]{Whitham5.pdf}}%
    \put(0.05293091,0.2521678){\makebox(0,0)[lt]{\lineheight{1.25}\smash{\begin{tabular}[t]{l}$-2 E-7$\end{tabular}}}}%
    \put(0.64146551,0.25296796){\makebox(0,0)[lt]{\lineheight{1.25}\smash{\begin{tabular}[t]{l}$2 E-7$\end{tabular}}}}%
  \end{picture}%
\endgroup%
 \\
\hspace*{-0.9in}$N=4$ & \hspace*{-1.4in}$N=5$
\end{tabular}

\caption{Comparison of the analytical result (blue) for the unstable spectrum near the origin for 4 different Whitham-like equations with increasing degree of nonlinearity $N$, with $\alpha=1$, $\jmath(k)=\sqrt{\tanh k/k}$, with numerical results (red) using a 9-th order Stokes expansion, with $a=0.02$, $\rho=1.5$. Hill's method \cite{DK} is used with $5$ Fourier modes, $11\times 11$ matrices, $210$ equally spaced Floquet exponents. For $N=2$, $\mu\in [-1/2, 1/2]$, for $N=3$, $\mu\in [-0.06, 0.06]$, for $N=4$, $\mu\in [-0.0001,0.0001]$, for $N=5$, $\mu\in [-0.002, 0.002]$. The relative sizes of these Whitham figure-eight curves is noticeable, see also Fig.~\ref{fig:combo}.}
\label{fig:whithamcomp}
\end{figure}

\section{Small amplitude periodic traveling waves}\label{s:2}

We are interested in the stability of \emph{periodic traveling wave} solutions of \eqref{e:gkdv}. In this section, we discuss such solutions as needed to proceed in what follows. Throughout, we assume that the symbol $\jmath(\cdot)$ associated with the Fourier multiplier $\mathcal{J}$ satisfies hypotheses~(H1)-(H3). 

 A traveling wave solution of \eqref{e:gkdv} is given by $ u(x, t) = U(x - ct) $, where $ c > 0 $ is the wave speed. Substituting this into \eqref{e:gkdv} gives  

\begin{equation}\label{e:qq}
    \mathcal{J} U - c U + \alpha U^N+b = 0,
\end{equation} 

\no where $b$ is an arbitrary constant. We limit ourselves to the case $b=0$, which corresponds to periodic traveling waves bifurcating from the zero solution. 
We seek a solution of the form $ U(x) = \eta(z) $ with $ z = \rho x $, where $ \rho > 0 $ plays the role of a scaling, chosen so that $\eta(z)$ is $ 2\pi $-periodic and satisfies  
\begin{equation}\label{e:q}
    \mathcal{J}_\rho \eta - c \eta + \alpha \eta^N = 0,
\end{equation}

\no where $\J_\rho$ is a Fourier multiplier satisfying

\[
\J_\rho e^{inz} = \jmath(\rho  n)e^{inz},~~~~n\in\mathbb{Z},
\]

\no {i.e.}, if $\J$ is a polynomial in $\partial_x$, $\J_\rho$ has every $\partial_x$ replaced by $\rho \partial_z$. Since $\jmath(k)$ is real valued and even, \eqref{e:q} is invariant under translation $z\mapsto z+z_0$ and reflection $z\mapsto -z$, allowing us to assume that $\eta$ is even. Furthermore, since $\jmath$ is not necessarily a homogeneous function (see, for instance, the case of the Whitham equation), \eqref{e:q} may lack scaling invariance\footnote{The Whitham equation in physical variables is scale invariant, as any scaling affects the physical constants involved. If, as is commonly done, the physical constants depth and acceleration of gravity are equated to 1, the scaling symmetry is broken.}, and we cannot initially assume $\rho=1$.

\newcommand{\vs}{\vspace*{0.1in}}

\vs

{\bf Example.} {\em For the modified KdV equation with $N=3$, $\jmath(k)=1+k^2$, 

\[
u_t+(u-u_{xx}+\alpha u^3)_x=0,
\]

\no the traveling wave solutions bifurcating from the zero solution satisfy 

\[
-c \eta+\eta-\rho^2 \eta''+\alpha \eta^3=0, 
\]

\no where $\eta'$ denotes the derivative of $\eta$ with respect to $z$.

}

\vs

{\bf Example.} {\em For the Whitham equation $u_t+\mathcal{J} u_x+ 2 u u_x=0$ with $N=2$ and 

\[
\widehat{\mathcal{J}f}(k) = \sqrt{\frac{\tanh(k)}{k}}\hat{f}(k), 
\]

\no the traveling wave solutions bifurcating from the zero solution satisfy 

\[
-c \eta+{\mathcal J}_\rho \eta+ \eta^2=0.
\]
}

\vs

First, we establish the smoothness of solutions of \eqref{e:q}. Let $\mathbb{T}$ denote the one-dimensional torus. For $s\ge 0$, 
$H^{s}(\mathbb{T})$ is the standard $L^2$–based Sobolev space on $\mathbb{T}$, where 
$L^2(\mathbb{T})$ is the space of $2\pi$-periodic, square-integrable functions over $\mathbb{R}$. The subspace of even functions in $H^s(\mathbb{T})$ is

\[
H^{s}_{\mathrm{even}}(\mathbb{T}) = \{ f \in H^s(\mathbb{T}) : f(x) = f(-x) \text{ for all } x \in \mathbb{T} \}.
\]

\no We also define

\[
H^{\infty}_{\mathrm{even}}(\mathbb{T}) = \bigcap_{m \ge 0} H^m_{\mathrm{even}}(\mathbb{T}),
\]

\no the space of even functions that are smooth, that is, possess derivatives of all orders in $L^2(\mathbb{T})$.

Finally, $L^\infty(\mathbb{T})$ denotes the space of essentially bounded functions on $\mathbb{T}$, with the norm

\[
\|f\|_{L^\infty} = \operatorname{ess\,sup}_{x \in \mathbb{T}} |f(x)|.
\]

\begin{lemma}[Regularity]\label{l:1}
    If $\eta\in H^s_{\mathrm{even}}(\mathbb{T})$, with $s=\max(1,1+\sigma)$ and $\sigma$ from Hypothesis (H2) satisfies \eqref{e:q} for some $c\in\R$, $\rho>0$, and if $c-N|\alpha|\|\eta^{N-1}\|_{L^\infty}\geq\epsilon>0$ for some $\epsilon$, then $\eta\in H^\infty_{\rm even}(\T)$.
\end{lemma}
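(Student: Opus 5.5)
The plan is a bootstrap in the Sobolev scale. At each step we use \eqref{e:q} to show that if $\eta\in H^m_{\rm even}(\T)$ for some $m\ge s$, then $\eta\in H^{m+\delta}_{\rm even}(\T)$ for a fixed gain $\delta>0$ depending only on $\sigma$. Evenness is preserved automatically, since $\J_\rho$ commutes with $z\mapsto -z$ by (H1), and so do $\eta\mapsto\eta^N$ and the inverse constructed below. Two standard facts will be used throughout. First, $H^m(\T)$ is a Banach algebra for $m>1/2$, hence for $m\ge s\ge 1$. Second, by (H2) the operator $\J_\rho$ maps $H^r(\T)$ boundedly into $H^{r-\sigma}(\T)$ for every $r$; when $\sigma>0$ it is also invertible modulo the finitely many modes $n$ with $\jmath(\rho n)=0$, with $\|\J_\rho f\|_{H^{r-\sigma}}\gtrsim\|f\|_{H^r}$ up to a finite-rank error.

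\emph{Case $\sigma>0$ (elliptic gain).} Write \eqref{e:q} as $\J_\rho\eta=c\eta-\alpha\eta^N$. If $\eta\in H^m$, the right-hand side lies in $H^m$ by the algebra property. The lower bound from (H2) on the high Fourier modes then gives $\eta\in H^{m+\sigma}$, since the finitely many low modes do not affect regularity. Iterating yields $\eta\in H^\infty$. In this case the hypothesis $c-N|\alpha|\|\eta^{N-1}\|_{L^\infty}\ge\epsilon$ is not needed.

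\emph{Case $-1\le\sigma\le 0$ (smoothing or order-zero dispersion).} Here the coercivity hypothesis is essential, and I would follow the Whitham-equation approach. Differentiate \eqref{e:q}, which is legitimate since $s\ge1$, to obtain
\[
\bigl(c-\alpha N\eta^{N-1}\bigr)\eta'=\J_\rho\eta'.
\]
Set $g=c-\alpha N\eta^{N-1}$. The hypothesis gives $g\ge\epsilon>0$ pointwise. If $\eta\in H^m$, then $g\in H^m$ (algebra), and $1/g\in H^m$ as well: for this I would use a composition (Moser-type) estimate for the smooth function $y\mapsto 1/y$ on $[\epsilon,\infty)$. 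Hence $\eta'=g^{-1}\J_\rho\eta'$. Since $\eta'\in H^{m-1}$, the mapping property gives $\J_\rho\eta'\in H^{m-1-\sigma}$. Multiplying by $g^{-1}\in H^m$ keeps us in $H^{\min(m,\,m-1-\sigma)}=H^{m-1-\sigma}$, because $-\sigma\le1$ and $m-1-\sigma\ge0$. So $\eta'\in H^{m-1-\sigma}$, i.e.\ $\eta\in H^{m-\sigma}$, a gain of $-\sigma$ derivatives.

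An equivalent route, which may be cleaner to write down, is to observe that $F(y)=cy-\alpha y^N$ has $F'(y)\ge\epsilon$ on the range of $\eta$. Then $\eta=F^{-1}(\J_\rho\eta)$ locally, and the regularity of $\J_\rho\eta$ transfers to $\eta$ through the smooth local inverse.

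\emph{Main obstacle.} The delicate point is the borderline $\sigma=0$. There (H2) only says $\jmath$ is bounded above and below, so the argument above yields no gain at all. I would try to handle it by viewing $\J_\rho-c$ as a perturbation: the linearized equation reads $(\J_\rho-g)\eta'=0$, and the coercivity $g\ge\epsilon$ should make $\J_\rho-g$ invertible on high frequencies with a commutator gain. If this fails, I would accept the result for $\sigma\ne0$ and treat $\sigma=0$ with an additional smoothness assumption on $\jmath$ (for instance, that $\jmath(k)$ tends to a limit as $k\to\infty$ with a decaying remainder).

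The remaining technical check is that the composition estimate for $1/g$ and the product estimates are applied only at regularities where $H^m$ is an algebra. This holds since we start at $s\ge1$ and regularity only increases along the iteration.
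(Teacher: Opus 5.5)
\paragraph{The cases $\sigma\neq0$.} Your argument is correct here. In the smoothing range $-1\le\sigma<0$ it is the paper's proof: differentiate \eqref{e:q}, divide by $c-N\alpha\eta^{N-1}\ge\epsilon$, and bootstrap, gaining $-\sigma$ derivatives per step. You also make explicit the product and composition (Moser) estimates that the paper leaves unstated. For $\sigma>0$ your separate elliptic step is genuinely needed and is not in the paper. There the paper's identity $\eta'=\J_\rho\eta'/(c-N\alpha\eta^{N-1})$ loses $\sigma$ derivatives instead of gaining them. As you note, the coercivity hypothesis plays no role in that case.

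\paragraph{The gap: $\sigma=0$.} The statement includes $\sigma=0$, your proposal does not prove it, and the remedy you sketch would not work. At $\sigma=0$ the high-frequency symbol of $\J_\rho-g$ is $\jmath(\rho n)-g(z)$, with $\jmath(\rho n)\in[C_1,C_2]$. The bound $g\ge\epsilon$ does nothing to keep $g$ away from those values, so there is no ellipticity for a commutator argument to exploit.

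A concrete illustration: take $\jmath(k)=j_\infty+(1-j_\infty)e^{-k^2}$ with $0<j_\infty<1$. This satisfies (H1)--(H3) with $\sigma=0$, and \eqref{e:q} becomes
\[
(c-j_\infty)\eta-\alpha\eta^N=(1-j_\infty)\mathcal{R}\eta ,
\]
where $\mathcal{R}$ has symbol $e^{-\rho^2n^2}$. The right-hand side is therefore $C^\infty$ for every $\eta\in L^2(\T)$. Regularity of $\eta$ is then decided by whether $y\mapsto(c-j_\infty)y-\alpha y^N$ is locally invertible on the range of $\eta$, i.e.\ by $c-j_\infty-N\alpha\eta^{N-1}\neq0$. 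The hypothesis $c-N|\alpha|\|\eta^{N-1}\|_{L^\infty}\ge\epsilon$ does not guarantee this when $j_\infty\ge\epsilon$.

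For the same reason, your fallback assumption (that $\jmath$ tends to a limit with decaying remainder) is not enough by itself. It must be paired with the modified nondegeneracy condition $c-j_\infty-N|\alpha|\|\eta^{N-1}\|_{L^\infty}>0$. With that condition, and with $\jmath-j_\infty=\mathcal{O}(|k|^{-\delta})$, your $\sigma<0$ bootstrap applied to $(c-j_\infty-N\alpha\eta^{N-1})\eta'=(\J_\rho-j_\infty)\eta'$ goes through, gaining $\delta$ derivatives per step.

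The paper's own bootstrap also gives no gain at $\sigma=0$. This missing case is therefore a defect of the lemma as stated, not something you overlooked in the paper.
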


\begin{proof}
    Differentiating \eqref{e:q}, we find 
 \begin{equation}
     \eta^\prime=\dfrac{\J_\rho\eta^\prime}{c-N\alpha\eta^{N-1}}. 
 \end{equation}
 
\noindent There exists a positive constant $\nu_{\epsilon}$ such that $c-N|\alpha|\|\eta^{N-1}\|_{L^\infty}\geq\epsilon>0$ whenever $\|\eta\|_\infty<\nu_{\epsilon}$. Note that by Hypothesis (H2), $\eta\in H^\sigma_{\rm even}(\T)$. Using a bootstrap argument, it follows that solutions $\eta\in H^{\sigma}_{\rm even}(\mathbb{T})$
of \eqref{e:q} necessarily satisfy $\eta\in H^\infty_{\rm even}(\mathbb{T})$.  
\end{proof}

With $\sigma$ as in (H2), define the operator $\mathcal{G}:H_{\rm even}^{\sigma}(\mathbb{T}) \times \R^+ \to L^2(\mathbb{T})$ as

\begin{align*}
\mathcal{G}(\eta,c)= \J_\rho \eta-c\eta+\alpha \eta^N. 
\end{align*}

\noindent A standard Sobolev embedding argument shows that $\mathcal{G}$ is well defined.
A non-trivial $2\pi$-periodic solution $\eta$ of \eqref{e:q} in $H_{\rm even}^{\sigma}(\mathbb{T})$ with $c \in \R$ and $\rho>0$ satisfies
\begin{equation}\label{E:F1}
    \mathcal{G}(\eta,c)=0. 
\end{equation}
By virtue of Lemma \ref{l:1}, this provides a non-trivial {\em smooth} $2\pi$-periodic solution of \eqref{E:F1}. 
To study \eqref{E:F1}, note that $\mathcal{G}(0,c)=0$ for $c\in\mathbb{R}$ and $\rho>0$. Also 
\[
\partial_\eta \mathcal{G}(0,c)=\J_\rho-c,
\]
so that for $c\in\mathbb{R}$, $\rho>0$ and $n\in\mathbb{N}$, 
\[
\partial_\eta \mathcal{G}(0,c)\cos(nz) = \left(\jmath(\rho n)-c\right)\cos(nz).
\]
It follows that 

\begin{equation}\label{e:ker}
\ker\left(\partial_\eta \mathcal{G}(0,c_0)\right) = {\rm span}\left(\cos z \right),
\end{equation}

\noindent provided that

\begin{equation}\label{e:c0}
c=c_0=\jmath(\rho),
\end{equation}

\no and that $\rho>0$ is such that
\begin{equation}\label{e:k_cond}
\jmath(\rho n)-\jmath(\rho)\neq0~~~~{\rm for}~~n\in\mathbb{N},~~n\geq 2,
\end{equation}
to ensure the kernel is simple and thus Hypothesis (H3) is satisfied. Assuming $\rho>0$ meets the non-resonance requirement \eqref{e:k_cond}, a one-parameter family of non-trivial, even solutions $(\eta(a)(\cdot),c(a))$ of $\mathcal{G}(\eta,c)$ is obtained, bifurcating
from $\eta\equiv 0$ and $c=c_0$ and defined for $|a|\ll 1$. This existence proof is straightforward and follows those in \cite{EK_local, Johnson2013StabilityEquations} and is therefore not included here. A key characteristic of the solutions $\eta(a)(\cdot)$ and $c(a))$ is their analytic dependence on the parameter $a$ for $|a|\ll 1$. Using this, the following result further establishes analytic expansions for these solutions that are valid for $|a|\ll 1$.

\begin{theorem}[Existence]\label{t:1}
    Assume the symbol $\jmath(\cdot)$ in equation \eqref{e:M} meets hypotheses (H1)-(H2). For any fixed scaling $\rho>0$ satisfying hypothesis (H3) there exists a one-parameter family of solutions of \eqref{e:q}
 given by $u(x,t)=\eta(\rho(x-c t))$, where $c$ and $\eta$ depend on $a$ and $\rho$, for sufficiently small $|a|$, $a \in \R$; $\eta(a)(\cdot)$ is $2\pi$-periodic, 
 even and smooth in its argument, with $c(a)$ even in $a$, and $\eta(a)(\cdot)$ and $c(a)$ depend analytically on $a$ and $\rho$. Moreover, as $a\to 0$, 
\begin{align}\label{E:w_ansatz}
    \eta(a)(z)=a\cos z  + a^N \eta_N + \mathcal{O}(a^{N+1}),
\end{align}
and
\begin{align}\label{e:c}
    c(a)=c_0+a^\tau c_\tau+\mathcal{O}(a^{\tau+1}),~~c_0=\jmath(\rho). 
\end{align}

\noindent For even-power nonlinearity in \eqref{e:gkdv}, $N$ even, we have\footnote{The $L^2(\mathbb{T})$-inner product for $f,g \in L^2(\mathbb{T})$ is 
$\langle f, g \rangle := \frac{1}{\pi} \int_0^{2\pi} f(z) \overline{g(z)}\,dz
= \sum_{n \in \mathbb{Z}} \widehat{f}_n \overline{\widehat{g}_n}$, 
where $\widehat{f}_n$ denotes the $n$-th Fourier coefficient of $f$.} $\tau=2N-2$, $c_\tau= N \langle \eta_N \cos^{N}{z} ,1\rangle$, and 

\begin{equation}\label{e:expand1even} 
\eta_N= (\jmath(\rho)-\J_\rho)^{-1}\cos^N z.
\end{equation}

\noindent For odd-power nonlinearity in \eqref{e:gkdv}, $N$ odd, we have $\tau=N-1$, $c_\tau=\alpha \langle \cos^{N} z,\cos z\rangle$, and 

\begin{equation}\label{e:expand1odd} 
\eta_N=\alpha (\jmath(\rho)-\J_\rho)^{-1}(\cos^N z-\langle \cos^N z,\cos z \rangle \cos z).
\end{equation}

  
\end{theorem}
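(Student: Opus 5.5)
The plan is a Lyapunov--Schmidt reduction around the simple kernel \eqref{e:ker}, followed by matching powers of $a$. Let $P$ be the $L^2(\mathbb{T})$-orthogonal projection onto ${\rm span}(\cos z)$ and $Q=I-P$. I write
\[
\eta = a\cos z + w,\qquad \langle w,\cos z\rangle = 0,\qquad c = c_0+\gamma .
\]
With this decomposition, \eqref{E:F1} splits into a range equation and a bifurcation equation:
\begin{align*}
(\J_\rho - c_0)w - \gamma w + \alpha Q(a\cos z + w)^N &= 0,\\
-\gamma a + \alpha \langle (a\cos z+w)^N,\cos z\rangle &= 0 .
\end{align*}

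\textbf{Step 1: solve the range equation.} On even functions orthogonal to $\cos z$, the operator $\J_\rho - c_0$ acts diagonally on $\cos(nz)$, $n\ne1$, with symbol $\jmath(\rho n)-\jmath(\rho)$. This is nonzero by \eqref{e:k_cond}. For $n=0$ I also need $\jmath(\rho)\ne\jmath(0)$, which I take as part of the non-resonance assumption. For large $n$ the symbol is bounded below via (H2).
\begin{itemize}
\item Hence $\J_\rho-c_0$ is boundedly invertible from $QH^\sigma_{\rm even}$ to $QL^2$.
\item The analytic implicit function theorem then gives a unique small solution $w=w(a,\gamma)$, analytic in $(a,\gamma,\rho)$.
\item Since the nonlinearity is homogeneous of degree $N$, $w(a,\gamma)=\mathcal{O}(a^N)$.
\item Iterating the fixed-point form $w=(c_0-\J_\rho)^{-1}\bigl[\alpha Q(a\cos z+w)^N - \gamma w\bigr]$ gives
\[
w = a^N\alpha (\jmath(\rho)-\J_\rho)^{-1}Q\cos^N z + \mathcal{O}\bigl(a^{2N-1}+|\gamma| a^N\bigr).
\]
\end{itemize}
For odd $N$, $Q\cos^N z = \cos^N z - \langle\cos^N z,\cos z\rangle\cos z$, which yields \eqref{e:expand1odd}. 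For even $N$, $\cos^N z$ contains only even harmonics, so $Q\cos^N z=\cos^N z$; together with $\alpha=1$ this gives \eqref{e:expand1even}.

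\textbf{Step 2: the bifurcation equation.} Dividing the bifurcation equation by $a$ gives
\[
\gamma = \alpha a^{-1}\langle (a\cos z+w)^N,\cos z\rangle .
\]
The right-hand side is analytic in $(a,\gamma)$ and $\mathcal{O}(a^{N-1})$. The implicit function theorem in $\gamma$ therefore gives $\gamma=\gamma(a)$ analytic, and hence $c(a)$ and $\eta(a)$ analytic. Smoothness of $\eta(a)$ in $z$ follows from Lemma~\ref{l:1}, since $c\approx c_0>0$ and $\|\eta\|_{L^\infty}=\mathcal{O}(a)$.

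\textbf{Step 3: evenness of $c$ in $a$.} Use the symmetry $z\mapsto z+\pi$. It preserves \eqref{e:q}, evenness, and orthogonality to $\cos z$, and it sends $a\cos z\mapsto -a\cos z$. By the uniqueness part of the implicit function theorem, $\eta(-a)(z)=\eta(a)(z+\pi)$ and $c(-a)=c(a)$.

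\textbf{Step 4: leading coefficient of $c$.}
\begin{itemize}
\item \emph{Odd $N$.} The leading term is $\gamma = \alpha a^{N-1}\langle\cos^N z,\cos z\rangle + \mathcal{O}(a^{N})$. This gives $\tau=N-1$ and the stated $c_\tau$.
\item \emph{Even $N$.} Here $\langle\cos^N z,\cos z\rangle=0$, so I must go further. Expand $(a\cos z+w)^N = a^N\cos^N z + N a^{N-1}\cos^{N-1}z\, w+\dots$. The first correction comes from the cross term $N a^{N-1}\cos^{N-1}z\cdot a^N\eta_N$. This gives
\[
\gamma = N a^{2N-2}\langle \eta_N\cos^{N-1}z,\cos z\rangle + \text{h.o.t.} = N a^{2N-2}\langle\eta_N\cos^N z,1\rangle+\dots
\]
\end{itemize}

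The main obstacle is the bookkeeping in the even case. I must show that no term of order $a^m$ with $N\le m<2N-2$ survives in $\gamma$. The argument is as follows.
\begin{itemize}
\item From Step 1, the correction to $w$ beyond $a^N\eta_N$ is $\mathcal{O}(a^{2N-1}+|\gamma|a^N)$.
\item A priori, the Step 2 formula with $\langle\cos^N z,\cos z\rangle=0$ gives $\gamma=\mathcal{O}(a^{2N-2})$, so the $|\gamma|a^N$ contribution is of even higher order.
\item Quadratic and higher terms in $w$ contribute $\mathcal{O}(a^{N-2}\cdot a^{2N})$ to $\gamma$, which is of order at least $a^{2N-1}$ for $N\ge2$.
\end{itemize}
I would verify these order counts carefully, tracking the parity of harmonics. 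Parity also shows that all intermediate contributions are orthogonal to $\cos z$.
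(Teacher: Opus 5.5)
Your proposal is correct and follows essentially the paper's route: a Lyapunov--Schmidt reduction about ${\rm span}(\cos z)$ followed by expansion in $a$. Your reduced bifurcation equation plays the role of the paper's order-by-order Fredholm solvability conditions, and you make explicit what the paper only asserts, namely the $z\mapsto z+\pi$ symmetry giving $c(-a)=c(a)$, the a priori order bookkeeping, and the extra non-resonance condition $\jmath(\rho)\neq\jmath(0)$, which the paper uses implicitly through its $(\jmath(\rho)-1)^{-1}$ terms.

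One small slip in the even-$N$ bookkeeping: because of the division by $a$, the terms quadratic in $w$ contribute $\mathcal{O}(a^{N-3}\cdot a^{2N})=\mathcal{O}(a^{3N-3})$ to $\gamma$, not $\mathcal{O}(a^{N-2}\cdot a^{2N})$. This is still $\mathcal{O}(a^{2N-1})$ for $N\geq 2$, so your conclusion stands.
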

\begin{proof}
For any constant $\rho>0$, the existence of the solutions $(\eta(a)(\cdot),c(a))$ follows from a simple Lyapunov-Schmidt argument; for related arguments, see \cite{EK_local}, for instance. Consequently, the solutions $(\eta(a)(\cdot),c(a))$ are analytic in $a$ for $|a|\ll 1$. Thus they can be expanded as follows for a fixed $\rho>0$:
\begin{equation}\label{e:expand1} 
\left\{\begin{aligned}
\eta(a)(z)&=a\cos(z)+a^2\eta_2(\rho)(z)+\dots+a^N \eta_N+\mathcal{O}(a^{N+1}),\\
c(a)&=c_0+c_2(\rho)a^2+ \dots+c_\tau(\rho) a^\tau+\mathcal{O}(a^{\tau+1}).
\end{aligned}\right.
\end{equation}
The specifics of the Lyapunov-Schmidt argument lead to the relation $c(a)=c(-a)$.
For further analysis, we define the operator
\begin{equation}
    \mathcal{D}_\rho=\jmath(\rho)\mathcal{I}-\J_\rho,
\end{equation}

\no where $\mathcal{I}$ is the identity operator. The operator 
$\mathcal{D}_\rho$ is a Fourier multiplier that operates through

\begin{equation}\label{doperation}
    \mathcal{D}_\rho e^{inz}=(\jmath(\rho)-\jmath(\rho n))e^{inz},\quad\text{for}\quad n\in\Z\setminus\{1\}.
\end{equation}

\no The operator $\mathcal{D}_\rho$ is self adjoint (inherited from the operator $\J_\rho$) and invertible (from hypothesis (H3)). A hierarchy of compatibility criteria indexed by the order of the small parameter $a$ is obtained by substituting the expansions \eqref{e:expand1} into the profile equation \eqref{e:q}:

\begin{align}\label{e:solve2}
\partial_\eta \mathcal{G}(0,c;\rho)\eta= \alpha \sum_{m=0}^{N} \sum_{n=0}^{N-m} \binom{N}{m} \binom{N-m}{n} \frac{a^{N + m(N-1)} \eta_N^m}{2^{N-m}} \cos{(N-m-2n)z} .
\end{align}

\no The $\mathcal{O}(a^0)$ equation is trivially satisfied while the $\mathcal{O}(a^1)$ equation holds using \eqref{e:c0}. For any $j\leq N-1$, it is straightforward to see that $\eta_j\in\ker{(\mathcal{D}_\rho)}$ from which it follows that $c_{j-1}=0$. The Fredholm alternative implies that the right-hand side of \eqref{e:solve2} must be orthogonal to the kernel of the symmetric operator $\partial_\eta\mathcal{G}(0,c;\rho)$,  leading to the stated equation for $c_\tau=c_\tau(\rho)$. To satisfy this condition, it is essential to determine the lowest power of $a$ for which the right-hand side of \eqref{e:solve2} is proportional to $\cos(z)$. This occurs when

\[
N-m-2n=\pm1.
\] 

\no Consider $N-m-2n=1$. Then $N-1=m+2 n$.
The smallest values of $m$ and $n$ need to be checked. For $N$ odd and $(m,n)=(0,(N-1)/2)$, the smallest power of $a$ ensuring that the right-hand side of the expression \eqref{e:solve2} is proportional to $\cos(z)$ is $a^N$. For $N$ even and $(m,n)=(1,(N-2)/2)$, it is $a^{2N-1}$.
Upon comparing coefficients in \eqref{e:solve2}, the first non-trivial equation emerges at $\mathcal{O}(a^N)$, expressed as
\begin{equation}\label{e:solve1}
\mathcal{D}_\rho\eta_N= \alpha  \cos^N{z}-c_{N-1}\cos{z}.
\end{equation}

\no Taking the inner product with $\cos{z}$, we get

\begin{equation}\label{e:c3}
\langle\mathcal{D}_\rho\eta_N,\cos{z}\rangle=\alpha \langle\cos^N{z},\cos{z} \rangle-c_{N-1}\langle\cos{z},\cos{z} \rangle.
\end{equation}

\no Using the self adjointness of the operator $\mathcal{D}_\rho$, 
\begin{equation}
\langle\mathcal{D}_\rho\eta_N,\cos{z}\rangle=\langle\eta_N,\mathcal{D}_\rho\cos{z}\rangle=\langle\eta_N,0 \rangle=0, 
\end{equation}

\no since $\cos{z}\in\ker{\mathcal{D}_\rho}$, see \eqref{e:ker}. 
Consequently, \eqref{e:c3} yields 
 \begin{equation}\label{e:c}
c_{N-1}=\alpha \langle\cos^N{z},\cos{z} \rangle.
\end{equation}
 
\no Using the invertibility of the operator $\mathcal{D}_\rho$, and solving \eqref{e:solve1} for $\eta_N$ gives
\begin{equation}\label{e:eta}
\eta_N=\alpha \mathcal{D}_\rho^{-1} ( \cos^N{z}-c_{N-1}\cos{z}). 
\end{equation}

\no Proceeding similarly for even $N$, it follows that $c_{N-1}=0$. For this case, the first non-trivial equation emerges at $\mathcal{O}(a^{2N-1})$ and reads
\[
\mathcal{D}_\rho\eta_{2N-1}=\alpha N(\cos^{N-1}{z})\eta_N-c_{2N-2}\cos{z},
\] 

\no which leads to
\begin{equation}\label{e:c4}
c_{2N-2}=\alpha \langle N(\cos^{N-1}{z})\eta_N,\cos{z}\rangle. 
\end{equation}

For even $N$, $\alpha$ is chosen to be $1$, without loss of generality. This concludes the proof. \end{proof}

For a given power of nonlinearity $N$, we can calculate the leading-order corrections $\eta_N$ and $c_\tau$ from \eqref{e:c}, \eqref{e:eta} and \eqref{e:c4}. For $N=2, \ldots, 7$, these results are given in Tables \ref{t:2} (odd $N$) and \ref{t:1} (even $N$).


\begin{table}[H]
  \centering
{\renewcommand{\arraystretch}{2}
\begin{tabular}{|c|c|c|}
  \hline
  $N$ & $\eta_N$& $c_{N-1}$ \\
  \hline
  $3$ &$\eta_3=\dfrac{\alpha}{4(\jmath(\rho)-\jmath(3\rho))}\cos{(3z)}$ &$c_2=\dfrac{3\alpha}{4}$  \\[5pt]
  \hline
  $5$ & $\eta_5=\dfrac{5\alpha}{16(\jmath(\rho)-\jmath(3\rho))}\cos{(3z)}+\dfrac{\alpha}{16(\jmath(\rho)-\jmath(5\rho))}\cos{(5z)}$ & $c_4=\dfrac{5\alpha}{8}$ \\[5pt]
  \hline
  $7$&$\eta_7=\dfrac{21\alpha}{64(\jmath(\rho)-\jmath(3\rho))}\cos{(3z)}+\dfrac{7\alpha}{64(\jmath(\rho)-\jmath(5\rho))}\cos{(5z)}+$&$c_{6}=\dfrac{35\alpha}{64}$ \\ [5pt]
  &$\dfrac{\alpha}{64(\jmath(\rho)-\jmath(7\rho))}\cos{(7z)}$&\\[5pt]
  \hline
\end{tabular}
}
\caption{Expressions of $\eta_N$ and $c_{N-1}$ for odd values of $N$}\label{t:2}
\end{table}
 
 \begin{table}[H]
  \centering
  {\renewcommand{\arraystretch}{2}
\begin{tabular}{|c|c|c|}
  \hline
  $N$ & $\eta_N$& $c_{2N-2}$ \\
  \hline
  $2$ &$\eta_2=\dfrac{1}{2(\jmath(\rho)-1)}+\dfrac{1}{2(\jmath(\rho)-\jmath(2\rho))}\cos{(2z)}$ &$c_2=\dfrac{1}{(\jmath(\rho)-1)}+\dfrac{1}{2(\jmath(\rho)-\jmath(2\rho))} $  \\[5pt]
  \hline
  $4$ & $\eta_4=\dfrac{3}{8(\jmath(\rho)-1)}+\dfrac{1}{2(\jmath(\rho)-\jmath(2\rho))}\cos{(2z)}+$ & $c_6=\dfrac{9}{8(\jmath(\rho)-1)}+\dfrac{1}{(\jmath(\rho)-\jmath(2\rho))}+$ \\[5pt]
  &$\dfrac{1}{8(\jmath(\rho)-\jmath(4\rho))}\cos{(4z)}$&$\dfrac{1}{16(\jmath(\rho)-\jmath(4\rho))}$\\
  \hline
  $6$&$\eta_6=\dfrac{5}{16(\jmath(\rho)-1)}+\dfrac{15}{32(\jmath(\rho)-\jmath(2\rho))}\cos{(2z)}+$&$c_{10}=\dfrac{3}{16}\left(\dfrac{25}{4(\jmath(\rho)-1)}+\dfrac{225}{32(\jmath(\rho)-\jmath(2\rho))}+\right.$ \\[5pt] 
  &$\dfrac{3}{16(\jmath(\rho)-\jmath(4\rho))}\cos{(4z)}+\dfrac{1}{32(\jmath(\rho)-\jmath(6\rho))}\cos{(6z)}$&$\left.\dfrac{9}{8(\jmath(\rho)-\jmath(4\rho))}+\dfrac{1}{32(\jmath(\rho)-\jmath(6\rho))}\right)$\\[5pt]
  \hline
\end{tabular}}
\caption{Expressions of $\eta_N$ and $c_{2N-2}$ for even values of $N$, with $\alpha=1$, without loss of generality}\label{t:1}
\end{table}

\vs 

{\bf Example.} {\em For the modified KdV equation, $N=3$, $\jmath(k)=1+k^2$, the Stokes waves to leading order are

\[
u= a \cos z -\frac{\alpha a^3}{32\rho^2}\cos 3z+\mathcal{O}(a^4), \quad \quad c=1+\rho^2+\frac{3\alpha a^2}{4}+\mathcal{O}(a^4). 
\]
}

\vs 

{\bf Example.} {\em For the Whitham equation, the Stokes waves to leading order are

\begin{align*}
u&=a\cos z+a^2\left(\frac{1}{2(\jmath(\rho)-1}+\frac{1}{2(\jmath(\rho)-\jmath(2\rho))}\cos 2z\right)+\mathcal(O)(a^3),\\
c&=\jmath(\rho)+a^2 \left(\frac{1}{\jmath(\rho)-1}+\frac{1}{2(\jmath(\rho)-\jmath(2\rho))}\right)+\mathcal(O)(a^4).
\end{align*}

\no where $\jmath(k)=\sqrt{\tanh(k)/k}$. }

\section{The complete Benjamin-Feir spectrum}
Throughout this section, $\eta=\eta(a;\rho)(z)$ denotes a small-amplitude $2\pi$ - periodic traveling wave solution of \eqref{e:gkdv} with wave speed $c=c(a;\rho)$, with $\rho>0$ and $|a|\ll1$. The existence of such a solution is guaranteed by Theorem \ref{t:1}. 

Linearizing \eqref{e:gkdv} around $\eta$ in the spatial frame of reference $z=\rho(x-ct)$, and considering perturbations to $\eta$ of the form $\eta+\epsilon\Tilde{v}+\mathcal{O}(\epsilon^2)$ for $0<|\epsilon|\ll 1$, we obtain the linear evolution equation 

\begin{equation}
    \Tilde{v}_t=\rho\partial_z(c-\mathcal{J}_\rho-N\alpha \eta^{N-1})\Tilde{v}.
\end{equation}

\no We seek a solution of the form $\Tilde{v}(z,t)=\psi(z)\exp(\lambda \rho t)$, $\lambda\in \mathbb{C}$ and $\psi \in L^2(\R)$, the space of square-integrable functions on $\R$, to arrive at the equation

\begin{equation}
\label{e:l}\lambda \psi=\mathcal{Q}_a\psi,
\end{equation}

\no where

\begin{equation}\label{e:op}
    \mathcal{Q}_a=\partial_z \circ (c-\mathcal{J}_\rho-N\alpha \eta^{N-1}).
\end{equation}

\vs

{\bf Example.} {\em For the focusing modified KdV equation with $\alpha=-1$, $N=3$, $\jmath(k)=1+k^2$, the linear operator \eqref{e:op} is 

\[
\mathcal{Q}_a=\partial_z \circ (c-1+3\eta^2+\rho^2 \partial_z^2).
\]
}

\vs

{\bf Example.} {\em For the Whitham equation, the linear operator \eqref{e:op} is

\[
\mathcal{Q}_a=\partial_z \circ (c-{\mathcal J}_\rho-2\eta).
\]
}

\vs

Since the coefficients of the operator $\mathcal{Q}_a$ are $2\pi$-periodic, we can use Floquet theory and write  all solutions of \eqref{e:l} in $L^2(\R)$  as superpositions of solutions of the form $\psi(z)=e^{i\mu z}\Tilde{\psi}(z)$, with Floquet exponent $\mu\in\left(-1/2,1/2\right]$, where $\Tilde{\psi}$ is a $2\pi$-periodic function. We have

\begin{equation*}
    \lambda\Tilde{\psi}=e^{-i\mu z}\mathcal{Q}_ae^{i\mu z}\Tilde{\psi}=\mathcal{Q}_{a,\mu}\Tilde{\psi},
\end{equation*}

\no where $\mathcal{Q}_{a,\mu}: H^s(\T)\to L^2(\T)$ is given by

\begin{equation}\label{e:op1}
    \mathcal{Q}_{a,\mu}=(\partial_z+i\mu)\circ (c-e^{-i\mu z}\mathcal{J}_\rho e^{i\mu z}-N\alpha\eta^{N-1}),
\end{equation}

\no with $s=\max(1,1+\sigma)$ and $\sigma$ as in Hypothesis (H2). Consequently,

\begin{equation}\label{e:spe}
\operatorname{spec}_{L^2(\R)}(\mathcal{Q}_a)= \bigcup_{\mu\in(-1/2,1/2]}\operatorname{spec}(\mathcal{Q}(a,\mu)). 
\end{equation}

\no For each $\mu\in\left(-1/2,1/2\right]$, the $L^2(\T)$ spectrum of $\mathcal{Q}_{a,\mu}$ consists of discrete eigenvalues. Thus, the essential $L^2(\R)$ spectrum of $\mathcal{Q}_{a}$ can be described by the one-parameter family of point spectra of the associated Bloch operators $\mathcal{Q}_{a,\mu}$.
We proceed to examine the $L^2(\T)$ spectrum of $\mathcal{Q}_{a,\mu}$ as a function of the Floquet exponent $\mu\in\left(-1/2,1/2\right]$, for $|a|$ sufficiently small. Lemma~\ref{l:2} allows us to consider $\mu\in[0,1/2]$ only. 

We begin by considering the case $a=0$, which corresponds to the trivial solution $\eta=0$. Since $\operatorname{spec}(\mathcal Q_{0,\mu})$ is associated with a differential operator having periodic coefficients, its spectrum can be explicitly determined through straightforward Fourier analysis:

\begin{align}\label{E:spec}
    \mathcal Q_{0,\mu}e^{inz} = i\Omega_{n,\mu}e^{inz},\quad \text{for}\quad n \in \mathbb{Z},
\end{align}

\no where 

\begin{align}\label{E:oomega}
    \Omega_{n,\mu} =(n+\mu)(\jmath(\rho)-\jmath(\rho(n+\mu)). 
\end{align}

\no It follows that the $L^2(\T)$-spectrum of $\mathcal Q_{0,\mu}$ is 

\begin{equation}\label{e:spec}
    \operatorname{spec}_{L^2(\mathbb{T})}(\mathcal Q_{0,\mu})=\{i\Omega_{n,\mu}, n \in \Z\},
\end{equation}

\no which implies that $\operatorname{spec}_{L^2(\mathbb{T})}(\mathcal Q_{0,\mu})$ consists of purely imaginary eigenvalues with finite algebraic multiplicity, as expected, since $a=0$ represents the trivially-stable zero solution.
Moreover, Lemma \ref{l:2} implies that $\operatorname{spec}_{L^2(\mathbb{T})}(\mathcal Q_{a,\mu})$ is symmetric about the imaginary axis. Therefore, as $\mu$ and $a$ vary, eigenvalues of $\mathcal{Q}_{a,\mu}$ may leave the imaginary axis only through collisions with other purely imaginary eigenvalues, leading to instability. Therefore, we analyze the non-simple eigenvalues of the operator. In particular, zero is an eigenvalue of $\mathcal{Q}_{0,0}$ with algebraic multiplicity three. 

Furthermore, $\operatorname{spec}(\mathcal Q_{0,0})$
decomposes in two separated parts,

\begin{equation*}
           \operatorname{spec}(\mathcal Q_{0,0})= \operatorname{spec}_0(\mathcal Q_{0,0})\cup\operatorname{spec}_1(\mathcal Q_{0,0}),
\end{equation*}

\no where $\operatorname{spec}_0(\mathcal Q_{0,0})=\{0\}$ and

\begin{equation*}
\operatorname{spec}_1(\mathcal Q_{0,0})\subset\{\lambda\in i\R:|\lambda|\geq |\jmath(\rho)-\jmath(2\rho)|\}.
\end{equation*}

\no Next, we examine the stability of solutions of small amplitude $a\neq 0$. 

\begin{lemma}\label{l:31}
    For $a$ and $\mu$ sufficiently small, the following properties hold. 
    
\begin{enumerate}[(a)]

\item The point spectrum of $\mathcal{Q}_{a,\mu}$ decomposes as 

\begin{equation*}
           \operatorname{spec}(\mathcal Q_{a,\mu})= \operatorname{spec}_0(\mathcal Q_{a,\mu})\cup\operatorname{spec}_1(\mathcal Q_{a,\mu}),
\end{equation*}

\no with

\begin{equation*}
           \operatorname{spec}_0(\mathcal Q_{a,\mu})\subset B(0;R/3),\quad \operatorname{spec}_1(\mathcal Q_{a,\mu})\subset \C\setminus \overline{B(0;R/2)},
\end{equation*}

\no where $R=|\jmath(\rho)-\jmath(2\rho)|$. In other words, for fixed $a$ and $\mu$, there is a separation between the point spectrum near the origin and the rest of the spectrum. The size of this separation is independent of $a$ or $\mu$.



        
\item The spectral projection $\Pi_a(\mu)$ associated with $\operatorname{spec}_0(\mathcal Q_{a,\mu})$, 

\begin{equation}\label{e:pr}
\Pi_{a,\mu}=\dfrac{1}{2\pi i}\oint_{\partial B(0;R/3)}(\lambda-\mathcal{Q}_{a,\mu})^{-1}d\lambda, 
\end{equation}

\no satisfies $\|\Pi_{a,\mu}-\Pi_{0,0}\|_{L^2(\T)\to L^2(\T)}=\mathcal{O}(|\mu|+|a|)$. The operators $\Pi_{a,\mu}$ are well-defined projectors commuting with $\mathcal{Q}_{a,\mu}$:

\begin{equation}\label{e:pp}
\Pi^2_{a,\mu}=\Pi_{a,\mu},\quad\Pi_{a,\mu}\mathcal{Q}_{a,\mu}=\mathcal{Q}_{a,\mu}\Pi_{a,\mu}.
\end{equation}   
        
\item The projectors $\Pi_{a,\mu}$ are similar to each other: 

\beq\label{e:u2}
\mathcal{U}_{a,\mu}\Pi_{0,0}\mathcal{U}^{-1}_{a,\mu}=\Pi_{a,\mu},\quad \mathcal{U}^{-1}_{a,\mu}\Pi_{a,\mu}\mathcal{U}_{a,\mu}=\Pi_{0,0},
\eeq

\no with transformation operators

\beq\label{e:u}
\mathcal{U}_{a,\mu}=(\mathcal{I}-(\Pi_{a,\mu}-\Pi_{0,0})^2)^{-1/2}[\Pi_{a,\mu}\Pi_{0,0}+(\mathcal{I}-\Pi_{a,\mu})(\mathcal{I}-\Pi_{0,0})],
\eeq 

\no which are bounded and invertible in $H^s(\T)$ and in $L^2(\T)$. Their inverse is
        
\begin{equation}\label{e:u1}
\mathcal{U}^{-1}_{a,\mu}=[\Pi_{0,0}\Pi_{a,\mu}+(\mathcal{I}-\Pi_{0,0})(\mathcal{I}-\Pi_{a,\mu})](\mathcal{I}-(\Pi_{a,\mu}-\Pi_{0,0})^2)^{-1/2}.
\end{equation}

\item The subspaces $\mathscr{F}_{a,\mu}=\rm{Rg}(\Pi_{a,\mu})$ are isomorphic to each other: $\mathscr{F}_{a,\mu}=\mathcal{U}_{a,\mu}\mathscr{F}_{0,0}$. In particular, $\dim{\mathscr{F}_{a,\mu}}=\dim{\mathscr{F}_{0,0}}=3$.

\item $\operatorname{spec}_1(\mathcal Q_{a,\mu})\subset i \R$

\end{enumerate}
\end{lemma}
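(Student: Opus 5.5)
The plan is to treat $\mathcal{Q}_{a,\mu}$ as an analytic perturbation of $\mathcal{Q}_{0,0}$ in the sense of Kato, and to handle parts (a)--(d) by standard resolvent and projector arguments. Part (e) needs an extra structural argument.

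\textbf{Perturbative setup.} First I write
\[
\mathcal{Q}_{a,\mu}-\mathcal{Q}_{0,0}=\mathcal{Q}_{0,\mu}-\mathcal{Q}_{0,0}+(\partial_z+i\mu)\circ\big(c(a)-c_0-N\alpha\eta^{N-1}\big).
\]
By Theorem~\ref{t:1}, $c(a)-c_0=\mathcal{O}(a^\tau)$ and $\eta^{N-1}=\mathcal{O}(a^{N-1})$ in $H^\infty$. Hence the second term is $\partial_z$ composed with a multiplication operator of norm $\mathcal{O}(|a|)$. It is relatively bounded with respect to $\mathcal{Q}_{0,0}$, with relative bound $\mathcal{O}(|a|)$, because (H2) with $\sigma\geq -1$ gives $|n(\jmath(\rho)-\jmath(\rho n))|\gtrsim |n|$ for large $|n|$.

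For the first term, $e^{-i\mu z}\mathcal{J}_\rho e^{i\mu z}$ is the Fourier multiplier $\jmath(\rho(n+\mu))$. Its difference from $\jmath(\rho n)$, relative to the symbol of $\mathcal{Q}_{0,0}$, is $\mathcal{O}(|\mu|)$, using smoothness of $\jmath$ together with (H2). I assume $\jmath$ is $C^1$ with derivative controlled by (H2), as is implicit in the paper. This makes $\{\mathcal{Q}_{a,\mu}\}$ a holomorphic family of type (A) in $a$ and $\mu$ near $(0,0)$.

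\textbf{Parts (a)--(d).} The unperturbed spectrum is $\{i\Omega_{n,0}\}$, with $\Omega_{0,0}=\Omega_{\pm1,0}=0$. By (H3), every other $|\Omega_{n,0}|$ is bounded below; the paper records this bound as $\geq R$. So the circles $\partial B(0;R/3)$ and $\partial B(0;R/2)$ lie in the resolvent set of $\mathcal{Q}_{0,0}$.

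On the annulus $R/3\le|\lambda|\le R/2$ I bound the resolvent $\|(\lambda-\mathcal{Q}_{0,0})^{-1}\|$, and also $\|\partial_z(\lambda-\mathcal{Q}_{0,0})^{-1}\|$, uniformly. A Neumann series then shows that $\lambda-\mathcal{Q}_{a,\mu}$ is invertible there for $|a|+|\mu|$ small, with
\[
\|(\lambda-\mathcal{Q}_{a,\mu})^{-1}-(\lambda-\mathcal{Q}_{0,0})^{-1}\|=\mathcal{O}(|a|+|\mu|).
\]
This gives the spectral splitting in (a). Integrating over $\partial B(0;R/3)$ gives $\|\Pi_{a,\mu}-\Pi_{0,0}\|=\mathcal{O}(|a|+|\mu|)$, and the Riesz projection properties in \eqref{e:pp} are standard, which is (b).

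Since $\|\Pi_{a,\mu}-\Pi_{0,0}\|<1$, the Kato transformation operator \eqref{e:u} is well defined via the binomial series for $(\mathcal{I}-(\Pi_{a,\mu}-\Pi_{0,0})^2)^{-1/2}$. The standard Kato identities give \eqref{e:u2} and \eqref{e:u1}, which is (c); this is quoted from Kato's book. The same bounds hold in $H^s$ because the projectors map $L^2$ into $H^s$ boundedly via the resolvent. Then (d) follows: $\mathscr{F}_{0,0}=\mathrm{span}\{1,\cos z,\sin z\}$ has dimension $3$, and $\mathcal{U}_{a,\mu}$ is an isomorphism.

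\textbf{Part (e), the main obstacle.} For this I use the Hamiltonian structure: $\mathcal{Q}_{a,\mu}=\mathcal{J}_\mu\mathcal{L}_{a,\mu}$, where $\mathcal{J}_\mu=\partial_z+i\mu$ is skew-adjoint and $\mathcal{L}_{a,\mu}$ is self-adjoint. This gives the symmetry of the spectrum about the imaginary axis, which I take from Lemma~\ref{l:2}. It remains to show that the eigenvalues in $\operatorname{spec}_1$ are simple, or at least that they do not collide with Krein signatures of opposite sign, uniformly in small $(a,\mu)$.

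My first attempt is the following. For $|a|$ and $|\mu|$ small, each eigenvalue $i\Omega_{n,\mu}$ with $n\ne 0,\pm1$ is isolated, by the same non-resonance argument as above. Then a $1\times1$ reduction, combined with the reflection symmetry, forces the perturbed eigenvalue to stay on $i\R$: if it moved off the axis, its mirror image would be a second eigenvalue inside the same small disc, contradicting multiplicity one.

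This breaks down where $\Omega_{n,\mu}=\Omega_{m,\mu}$ for some $n\neq m$. At such collisions I would compute the Krein signature $\operatorname{sign}\langle\mathcal{L}_{0,\mu}e^{inz},e^{inz}\rangle=\operatorname{sign}(c_0-\jmath(\rho(n+\mu)))$. I would hope to argue that colliding modes have equal signatures for small $\mu$, so that they stay on the axis.

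A further difficulty is that there are infinitely many modes and high-frequency uniformity is needed. I would handle this with (H2): for large $|n|$ the gaps between distinct $\Omega_{n,\mu}$ grow, so only finitely many potential collisions must be treated. I regard this part as genuinely delicate, and it may require restricting the admissible $\rho$.
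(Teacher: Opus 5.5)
Parts (a)--(d) are fine and follow the paper's argument: a Neumann series for $\lambda-\mathcal{Q}_{a,\mu}$ on the annulus, the Riesz projection, and Kato's transformation operator. Your explicit assumption on $\jmath'$ is what the paper's Taylor expansion in $\mu$ tacitly uses.

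The gap is (e). There you give a programme whose decisive step is only hoped for, and the mode-by-mode route has two concrete problems.

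\begin{enumerate}
\item The gaps between the $\Omega_{n,\mu}$ do not grow when $\sigma\le 0$. For the Whitham symbol, $\Omega_{n+1,\mu}-\Omega_{n,\mu}\to\jmath(\rho)$. Meanwhile $\partial_z\circ N\alpha\eta^{N-1}$ is first order, with matrix entries of size $\mathcal{O}(|a|^{N-1}|n|)$ at frequency $n$. So continuation of simple eigenvalues cannot be done uniformly in $n$.
\item Equal Krein signatures at collisions do not follow from the non-resonance condition \eqref{e:k_cond} at the given $\rho$. Take $\jmath(k)=1-k^2+bk^4$ and $\rho=1/(3\sqrt b)$. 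Then \eqref{e:k_cond} holds, yet $\Omega_{2,0}=\Omega_{-3,0}=\tfrac83\rho^2$, while $\jmath(\rho)-\jmath(2\rho)=\tfrac43\rho^2>0>-\tfrac89\rho^2=\jmath(\rho)-\jmath(3\rho)$. This is an opposite-signature collision, and near it (e) should not be expected.
\end{enumerate}

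The missing idea, which is the paper's argument, is global rather than collision by collision. If $\mathcal{Q}_{a,\mu}\psi=\lambda\psi$, then
\[
\lambda\langle\mathcal{T}_{a,\mu}\psi,\psi\rangle=\langle(\partial_z+i\mu)\mathcal{T}_{a,\mu}\psi,\mathcal{T}_{a,\mu}\psi\rangle\in i\R,
\]
while $\langle\mathcal{T}_{a,\mu}\psi,\psi\rangle\in\R$. Hence $\Re\lambda\neq 0$ forces $\langle\mathcal{T}_{a,\mu}\psi,\psi\rangle=0$; this is the lemma of Haragus et al.\ that the paper invokes.

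It therefore suffices to show that the quadratic form of $\mathcal{T}_{a,\mu}$ does not vanish on $\operatorname{Rg}(\mathcal{I}-\Pi_{a,\mu})$. Write $\psi=(\mathcal{I}-\Pi_{0,0})\psi-(\Pi_{a,\mu}-\Pi_{0,0})\psi$ and use part (b) together with the smallness of $\mathcal{T}_{a,\mu}-\mathcal{T}_{0,0}$. This follows by perturbation once the symbol $\jmath(\rho)-\jmath(\rho n)$ of $\mathcal{T}_{0,0}$ has one sign and is bounded away from zero for $|n|\ge 2$.

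That single definiteness property says that all modes of $\operatorname{spec}_1$ carry the same Krein signature, which is exactly your hope. It handles every collision and every frequency at once, with no case analysis. It is the input the paper attributes to (H3).

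Its sign part does follow from (H3) if (H3) is read for all $k>0$, as it is stated in the introduction. For fixed $n$, the continuous function $k\mapsto \jmath(nk)-\jmath(k)$ is then nonvanishing, hence of constant sign. If moreover $\jmath(k)-1\sim c|k|^p$ with $c\neq 0$ as $k\to 0$, that sign is the sign of $c$ for every $n\ge 2$.

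So your suspicion that an extra restriction is needed is correct. The restriction lives in the global form of (H3), i.e.\ definiteness of the energy on the whole complementary subspace, not in a case-by-case treatment of individual collisions.
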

\begin{proof}
The proof is provided in Lemma~\ref{l:31a} of Appendix~\ref{a:2}.
\end{proof}

\sloppypar Lemma~\ref{l:31} shows that the operator $\mathcal{U}_{a,\mu}$ is an isomorphism between $\mathscr{F}_{0,0}$ and $\mathscr{F}_{a,\mu}$. Lemma~\ref{l:b2} entails that this isomorphism maintains the symplectic and reversible properties of the basis.

Consider the decomposition of the spectrum of $\mathcal Q_{a,\mu}$ in Lemma \ref{l:31}. The eigenvalues in
$\operatorname{spec}_0(\mathcal Q_{a,\mu})$ are the eigenvalues of 
the restriction of $\mathcal{Q}_{a,\mu}$ to the three-dimensional subspace $\mathscr F_{a,\mu}$. We determine the location of these
eigenvalues by successively computing 
a basis of  $\mathscr F_{a,\mu}$, 
the $3\times3$ matrix representing the action of $\mathcal Q_{a,\mu}$
on this basis, and the eigenvalues of this matrix. Note that for $a=0$, $\mathscr{F}_{0,\mu}$ is spanned by $\{\cos z, \sin z,1/\sqrt{2}\}$. Let $\Sigma(a, \mu)=\{\psi_1^+(a, \mu),\psi_1^-(a, \mu),\psi_0(a, \mu)\}$ be the reversible and symplectic basis of $\mathscr{F}_{a,\mu}$. Thus $\Sigma(0,\mu)=\{\psi_1^+(0, \mu),\psi_1^-(0, \mu),\psi_0(0, \mu)\}=\{\cos z, \sin z,1/\sqrt{2}\}$. We use the transformation operators $\mathcal{U}_{a,\mu}$ derived in Lemma \ref{l:31} to construct the symplectic and reversible basis for $\mathscr{F}_{a,\mu}$ by

\begin{equation}\label{e:teig}
    \psi_i^{\sigma}(a,\mu)=\mathcal{U}_{a,\mu}\psi_i^{\sigma}(0,\mu).
\end{equation}



\begin{lemma}[Expansion of the basis $\Sigma$]\label{l:b1} For $a$ and $\mu$ sufficiently small, the expansion for the symplectic and reversible basis for $\mathscr{F}_{a,\mu}$ is

\begin{equation}\label{e:bf1}
    \begin{aligned}
        \psi_1^+(a,\mu)=&\cos{z}+\alpha N a^{N-1}\mathcal{D}_\rho^{-1}\left(\cos^N z-\psi^{(1)}_{N}\right)+\mathcal{O}(a^N,a^{N-1}\mu),\\
        \psi_1^-(a,\mu)=&\sin{z}+\alpha N a^{N-1} \mathcal{D}_\rho^{-1}\left(\cos^{N-1} z\sin{z}-\psi^{(2)}_{N}\right)+\mathcal{O}(a^N,a^{N-1}\mu),\\
        \psi_0(a,\mu)=&\dfrac{1}{\sqrt{2}}+\dfrac{1}{\sqrt{2}}\alpha N a^{N-1}\mathcal{D}_\rho^{-1}\left(\cos^{N-1} z-\psi^{(3)}_{N}\right)+\mathcal{O}(a^N,a^{N-1}\mu),
        \end{aligned}
\end{equation}

\no where

\begin{equation}\label{e:expand31} \psi^{(1)}_N=
\left\{\begin{aligned}
\frac{1}{2}\left\langle\cos^N{z},1\right\rangle,\quad &N~~\text{even},\\
\cos{z}\left\langle\cos^N{z},\cos{z}\right\rangle,\quad &N~~\text{odd},
\end{aligned}\right.
\end{equation}

\begin{equation}\label{e:expand32} \psi^{(2)}_N=
\left\{\begin{aligned}
0,\quad &N~~\text{even},\\
\sin{z}\left\langle\cos^{N-1}{z}\sin{z},\sin{z}\right\rangle,\quad &N~~\text{odd},
\end{aligned}\right.
\end{equation}

\begin{equation}\label{e:expand4} \psi^{(3)}_N=
\left\{\begin{aligned}
\cos{z}\left\langle\cos^{N-1}{z},\cos{z}\right\rangle,\quad &N~~\text{even},\\
\frac{1}{2}\left\langle\cos^{N-1}{z},1\right\rangle,\quad &N~~\text{odd}.
\end{aligned}\right.
\end{equation}


\end{lemma}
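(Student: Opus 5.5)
The plan is to expand the transformation operator $\mathcal{U}_{a,\mu}$ of Lemma~\ref{l:31} to first nontrivial order in $a$, and then apply it to $\Sigma(0,\mu)=\{\cos z,\sin z,1/\sqrt2\}$ as in \eqref{e:teig}.

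\emph{Step 1: expand the operator.} By Theorem~\ref{t:1}, $\eta^{N-1}=a^{N-1}\cos^{N-1}z+\mathcal{O}(a^{N})$ and $c=c_0+\mathcal{O}(a^{N-1})$. Hence
\begin{equation*}
\mathcal{Q}_{a,\mu}=\mathcal{Q}_{0,\mu}-\alpha N a^{N-1}\partial_z\circ\cos^{N-1}z+a^\tau c_\tau\partial_z+\mathcal{O}(a^{N},a^{N-1}\mu).
\end{equation*}
Since $\tau\ge N-1$, the $c_\tau$ term only matters at order $a^{N-1}$ when $N$ is odd. In that case it acts diagonally on the basis and can be absorbed below.

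\emph{Step 2: expand the projector and the transformation.} Insert this expansion into the Riesz formula \eqref{e:pr} and use the resolvent identity. This gives $\Pi_{a,\mu}=\Pi_{0,\mu}+a^{N-1}P_{N-1}+\mathcal{O}(a^N,a^{N-1}\mu)$, where $\Pi_{0,\mu}-\Pi_{0,0}=\mathcal{O}(\mu)$. The contour integral in $P_{N-1}$ is evaluated using the Fourier diagonalisation \eqref{E:spec}. The result is
\begin{equation*}
P_{N-1}f=\Pi_{0,0}\,\mathcal{B}\,\mathcal{Q}_{0,0}^{-1}(I-\Pi_{0,0})f+\mathcal{Q}_{0,0}^{-1}(I-\Pi_{0,0})\,\mathcal{B}\,\Pi_{0,0}f,\qquad \mathcal{B}=-\alpha N\partial_z\cos^{N-1}z.
\end{equation*}
Here $\mathcal{Q}_{0,0}^{-1}$ is the inverse on the range of $I-\Pi_{0,0}$, where $\mathcal{Q}_{0,0}=\partial_z\circ(-\mathcal{D}_\rho)$. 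Then expand \eqref{e:u}. Since $(\Pi_{a,\mu}-\Pi_{0,0})^2=\mathcal{O}(a^{2N-2},\mu^2,\dots)$, we get $\mathcal{U}_{a,\mu}=I+(\Pi_{a,\mu}-\Pi_{0,0})(2\Pi_{0,0}-I)+\cdots$. Applied to $f\in\mathscr{F}_{0,0}$, this reduces to $\mathcal{U}_{a,\mu}f=f+(I-\Pi_{0,0})(\Pi_{a,\mu}-\Pi_{0,0})f$. To order $a^{N-1}$ that equals
\begin{equation*}
f+\mathcal{Q}_{0,0}^{-1}(I-\Pi_{0,0})\mathcal{B}f=f+\alpha N a^{N-1}\mathcal{D}_\rho^{-1}(I-\Pi_{0,0})(\cos^{N-1}z\,f).
\end{equation*}
Note that the $\partial_z$ in $\mathcal{B}$ cancels the $\partial_z$ in $\mathcal{Q}_{0,0}$. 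This cancellation is legitimate on the complement of the constants and the first harmonics. The $\mu$-corrections only enter at $\mathcal{O}(\mu)$, and at $\mathcal{O}(a^{N-1}\mu)$ once multiplied by the $a$-term. The pure $\mu$ terms vanish or are absorbed, because $\Sigma(0,\mu)$ is $\mu$-independent and $\mathscr{F}_{0,\mu}=\mathscr{F}_{0,0}$.

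\emph{Step 3: compute the projections.} Apply the formula to $f=\cos z$, $\sin z$ and $1/\sqrt2$ in turn. The task is to identify $\Pi_{0,0}(\cos^{N-1}z\,f)$, the projection onto $\{1,\cos z,\sin z\}$, using parity.
\begin{itemize}
\item For $f=\cos z$: $\cos^N z$ contains only odd harmonics when $N$ is odd, giving projection $\cos z\langle\cos^N z,\cos z\rangle$. It contains only even harmonics when $N$ is even, giving the mean $\tfrac12\langle\cos^N z,1\rangle$ with the normalisation of the paper's inner product. This is $\psi^{(1)}_N$.
\item For $f=\sin z$: $\cos^{N-1}z\sin z$ is odd, so its only possible projection is onto $\sin z$. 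That projection is nonzero only for $N$ odd, which gives $\psi^{(2)}_N$.
\item For $f=1/\sqrt2$: the analogous parity argument gives $\psi^{(3)}_N$.
\end{itemize}
The remaining functions have no kernel components, so $\mathcal{D}_\rho^{-1}$ is well defined on them by (H3).

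\emph{Main obstacle.} I expect the delicate step to be Step 2: justifying that the $\mu$-dependence and the mixed $\Pi_{a,\mu}\Pi_{0,0}$ terms do not contribute at order $a^{N-1}$. This includes showing that the kernel-direction components of $P_{N-1}f$ vanish, so that only the range of $I-\Pi_{0,0}$ survives. I would handle this by writing the resolvent of $\mathcal{Q}_{0,\mu}$ explicitly in Fourier modes, using \eqref{E:oomega}, and checking the residues at $\lambda=0$. The symplectic and reversible structure preserved by $\mathcal{U}_{a,\mu}$ (Lemma~\ref{l:b2}) confirms that no spurious first-harmonic terms appear.
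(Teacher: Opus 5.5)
Your proposal is correct and follows essentially the paper's route. The $a^{N-1}$-coefficient of $\Pi_{a,\mu}$ comes from the single-insertion Riesz integral, $\mathcal{U}_{a,\mu}=\mathcal{I}+\delta(2\Pi_{0,0}-\mathcal{I})+\mathcal{O}(\delta^2)$ with $\delta=\Pi_{a,\mu}-\Pi_{0,0}$ is applied to $\mathscr{F}_{0,0}$, and the pure $\mu$-terms vanish because $\Pi_{0,\mu}=\Pi_{0,0}$ (the content of Lemmas~\ref{l:pd1}--\ref{l:ud3}); your closed-form reduced-resolvent evaluation, with the $\partial_z$ cancellation and the parity argument, is a tidier version of the paper's mode-by-mode residue computation. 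One correction: since $c_0=\jmath(\rho)$, one has $\mathcal{Q}_{0,0}=\partial_z\circ\mathcal{D}_\rho$, and the Riesz integral gives $-\Pi_{0,0}\mathcal{B}S-S\mathcal{B}\Pi_{0,0}$ with $S=\mathcal{Q}_{0,0}^{-1}(\mathcal{I}-\Pi_{0,0})$, so both of your signs are wrong; the two slips cancel, and your final formula $\alpha N a^{N-1}\mathcal{D}_\rho^{-1}(\mathcal{I}-\Pi_{0,0})(\cos^{N-1}z\,f)$ is correct.
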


\begin{proof}
    The proof is provided in Appendix \ref{a:3}.
\end{proof}

\vs

{\bf Example.} {\em For the focusing modified KdV equation with $\alpha=-1$, $N=3$, $\jmath(k)=1+k^2$, the expansion for the symplectic and reversible basis is
\begin{align*}
    \psi_1^+(a,\mu)&=\cos z +\dfrac{3a^2}{32\rho^2}\cos{3z}+\mathcal{O}(a^3,a^2\mu),\\
    \psi_1^-(a,\mu)&=\sin z +\dfrac{3a^2}{32\rho^2}\sin{3z}+\mathcal{O}(a^3,a^2\mu),\\
    \psi_0(a,\mu)&=\dfrac{1}{\sqrt{2}}+\dfrac{a^2}{2\sqrt{2}\rho^2}\cos{2z}+\mathcal{O}(a^3,a^2\mu).
\end{align*}
}
\vs

{\bf Example.} {\em For the Whitham equation, the expansion for the symplectic and reversible basis is
\begin{align*}
    \psi_1^+(a,\mu)&=\cos z +\dfrac{a}{\jmath(\rho)-\jmath(2\rho)}\cos{2z}+\mathcal{O}(a^2,a\mu),\\
    \psi_1^-(a,\mu)&=\sin z +\dfrac{a}{\jmath(\rho)-\jmath(2\rho)}\sin{2z}+\mathcal{O}(a^2,a\mu),\\
    \psi_0(a,\mu)&=\dfrac{1}{\sqrt{2}}+\mathcal{O}(a^2,a\mu), 
\end{align*}

\no where $\jmath(k)=\sqrt{\tanh(k)/k}$. In fact, not specifying $\jmath(k)$, these expressions give the sympletic and reversible basis for any equation with $N=2$. 

}

\vs

First, we introduce the notions of complex Hamiltonian and complex reversible operators.

\begin{definition}\label{d:1} 
An operator $\mathcal{Q}$ is said to be

\begin{enumerate}[(i)]

\item \textbf{Complex Hamiltonian} if $\mathcal{Q}=\mathcal{A}\mathcal{T}$, where $\mathcal{A}$ is skew adjoint and $\mathcal{T}$ is self adjoint, that is,
\[
\mathcal{A}^\ast = -\mathcal{A} 
\quad \text{and} \quad 
\mathcal{T}^\ast = \mathcal{T}.
\]
In this manuscript, we take $\mathcal{A}=\partial_x$.

\item \textbf{Complex reversible} if
\[
\mathcal{Q} \circ \overline{\mathcal{P}} 
= - \overline{\mathcal{P}} \circ \mathcal{Q},
\]
where $\overline{\mathcal{P}}$ is the complex involution defined by

\begin{equation}\label{e:in}
[\overline{\mathcal{P}}\eta](x) = \overline{\eta}(-x).
\end{equation}

\end{enumerate}
\end{definition}

The operator $\mathcal{Q}_{a,\mu}$ assumes the form of a complex Hamiltonian and reversible operator,

 \begin{equation}\label{e1:h}
\mathcal{Q}_{a,\mu}=\mathcal{A}_\mu\mathcal{T}_{a,\mu},
 \end{equation}
 
\no with $\mathcal{T}_{a,\mu}$ self adjoint and $\mathcal{A}_\mu$ skew adjoint. Here,
\[
\mathcal{A}_\mu = \partial_x + i\mu .
\]

\no For $\mu \neq 0$ the operator $\mathcal{A}_\mu$ is invertible on the space of periodic functions, that is, on $ L^2(\mathbb{T}) $, and we write

\[
\mathcal{X}_\mu := \mathcal{A}_\mu^{-1}: L^2(\T) \to H^1(\T),
\]

\no which acts as a Fourier multiplier with symbol $1/i(\mu+k)$.
Since $\mathcal{A}_0=\partial_x$ is not invertible on $L^2(\T)$, we restrict $\mathcal{X}_0$ to the mean‑zero subspace
\[
L_0^2(\T)=\Bigl\{f\in L^2(\T)\,:\,\int_\T f(z)\,dz=0\Bigr\}.
\]

We now define $\mu$-symplectic and reversible bases. 

\begin{definition}\label{d:2} 
A linearly independent set $\{\Omega_1^+,\Omega_1^-,\Omega_0\}$ is called
\begin{enumerate}[(i)]
  
      \item 
      \begin{enumerate}
      
      \item {\bf $\mu$-{symplectic}, $\mu\neq0$}, if 
      
      \[
      \langle\mathcal{X}_\mu\Omega_1^+,\Omega_1^+\rangle=\langle\mathcal{X}_\mu\Omega_1^-,\Omega_1^-\rangle=\dfrac{i\mu}{1-\mu^2},\quad \langle\mathcal{X}_\mu\Omega_0,\Omega_0\rangle=-\dfrac{i}{\mu},
      \]
      
      \[
      \langle\mathcal{X}_\mu\Omega_1^+,\Omega_1^-\rangle=\dfrac{1}{1-\mu^2}, \quad\langle\mathcal{X}_\mu\Omega_0,\Omega_1^+\rangle=\langle\mathcal{X}_\mu\Omega_0,\Omega_1^-\rangle=0, 
      \]
  
      \item {\bf $0$-{symplectic}} if both $\Omega_1^+,\Omega_1^-\in L^2_0(\T)$ and 
      
      \[
      \langle\mathcal{X}_0\Omega_1^+,\Omega_1^-\rangle=1,\quad \langle\mathcal{X}_0\Omega_1^+,\Omega_1^+\rangle=0=\langle\mathcal{X}_0\Omega_1^-,\Omega_1^-\rangle;
      \]
      
      \end{enumerate}
      
      \item {\bf reversible} if 
      
      \[
      \overline{\mathcal{P}}\Omega_1^+=\Omega_1^+, \quad\overline{\mathcal{P}}\Omega_1^-=-\Omega_1^-,\quad \overline{\mathcal{P}}\Omega_0=\Omega_0.
      \]
  
\end{enumerate}

\end{definition}

Alternatively, \eqref{e1:h} can be written as
 
\begin{equation}\label{e:h1}
\mathcal{X}_\mu\mathcal{Q}_{a,\mu}=\mathcal{T}_{a,\mu}.
\end{equation}

\no Let $\mathcal{Q}_{ a,\mu}\psi=\gamma_1^+\psi_1^+(a,\mu)+\gamma_1^-\psi_1^-(a,\mu)+\gamma_0\psi_0(a,\mu)$, which implies $\mathcal{T}_{a,\mu}\psi=\mathcal{X}_\mu(\gamma_1^+\psi_1^+(a,\mu)+\gamma_1^-\psi_1^-(a,\mu)+\gamma_0\psi_0(a,\mu))$. For $\mu\neq0$, taking the inner product with every element of the basis gives the system

\begin{align*}
   &\begin{pmatrix}
                \langle \mathcal{T}_{a,\mu}\psi,\psi_1^+(a,\mu)\rangle \\ \langle \mathcal{T}_{a,\mu}\psi,\psi_1^-(a,\mu)\rangle \\ \langle \mathcal{T}_{a,\mu}\psi,\psi_0(a,\mu)\rangle
            \end{pmatrix}=\\
            &\begin{pmatrix}
                \langle \mathcal{X}_{\mu}\psi_1^+(a,\mu),\psi_1^+(a,\mu)\rangle & \langle \mathcal{X}_{\mu}\psi_1^-(a,\mu),\psi_1^+(a,\mu)\rangle & \langle \mathcal{X}_{\mu}\psi_0(a,\mu),\psi_1^+(a,\mu)\rangle\\\langle \mathcal{X}_{\mu}\psi_1^+(a,\mu),\psi_1^-(a,\mu)\rangle & \langle \mathcal{X}_{\mu}\psi_1^-(a,\mu),\psi_1^-(a,\mu)\rangle & \langle \mathcal{X}_{\mu}\psi_0(a,\mu),\psi_1^-(a,\mu)\rangle\\\langle \mathcal{X}_{\mu}\psi_1^+(a,\mu),\psi_0(a,\mu)\rangle & \langle \mathcal{X}_{\mu}\psi_1^-(a,\mu),\psi_0(a,\mu)\rangle & \langle \mathcal{X}_{\mu}\psi_0(a,\mu),\psi_0(a,\mu)\rangle
            \end{pmatrix}\begin{pmatrix}
                \gamma_1^+\\ \gamma_1^-\\ \gamma_0
            \end{pmatrix}.\end{align*}
            
\no Since the unperturbed basis $\{\cos{z}, \sin{z}, 1/\sqrt{2}\}$ satisfies the bracket identities of Definition \ref{d:2}, it is $\mu$-symplectic and reversible. By Lemma \ref{l:b2}, the perturbed basis $\{\psi_1^+(a,\mu), \psi_1^-(a,\mu), \psi_0(a,\mu)\}$, constructed via a smooth Kato-type transformation, inherits this $\mu$-symplectic and reversible structure for small values of $a$ and $\mu$. It follows that

\begin{align*}
\begin{pmatrix}
    \langle \mathcal{T}_{a,\mu}\psi,\psi_1^+(a,\mu)\rangle \\ \langle \mathcal{T}_{a,\mu}\psi,\psi_1^+(a,\mu)\rangle \\ \langle \mathcal{T}_{a,\mu}\psi,\psi_1^+(a,\mu)\rangle
\end{pmatrix}=\begin{pmatrix}
    i\dfrac{\mu}{1-\mu^2}&-\dfrac{1}{1-\mu^2}&0\\\dfrac{1}{1-\mu^2}&i\dfrac{\mu}{1-\mu^2}&0\\0&0&-\dfrac{i}{\mu}
\end{pmatrix}
\begin{pmatrix}
\gamma_1^+\\ \gamma_1^-\\ \gamma_0
\end{pmatrix}.
\end{align*}

\no Inverting this, the $3\times 3$ matrix $Q_{a,\mu}$ representing the Hamiltonian and reversible operator $\mathcal{Q}_{a,\mu}$ with respect to any $\mu$-symplectic and reversible basis of $\mathscr{F}_{a,\mu}$ for $\mu\neq0$ can be decomposed using \eqref{e:h} as  $Q_{a,\mu}=A_\mu T_{a,\mu}$, with

 

\begin{equation}\label{e:m}
A_\mu=
\begin{pmatrix}
i\mu&1&0\\-1&i\mu&0\\0&0&i\mu
\end{pmatrix},
\end{equation}

\begin{equation}\label{e:m111}
    T_{a,\mu}=
\begin{pmatrix}
\langle \mathcal{T}_{a,\mu}\psi_1^+(a,\mu),\psi_1^+(a,\mu)\rangle & \langle \mathcal{T}_{a,\mu}\psi_1^-(a,\mu),\psi_1^+(a,\mu)\rangle & \langle \mathcal{T}_{a,\mu}\psi_0(a,\mu),\psi_1^+(a,\mu)\rangle\\\langle \mathcal{T}_{a,\mu}\psi_1^+(a,\mu),\psi_1^-(a,\mu)\rangle & \langle \mathcal{T}_{a,\mu}\psi_1^-(a,\mu),\psi_1^-(a,\mu)\rangle & \langle \mathcal{T}_{a,\mu}\psi_0(a,\mu),\psi_1^-(a,\mu)\rangle\\\langle \mathcal{T}_{a,\mu}\psi_1^+(a,\mu),\psi_0(a,\mu)\rangle & \langle \mathcal{T}_{a,\mu}\psi_1^-(a,\mu),\psi_0(a,\mu)\rangle & \langle \mathcal{T}_{a,\mu}\psi_0(a,\mu),\psi_0(a,\mu)\rangle
\end{pmatrix}.
\end{equation} 

\no For $\mu=0$, $\psi_1^+,\psi_1^-\in L_0^2(\T)$ and $\psi_0\in\ker{\mathcal{A}_0}=\text{{\rm span}} \langle 1 \rangle$, i.e., $\psi_0$ is a constant function.
Define
 \begin{equation}
     \psi^\circ=\psi-\left\langle\psi,\psi_0\right\rangle\psi_0,
 \end{equation}
 
 \no the function $\psi$ with its projection on $\psi_0$ removed, leaving only the part of $\psi$ orthogonal to $\psi_0$. So, $\psi^\circ\in L^2_0(\T)$, orthogonal to $\psi_0$. We write $\psi^\circ$ as a linear combination of the two remaining basis elements:
 
 \begin{equation}
\psi^\circ=\gamma_1^+\psi_1^++\gamma_1^-\psi_1^-.
 \end{equation}
 
\no Taking the inner product of $ \mathcal{X}_0 \psi^\circ $ with $ \psi_1^+ $ and $ \psi_1^- $, and using the symplectic pairing identities from Definition~\ref{d:2} since $ \psi_1^+, \psi_1^- $ are a $\mu$-symplectic pair, we obtain

\begin{equation} \gamma_1^+=\langle\mathcal{X}_0\psi^\circ,\psi_1^-\rangle\quad\text{and}\quad \gamma_1^-=-\langle\mathcal{X}_0\psi^\circ,\psi_1^+\rangle.
\end{equation}
 
\no The following decomposition of $\psi$ preserves the symplectic structure and ensures consistency with the symplectic pairing between $\psi_1^+$ and $\psi_1^-$:

\begin{equation}\label{e:f}
\psi=\langle\mathcal{X}_0\psi^\circ,\psi_1^-\rangle\psi_1^+-\langle\mathcal{X}_0\psi^\circ,\psi_1^+\rangle\psi_1^-+\langle\psi,\psi_0\rangle\psi_0.
\end{equation}



\no We express $\psi$ in the $0$-symplectic basis ({i.e., $\mu=0$}) $ \{ \psi_1^+, \psi_1^-, \psi_0 \} $. To compute the matrix representation of $ \mathcal{Q}_{a,0} $ in this basis, we observe that 
\beq
\left\langle \mathcal{Q}_{a,0} \psi, \psi_0 \right\rangle 
= -\left\langle \mathcal{T}_{a,0} \psi, \mathcal{A}_0 \psi_0 \right\rangle,
\eeq
since $ \mathcal{Q}_{a,0} = \mathcal{A}_0 \mathcal{T}_{a,0} $. Because $ \psi_0 $ lies in the kernel of $ \mathcal{A}_0 $, this identity ensures the third component decouples and we focus on the action in the $ \{ \psi_1^+, \psi_1^- \} $ - subspace. Applying $ \mathcal{T}_{a,0} $ to $ \psi $, and using the symplectic identities in Definition~\ref{d:2}, we obtain the following coordinate representation of $\psi$ in the symplectic basis:
\beq
\begin{pmatrix}
\langle \mathcal{X}_0 \psi^\circ, \psi_1^- \rangle \\
\langle \mathcal{X}_0 \psi^\circ, \psi_1^+ \rangle
\end{pmatrix}
=
\begin{pmatrix}
0 & 1 \\
-1 & 0
\end{pmatrix}
\begin{pmatrix}
\langle \mathcal{T}_{a,0} \psi, \psi_1^+ \rangle \\
\langle \mathcal{T}_{a,0} \psi, \psi_1^- \rangle
\end{pmatrix}.
\eeq

This matrix expression captures the action of $ \mathcal{Q}_{a,0} $ in the 0-symplectic subspace, consistent with the canonical structure. Therefore, for all $\mu$ the matrix representing the action of $\mathcal{Q}_{a,\mu}:\mathscr{F}_{a,\mu}\to \mathscr{F}_{a,\mu}$ is given by \eqref{e:m}-\eqref{e:m111}. We observe that for any $\Psi_1,\Psi_2\in L^2(\T)$, $\langle\Psi_1,\Psi_2\rangle=\overline{\langle\overline{\mathcal{P}}\Psi_1,\overline{\mathcal{P}}\Psi_2\rangle}$, where $\overline{\mathcal{P}}$ is the complex involution defined in \eqref{e:in}. Using this, along with \eqref{e:tr} and the properties of the reversible basis, it follows that the entries of the matrix $T_{a,\mu}$ are alternating between real and imaginary.

The following definitions are used below.

 \begin{definition}\label{d:3} 
A $3 \times 3$ matrix $Q = AT$ is

\begin{enumerate}[(i)]

    \item {\bf Complex Hamiltonian}, if $T$ is self adjoint and $A$ is skew adjoint with respect to the standard scalar product of $\mathbb{C}^3$,
    
    \item {\bf reversible}, if $Q \circ \mathcal{P} = -\mathcal{P} \circ Q$, where
    
    \[
    \mathcal{P} =
    \begin{pmatrix}
    \varrho & 0 & 0 \\
    0 & -\varrho & 0 \\
    0 & 0 & \varrho
    \end{pmatrix},
    \]
    
    \no and $\varrho: z \mapsto \bar{z}$ denotes conjugation in the complex plane.

\end{enumerate}

Finally, an invertible matrix $Y$ is said to be $J$-symplectic if $Y J Y^* = J$.
\end{definition}

 \begin{lemma}\label{l:Asym}
     Let $T$ be a self-adjoint and reversible matrix and $A$ be a skew-adjoint and reversible matrix. Then for all $\tau\in\R$, $\exp{(\tau AT)}$ is $A$-symplectic and reversibility preserving.
 \end{lemma}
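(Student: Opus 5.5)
The plan is to verify the two properties separately, in both cases by a direct power-series argument on $Y(\tau):=\exp(\tau AT)$. Throughout I read the hypotheses literally in the sense of Definition~\ref{d:3}(ii): ``$T$ reversible'' means $T\circ\mathcal{P}=-\mathcal{P}\circ T$, and ``$A$ reversible'' means $A\circ\mathcal{P}=-\mathcal{P}\circ A$. Here $\mathcal{P}$ is the antilinear map of Definition~\ref{d:3}. ``$A$-symplectic'' means $YAY^\ast=A$, i.e.\ the $J$-symplectic notion with $J=A$. ``Reversibility preserving'' means $Y\circ\mathcal{P}=\mathcal{P}\circ Y$.

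\emph{Symplecticity.} First, since $A^\ast=-A$ and $T^\ast=T$, we have $(AT)^\ast=T^\ast A^\ast=-TA$. Because $\tau$ is real, taking adjoints term by term in the exponential series gives $Y(\tau)^\ast=\exp(-\tau TA)$. Second, an induction on $k$ gives the intertwining identity $(AT)^kA=A(TA)^k$, since $(AT)^{k+1}A=(AT)(AT)^kA=ATA(TA)^k=A(TA)^{k+1}$. Summing the series yields $Y(\tau)A=A\exp(\tau TA)$. Hence
\[
Y(\tau)AY(\tau)^\ast=A\exp(\tau TA)\exp(-\tau TA)=A ,
\]
which is the $A$-symplectic property. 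An equivalent check is that $Z(\tau)=YAY^\ast$ solves $Z'=ATZ-ZTA$ with $Z(0)=A$, and that the constant $A$ also solves it, since $ATA-ATA=0$. Uniqueness for the linear ODE then gives $Z\equiv A$.

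\emph{Reversibility preservation.} Using the two anticommutation relations,
\[
(AT)\mathcal{P}=A(T\mathcal{P})=-A\mathcal{P}T=\mathcal{P}AT ,
\]
so $AT$ commutes with $\mathcal{P}$, and so does every power $(AT)^k$. The one point needing care is that $\mathcal{P}$ is antilinear: it contains the conjugation $\varrho$, so $\mathcal{P}(cv)=\bar c\,\mathcal{P}v$. However, the coefficients $\tau^k/k!$ are real, so $\mathcal{P}$ passes through them unchanged. Since $\mathcal{P}$ is also continuous (real-linear and bounded on $\C^3$), it passes through the convergent series. Therefore $Y(\tau)\mathcal{P}=\sum_k \frac{\tau^k}{k!}(AT)^k\mathcal{P}=\mathcal{P}Y(\tau)$.

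The only step I expect to require attention is exactly this antilinearity bookkeeping. It is why $\tau\in\R$ is essential: for complex $\tau$ the conjugation would flip the coefficients, and the commutation would fail. It is also why both $A$ and $T$ must anticommute with $\mathcal{P}$, so that their product commutes with it. Everything else is the formal algebra above, and no convergence issues arise since all operators are $3\times3$ matrices.
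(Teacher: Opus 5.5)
Your proof is correct. The identity $(AT)^kA=A(TA)^k$ together with $\exp(\tau AT)^\ast=\exp(-\tau TA)$ (or equivalently your ODE-uniqueness check) gives $YAY^\ast=A$. The fact that $AT$ commutes with the antilinear $\mathcal{P}$, combined with the real coefficients $\tau^k/k!$, gives reversibility preservation. Your reading of ``reversible'' as anticommuting with $\mathcal{P}$ is also the one needed where the lemma is actually used, namely for $K=A^{(1)}_\mu B$ in Lemma~\ref{l:f2}.

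The paper gives no argument of its own and only cites \cite[Lemma~3.9]{MasperoRadakovic2024}. What you wrote is a complete, self-contained verification of that cited fact, so nothing is missing.
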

  \begin{proof}
      The proof is provided in \cite{MasperoRadakovic2024}[Lemma 3.9].
  \end{proof}
  
\begin{proposition}[Matrix representation of $\mathcal{Q}_{a,\mu}$ on $\mathscr{F}_{a,\mu}$]\label{p:1}
The action of the Hamiltonian and reversible operator $\mathcal{Q}_{a,\mu}$ on the symplectic and reversible basis 
$\psi_1^+(a,\mu),\psi_1^-(a,\mu),\psi_0(a,\mu)$ of $\mathscr{F}_{a,\mu}$ is represented by the $3\times 3$ reversible and Hamiltonian matrix

\begin{equation}
      Q_{a,\mu}=A_\mu T_{a,\mu},
\end{equation}

\no where $A_\mu$ is given in \eqref{e:m} and $T_{a,\mu}$ is the self-adjoint and reversibility-preserving matrix

\begin{equation}
      T_{a,\mu}= 
      \left( 
      \begin{array}{c|c}
\Lambda_1 & \Lambda_2 \\ 
\hline
\Lambda_2^\dagger & \Lambda_3
\end{array} 
\right).
\end{equation}

Here, $\Lambda_1$ is a $2\times 2$ self-adjoint matrix,

\begin{equation}
      \Lambda_1(N,a,\mu)=\begin{pmatrix}
            \mathcal{F}_1(N,a) -\mu^2\Lambda_{11}^{\mu}(N,\mu) & i\mu\Lambda_{12}(N,\mu) \\ 
           -i\mu\Lambda_{12}(N,\mu) & -\mu^2\Lambda_{11}^\mu(N,\mu)
      \end{pmatrix},
\end{equation}

\no and

\begin{equation}
      \Lambda_2(N,a,\mu)=\begin{pmatrix}
          \mathcal{F}_2(N,a)\\0
      \end{pmatrix}, 
      \qquad 
      \Lambda_3(N,a,\mu)=\jmath(\rho)-1+ \mathcal{F}_3(N,a)+\Lambda_{33}^\mu(N,\mu).
\end{equation}

The coefficients depend on the parity of $N$ as follows.

\medskip
\noindent
\textbf{(i) Even $N$.}  
For even $N$,
\begin{align*}
\mathcal{F
}_1(N,a)&=a^{2N-2}\Lambda_{11}^a(N)+\mathcal{O}(a^{2N-1}), 
\qquad 
\mathcal{F}_2(N,a)=a^{N-1}\Lambda_{13}(N)+\mathcal{O}(a^{2N-1}), 
\\
\mathcal{F}_3(N,a)&=a^{2N-2}\Lambda_{33}^a(N)+\mathcal{O}(a^{2N-1}),
\end{align*}
and the quantities $\Lambda_{ij}$ are given by
\begin{align}\nonumber
    \Lambda_{11}^a(N)=&2N(1-N)\left\langle\cos^Nz,\mathcal{D}_\rho^{-1}(\cos^Nz)\right\rangle +N^2\left\langle\cos^Nz,\dfrac{\langle\cos^Nz,1\rangle}{2(\jmath(\rho)-1)}\right\rangle,\\
    \Lambda_{11}^\mu(N,\mu)=&\sum_{m=1}^{N-1}\rho^{2m}\dfrac{\mu^{2m-2}}{2m!}\jmath^{(2m)}(\rho)+\mathcal{O}(\mu^{2N-3}),\\
    \Lambda_{12}(N,\mu)=& \rho \jmath^\prime(\rho)+\sum_{m=2}^{N-1}\rho^{2m-1}\dfrac{\mu^{2m-2}}{(2m-1)!}\jmath^{(2m-1)}(\rho)+\mathcal{O}(\mu^{2N-2}),\\
    \Lambda_{13}(N)=&-\dfrac{1}{\sqrt{2}}N\left<\cos^{N}{z},1\right>,\\
\Lambda_{33}^a(N)=&N\langle \cos^Nz,\mathcal{D}_\rho^{-1}(\cos^Nz)\rangle- \dfrac{1}{2}N(N-1)\langle\cos^{N-2}z,\mathcal{D}_\rho^{-1}(\cos^Nz)\rangle-\\&\dfrac{1}{2}N^2\langle\cos^{N-1}z,\mathcal{D}_\rho^{-1}(\cos^{N-1}z-\cos{z}\langle\cos^{N-1}{z},\cos{z}\rangle)\rangle,\\
\Lambda_{33}^\mu(N,\mu)=&-\sum_{m=1}^{N-1} \left(\frac{\rho^{2m}\mu^{2m}}{(2m)!}\jmath^{(2m)}(0)\right)+\mathcal{O}(\mu^{2N-1}).
\end{align}

\medskip
\noindent
\textbf{(ii) Odd $N$.}  
For odd $N$,
\[
\mathcal{F}_1(N,a)=-a^{N-1}\alpha\Lambda_{11}^a(N)+\mathcal{O}(a^N), 
\qquad 
\mathcal{F}_2(N,a)=\mathcal{O}(a^{N}), 
\qquad 
\mathcal{F}_3(N,a)=a^{N-1}\Lambda_{33}^a(N)+\mathcal{O}(a^N),
\]
and the quantities $\Lambda_{ij}$ are given by
\begin{align}\nonumber
    \Lambda_{11}^a(N)=&(N-1)\langle\cos^{N}z,\cos{z}\rangle,\\
    \Lambda_{11}^\mu(N,\mu)=&\sum_{m=1}^{(N-1)/2} \left(\frac{\rho^{2m}\mu^{2m-2}}{(2m)!}\jmath^{(2m)}(\rho)\right)+\mathcal{O}(\mu^N),\\
     \Lambda_{12}(N,\mu)=& \rho \jmath^\prime(\rho)+\sum_{m=1}^{(N-3)/2}\rho^{2m+1}\dfrac{\mu^{2m}}{(2m+1)!}\jmath^{(2m+1)}(\rho)+\mathcal{O}(\mu^N),\\
\Lambda_{33}^a(N)=&\alpha\langle\cos^Nz,\cos{z}\rangle-\dfrac{1}{2}\alpha N\langle\cos^{N-1}z,1\rangle,\\
     \Lambda^\mu_{33}(N,\mu)=&-\sum_{m=1}^{(N-1)/2} \left(\frac{\rho^{2m}\mu^{2m}}{(2m)!}\jmath^{(2m)}(0)\right)+\mathcal{O}(\mu^N).
\end{align}

\end{proposition}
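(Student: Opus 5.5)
The argument computes the nine entries of $T_{a,\mu}$ in \eqref{e:m111} directly. We insert the expansions of the basis from Lemma~\ref{l:b1} and of the wave from Theorem~\ref{t:1}, and expand everything jointly in $a$ and $\mu$. The structure ($Q_{a,\mu}=A_\mu T_{a,\mu}$ with $A_\mu$ as in \eqref{e:m}) was already derived before the statement from the $\mu$-symplectic property of the Kato basis (Lemma~\ref{l:b2}). So only the entries of $T_{a,\mu}$ remain.

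\textbf{Step 1: splitting $\mathcal{T}_{a,\mu}$.} We write
\[
\mathcal{T}_{a,\mu}=\big(c-\jmath(\rho)\big)+\big(\jmath(\rho)-e^{-i\mu z}\J_\rho e^{i\mu z}\big)-N\alpha\eta^{N-1}.
\]
The middle term is the Fourier multiplier $\jmath(\rho)-\jmath(\rho(n+\mu))$. On $e^{\pm iz}$ and on $1$ we Taylor expand it in $\mu$. This gives:
\begin{itemize}
\item the diagonal terms $-\mu^2\Lambda^\mu_{11}$, which come from the even derivatives $\jmath^{(2m)}(\rho)$;
\item the off-diagonal $\pm i\mu\Lambda_{12}$, which comes from the odd derivatives $\jmath^{(2m-1)}(\rho)$ and mixes $\cos z$ and $\sin z$, since $e^{\pm iz}$ pick up opposite signs;
\item $\Lambda_{33}^\mu$, which comes from $\jmath(\rho\mu)-\jmath(0)$ on the constant mode;
\item the constant $\jmath(\rho)-1$ in $\Lambda_3$.
\end{itemize}
The truncation orders ($N-1$ terms for even $N$, $(N-1)/2$ terms for odd $N$) are chosen only so that the neglected $\mu$-powers are beyond the $a$-orders being tracked.

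Reversibility (Lemma~\ref{l:b2} together with $\langle\Psi_1,\Psi_2\rangle=\overline{\langle\overline{\mathcal P}\Psi_1,\overline{\mathcal P}\Psi_2\rangle}$) makes the entries alternate between real and imaginary. Parity in $z$ kills the $(2,3)$ entry and all pure-$a$ contributions to the $(1,2)$ entry. The $(2,2)$ entry has no $a$-correction because $\sin z$ lies in the kernel of $\mathcal{T}_{a,0}$: it is proportional to $\eta'$ by translation invariance.

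\textbf{Step 2: the $a$-dependence at $\mu=0$.} We expand each entry using
\[
\eta^{N-1}=a^{N-1}\cos^{N-1}z+(N-1)a^{2N-2}\cos^{N-2}z\,\eta_N+\dots,
\]
$c-\jmath(\rho)=a^\tau c_\tau$, and the basis corrections of order $a^{N-1}$.
\begin{itemize}
\item \emph{Odd $N$.} The first contribution is at order $a^{N-1}$. In the $(1,1)$ entry, $c_{N-1}$ and $-N\alpha\langle\cos^{N-1}z\cos z,\cos z\rangle$ combine to $-(N-1)\alpha\langle\cos^Nz,\cos z\rangle$. The $(3,3)$ entry gives $c_{N-1}-\tfrac N2\alpha\langle\cos^{N-1}z,1\rangle$. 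The $(1,3)$ entry vanishes at this order, since $\cos^N z$ has zero mean for odd $N$.
\item \emph{Even $N$.} The $(1,3)$ entry is already nonzero at order $a^{N-1}$ and equals $-\tfrac{N}{\sqrt2}\langle\cos^Nz,1\rangle$. The diagonal entries first appear at order $a^{2N-2}$. To get them we must collect three kinds of terms: $c_{2N-2}$, the $\eta_N$-term of $\eta^{N-1}$, and the cross terms between the $a^{N-1}$ basis corrections and $\mathcal{T}$. We simplify these using self-adjointness of $\mathcal{D}_\rho$ and $\mathcal{D}_\rho^{-1}$ acting on the mean, which is where the $1/(2(\jmath(\rho)-1))$ comes from.
\end{itemize}

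\textbf{Main obstacle.} The main obstacle is this even-$N$ bookkeeping at order $a^{2N-2}$. I would use the identity $\mathcal{T}_{a,0}\psi=\mathcal{D}_\rho\psi-N\alpha a^{N-1}\cos^{N-1}z\,\psi+\dots$, so that the basis correction $a^{N-1}\mathcal{D}_\rho^{-1}(\cdot)$ exactly cancels the first-order forcing. The surviving quadratic form is then $-\langle \mathcal{D}_\rho^{-1}f,f\rangle$-type terms, which I would match against $\Lambda^a_{11}$ and $\Lambda^a_{33}$. The mixed $a^{N-1}\mu$ terms are pushed into the error using the $\mathcal{O}(a^{N-1}\mu)$ remainders of Lemma~\ref{l:b1}.
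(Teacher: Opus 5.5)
Your proposal is correct and follows essentially the same route as the paper's proof in Appendix~\ref{a:4}. Both split $\mathcal{T}_{a,\mu}$ into the $\mu$-multiplier $\jmath(\rho)-e^{-i\mu z}\J_\rho e^{i\mu z}$ and the $a$-part $c-\jmath(\rho)-N\alpha\eta^{N-1}$, insert the basis of Lemma~\ref{l:b1}, and collect the $a^{N-1}$ (odd $N$) or $a^{2N-2}$ (even $N$) contributions using self-adjointness of $\mathcal{D}_\rho^{-1}$ and its action on the mean mode.

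The only real difference is your shortcut for the vanishing $a$-correction of the $(2,2)$ entry; the paper instead checks this by direct cancellation via \eqref{e:e45}--\eqref{e:e46}. Stated precisely, the fact you need is not $\sin z\in\ker\mathcal{T}_{a,0}$ but $\psi_1^-(a,0)\in\mathbb{R}\eta'$. This holds because $\eta'\in\mathscr{F}_{a,0}$ is real and odd, and $\mathcal{U}_{a,0}$ is real and reversibility preserving, so the real odd part of $\mathscr{F}_{a,0}$ is spanned by $\psi_1^-(a,0)$.
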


\begin{proof}
    The proof is provided in Appendix \ref{a:4}.
\end{proof}

Next, we consider the block-diagonalization of the matrix $Q_{a,\mu}$.
If $N$ is odd, $Q_{a,\mu}$ is already block-diagonal.  However, for even $N$ this is not the case. We develop a block-diagonalization for $Q_{a,\mu}$ when $N$ is even.  Lengthy calculations give rise to  following theorem.

\begin{theorem}[Block-diagonal Matrix for even $N$]
For even $N$ and sufficiently small $a$ and $\mu$, the operator $\mathcal{Q}_{a,\mu}$ can be represented by a $3\times3$ block diagonal matrix $Q_{a,\mu}$ as

\beq
Q_{a,\mu}=
\begin{pmatrix}
\begin{array}{c|c}
        \Gamma_{1} & 0\\
        \hline
        0 & \Gamma_{2}
        \end{array}
    \end{pmatrix}.
\eeq
    
\no  Here, $\Gamma_1$ is a $2\times 2$ matrix given by

\beq \Gamma_1=\mu\begin{pmatrix}
    -i\Lambda_{12}(N,\mu)+\mathcal{O}(a^{2N-2},\mu^{2N-2})& \Lambda_{b}(N,\mu)+\mathcal{O}(a^{2N-2})\\\Lambda_f(N)a^{2N-2}-\mu^2\Lambda_b(N,\mu)+\mathcal{O}(a^{2N-1})& -i\Lambda_{12}(N,\mu)+\mathcal{O}(a^{2N-2},\mu^{2N-2})
\end{pmatrix},
\eeq

\no and 

\beq
\Gamma_2=i\mu\left(\Lambda_3(N,a,\mu)+\mathcal{O}( a^{2N-1})\right),
\eeq 

\no with

\beq
\Lambda_f(N)=-\Lambda_{11}^a(N)+\dfrac{\Lambda^2_{13}(N)}{\widetilde{\Lambda}_d},\quad \Lambda_b(N,\mu)=-\Lambda_{12}(N,\mu)-\Lambda_{11}^\mu(N,\mu),
\eeq

\beq\label{e:del}
\widetilde{\Lambda}_d=\rho \jmath^\prime(\rho)+\jmath(\rho)-1+ \mathcal{O}(a^2,a\mu,\mu^2).
\eeq

\no The expressions for $\Lambda_{11}^a$, $\Lambda_{11}^{\mu}$, $\Lambda_{12}$ and $\Lambda_{3}$ are provided above in Proposition \ref{p:1}.
\end{theorem}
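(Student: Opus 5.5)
The plan is to follow the block-decoupling strategy of Berti, Maspero \& Ventura and of Maspero \& Radakovic: construct a symplectic, reversibility-preserving change of basis close to the identity that removes the coupling block $\Lambda_2$ in $T_{a,\mu}$. By Proposition~\ref{p:1}, for even $N$ the only off-diagonal entry of $T_{a,\mu}$ between the $\{\psi_1^+,\psi_1^-\}$ block and $\psi_0$ is $\mathcal{F}_2(N,a)=a^{N-1}\Lambda_{13}(N)+\mathcal{O}(a^{2N-1})$, sitting in the $(1,3)$ slot. Write $Q_{a,\mu}=A_\mu T_{a,\mu}$ and look for a transformation $\exp(S)$, with $S=A_\mu Y$, $Y$ self-adjoint and reversible. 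By Lemma~\ref{l:Asym}, $\exp(S)$ is then $A_\mu$-symplectic and reversibility preserving. The conjugated matrix $\exp(S)^{-1}Q_{a,\mu}\exp(S)$ therefore again has the form $A_\mu T^{+}$ with $T^{+}$ self-adjoint and reversible, so the alternating real/imaginary pattern of the entries survives.

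First I would factor out $\mu$. Since $A_\mu$ has the block $\begin{pmatrix} i\mu & 1\\ -1 & i\mu\end{pmatrix}$ and the third diagonal entry $i\mu$, the product $A_\mu T_{a,\mu}$ has lower row $i\mu(\mathcal{F}_2,0,\Lambda_3)$ and upper rows containing $\mathcal{F}_2$ through both $i\mu$ and the constant $\pm1$. The single coupling entry to kill is essentially $-\mathcal{F}_2$ in position $(2,3)$. I would take $Y$ with one real parameter $y$ in the $(1,3)$, $(3,1)$ entries (reversibility forces this entry to be real). The first-order commutator equation $[S,Q^{(0)}]=$ coupling then becomes a scalar linear equation. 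Its solvability hinges on the nonzero gap between the $\psi_0$-block, whose $T$-entry is $\jmath(\rho)-1$, and the $\psi_1^+$ entry of $T$. Concretely, the denominator comes out as $\widetilde\Lambda_d=\rho\jmath'(\rho)+\jmath(\rho)-1+\mathcal{O}(a^2,a\mu,\mu^2)$, where the $\rho\jmath'(\rho)$ arises from $\Lambda_{12}$ in the off-diagonal of $\Lambda_1$. This gives $y=\mathcal{F}_2/\widetilde\Lambda_d=\mathcal{O}(a^{N-1})$. I assume $\widetilde\Lambda_d\neq0$, which is a non-resonance condition implicit in the statement.

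Second, I would expand $\exp(-S)Q\exp(S)=Q+[Q,S]+\tfrac12[[Q,S],S]+\dots$ and track orders. Since $S=\mathcal{O}(a^{N-1})$, the quadratic term $\tfrac12[[Q,S],S]$ contributes at order $a^{2N-2}$ to the $(1,1)$ entry of $T$. This produces the correction $\Lambda_{13}^2/\widetilde\Lambda_d$ that, combined with $-\Lambda_{11}^a$, yields $\Lambda_f(N)$ in the lower-left entry of $\Gamma_1$. Explicitly, $\mu$ times $(\mathcal{F}_1-\mu^2\Lambda_{11}^\mu)$ becomes $\mu(\Lambda_f a^{2N-2}-\mu^2\Lambda_b)$ after also collecting the $-\Lambda_{12}$ contribution. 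The residual coupling after one step is $\mathcal{O}(a^{2N-1})$. It can be either removed by a second, smaller conjugation or, as the statement allows, absorbed into the error terms: a further transformation of size $\mathcal{O}(a^{2N-1})$ only perturbs the diagonal blocks at order $a^{2N-1}$ and beyond. The remaining entries of $\Gamma_1$ follow by factoring $\mu$. The diagonal is $-i\mu\Lambda_{12}$ from $A_\mu\Lambda_1$, and the upper-right entry is $\mu\Lambda_b$, where the $\mu^2\Lambda_{11}^\mu$ and $\mu\Lambda_{12}$ terms combine. Meanwhile $\Gamma_2=i\mu(\Lambda_3+\mathcal{O}(a^{2N-1}))$, because the $\mathcal{O}(a^{2N-2})$ correction to $\Lambda_3$ is already contained in $\mathcal{F}_3$.

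The main obstacle I expect is the bookkeeping showing that every entry of the transformed matrix keeps an overall factor $\mu$. This requires verifying that the cancellation of the $\mu$-independent coupling (coming from the constant entries of $A_\mu$) is exact at order $a^{N-1}$. It also requires showing that the remaining error terms are $\mu\,\mathcal{O}(\cdot)$ rather than merely $\mathcal{O}(\cdot)$. I would handle this by using the structural fact that $Q_{a,0}$ annihilates $\psi_0$ and has range orthogonal to $\psi_0$, so the $\mu=0$ matrix has zero third row and column. I would then verify that this property is preserved by the conjugation: $S$ maps into the appropriate subspaces, so any term without a factor of $\mu$ vanishes identically by analyticity in $\mu$.
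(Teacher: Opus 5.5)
The gap is that you try to decouple $Q_{a,\mu}=A_\mu T_{a,\mu}$ in the original coordinates. You skip the $\mu$-dependent rescaling that the paper performs first, and the device you propose for recovering the overall factor $\mu$ rests on a false claim.

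In the original coordinates, the $\{\psi_1^+,\psi_1^-\}$ block of $A_\mu T_{a,\mu}$ has $(1,2)$ entry $i\mu\cdot i\mu\Lambda_{12}-\mu^2\Lambda_{11}^\mu=\mu^2\Lambda_b$, not $\mu\Lambda_b$. Its $(2,1)$ entry is $-\mathcal{F}_1-\mu^2\Lambda_b$, with no factor $\mu$. At $\mu=0$, the third column of $Q_{a,0}=A_0T_{a,0}$ is $(0,-\mathcal{F}_2,0)^T$. In particular, for even $N$ the operator $\mathcal{Q}_{a,0}$ does not annihilate $\psi_0$; only its range is orthogonal to $\psi_0$. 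Directly, $\mathcal{Q}_{a,0}1=-N\alpha\,\partial_z\eta^{N-1}$ has $\sin z$ component $N\alpha a^{N-1}\langle\cos^Nz,1\rangle\sin z+\mathcal{O}(a^N)\neq0$, consistent with $\mathcal{Q}_{a,0}\psi_0=-\mathcal{F}_2\,\psi_1^-$. So the $\mu$-free terms do not vanish, and the $\Gamma_1$ of the statement is not the corresponding block in your basis.

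The paper first conjugates by $L=\operatorname{diag}(\mu^{1/2},\mu^{-1/2},\mu^{1/2})$. This gives $L^{-1}Q_{a,\mu}L=\mu A^{(1)}_\mu T^{(1)}_{a,\mu}$, where $A^{(1)}_\mu$ has rows $(i,1,0)$, $(-1,i\mu^2,0)$, $(0,0,i)$. The matrix $T^{(1)}_{a,\mu}=L^\ast T_{a,\mu}L/\mu$ is analytic precisely because $T_{12}=i\mu\Lambda_{12}$ and $T_{22}=-\mu^2\Lambda^\mu_{11}$. The overall factor $\mu$, and the entries $\mu\Lambda_b$ and $\mu(\Lambda_f a^{2N-2}-\mu^2\Lambda_b)$, come from this step.

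This is not cosmetic, because it controls the size of the generator. In your frame the diagonal blocks are $\mathcal{O}(\mu)$, apart from the small $-\mathcal{F}_1$, whereas the coupling $-\mathcal{F}_2$ in slot $(2,3)$ carries no $\mu$. Solving the homological equation therefore forces $Y_{13}\approx i\mathcal{F}_2/(\mu\widetilde{\Lambda}_d)$, which is your $y$ multiplied by $i/\mu$; this is $L^{-\ast}BL^{-1}$ for the paper's $B$. Thus $S=A_\mu Y$ is not $\mathcal{O}(a^{N-1})$. Your order counting of the Lie series (quadratic term at $a^{2N-2}$, residual at $a^{2N-1}$) fails exactly in the regime $\mu\lesssim a^{N-1}$ where the figure eight lives.

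Moreover, one parameter is not enough. After rescaling, the homological equation is the $2\times2$ system $(\widetilde{A}_\mu\Lambda_1^{(\mathrm{I})}-i\Lambda_3^{(\mathrm{I})})\widetilde{A}_\mu\Upsilon=\widetilde{A}_\mu\Lambda_2^{(\mathrm{I})}$, with determinant $-\widetilde{\Lambda}_d^2+\dots$. Its unique solution $\Upsilon\propto(-i\widetilde{\Lambda}_d,\widetilde{\Lambda}_b)^T$ has both components nonzero, the first imaginary and the second real. So reversibility makes the $(1,3)$ entry of $Y$ imaginary, not real, and dropping the $(2,3)$ entry leaves an $\mathcal{O}(a^{N-1})$ coupling.

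Once the rescaling and the two-component $\Upsilon$ are in place, the rest of your plan is the paper's argument. That includes invertibility via $\widetilde{\Lambda}_d\neq0$, and the term $\tfrac12[K,N^{(1)}]$ adding $-\Lambda_{13}^2a^{2N-2}/\widetilde{\Lambda}_d$ to the $(1,1)$ entry of $T$, hence producing $\Lambda_f$. It also includes the $\mathcal{O}(a^{3N-3})$ remainder.
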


\begin{proof}
    The proof is presented in Appendix \ref{a:5}.
\end{proof}

\vs

{\bf Example.} {\em For the focusing modified KdV equation with $\alpha=-1$, $N=3$, $\jmath(k)=1+k^2$, the $3\times 3$ reversible and Hamiltonian matrix $Q_{a,\mu}$ is given by

\[
Q_{a,\mu}=\begin{pmatrix}
    -2i\rho^2\mu&-3\rho^2\mu^2&0\\ -\dfrac{3 a^2}{2}+3\rho^2\mu^2&-2i\rho^2\mu&0\\ 0&0&i\rho^2\mu
\end{pmatrix}+\mathcal{O}(a^3,a^2\mu,\mu^3).
\]
}

\vs

{\bf Example.} {\em With $N=2$, including for the Whitham equation, the $3\times 3$ reversible and Hamiltonian matrix $Q_{a,\mu}$ is

\[
Q_{a,\mu}=
\mu\begin{pmatrix}
-i\rho\jmath'(\rho) &
-\rho\jmath'(\rho)-\rho^2\jmath''(\rho)/2 &
0 \\

g_{21}
&
-i\rho\jmath'(\rho)&
0 \\

0 & 0 & i(\jmath(\rho)-1)
\end{pmatrix}+\mathcal{O}(a^4,a^3\mu,\mu^4),
\]

\no where 

\[
g_{21}=
a^2\left(
\begin{aligned}
\frac{\rho\jmath'(\rho)+3\jmath(\rho)-2\jmath(2\rho)-1}
{(\jmath(\rho)-\jmath(2\rho))(\rho\jmath'(\rho)+\jmath(\rho)-1)}
\end{aligned}
\right)
+\mu^2\left(\rho\jmath'(\rho)+\frac{\rho^2}{2}\jmath''(\rho)\right).
\]

}
\vs

\no The block-diagonal representation of the matrix $Q_{a,\mu}$ is given by

\beq
Q_{a,\mu}=
\begin{pmatrix}
\begin{array}{c|c}
        \Gamma_{1} & 0\\
        \hline
        0 & \Gamma_{2}
        \end{array}
    \end{pmatrix},
\eeq

\no for $N$ both even and odd. This allows us to analyze the spectrum of the operator $\mathcal{Q}_{a,\mu}$ through the eigenvalues of the blocks $\Gamma_1$ and $\Gamma_2$. Since $\Gamma_2$ is a scalar multiple of $i\mu$, its eigenvalue

\beq
\lambda_0(a,\mu)=i\mu\Lambda_3(N,a,\mu), 
\eeq

\no is purely imaginary for sufficiently small $a$ and $\mu$, and therefore it does not lead to a spectral instability. Consequently, the modulational instability of periodic traveling waves of \eqref{e:gkdv} is determined by the eigenvalues of the $2\times2$ matrix $\Gamma_1$. Ignoring higher-order terms for clarity, $\Gamma_1$ has the structure
\[
\Gamma_1 \approx
\mu
\begin{pmatrix}
-i\Lambda_{12}(N,\mu) & \Lambda_b(N,\mu) \\
\Lambda_f(N)a^{2N-2}-\mu^2\Lambda_b(N,\mu) & -i\Lambda_{12}(N,\mu)
\end{pmatrix},
\]

\no for even $N$, while for odd $N$ it takes the form

\[
\Gamma_1 \approx
\begin{pmatrix}
-i\mu\Lambda_{12}(N,\mu) & \mu^2\Lambda_b(N,\mu) \\
-\alpha\Lambda_{11}^a(N)a^{N-1}-\mu^2\Lambda_b(N,\mu) & -i\mu\Lambda_{12}(N,\mu)
\end{pmatrix}.
\]

\no In both cases, the characteristic polynomial has the same leading-order structure, yielding the eigenvalues

\beq
\lambda^\pm_1(a,\mu)
=
-i\mu\Lambda_{12}(N,\mu)
\pm
\mu\sqrt{\lambda_N\Lambda_b(N,\mu)-\mu^2\Lambda_b^2(N,\mu)},
\eeq

\no where $\lambda_N=\Lambda_f(N)a^{2N-2}$ for even $N$ and $\lambda_N=-\alpha\Lambda_{11}^a(N)a^{N-1}$ for odd $N$. Instabilities originate from $\lambda^\pm_1(a,\mu)$ having a nonzero real part, i.e., when the radicand $\lambda_N\Lambda_b(N,\mu)-\mu^2\Lambda_b^2(N,\mu)$ is positive. 


\section{Main Results}\label{sec:results}
In this section, we state the main results concerning the modulational instability of the small-amplitude periodic traveling waves of the generalized KdV equation \eqref{e:gkdv}. As shown in the previous section, instability occurs when the radicand in the eigenvalue formula becomes positive. This observation leads to the introduction of the Whitham–Benjamin coefficient, whose sign determines the modulational stability or instability of the periodic waves.

\subsection{Results for $N$ even}

\no We define $\Delta_{\rm{even}}(N)$, the Whitham–Benjamin coefficient for the generalized KdV equation \eqref{e:gkdv} with even nonlinearity power $N$:

\beq\label{e:del1}
\Delta_{\rm{even}}(N)=-\left(\rho\jmath^\prime(\rho)+\dfrac{1}{2}\rho^2\jmath^{\prime\prime}(\rho)\right)\Lambda_f(N)=\widetilde{\Lambda}_b\Lambda_f(N), 
\eeq

\no which defines $\tilde\Lambda_b$, the leading-order part of $\Lambda_b$, omitting higher-order dependence on $\mu$. Our main result provides a criterion for the modulational instability of periodic traveling waves of \eqref{e:gkdv} in this setting.

\begin{theorem}
    For even $N$, a $2\pi/\rho$-periodic traveling wave of the generalized KdV equation \eqref{e:gkdv} of sufficiently small amplitude is susceptible to the modulational or Benjamin-Feir instability if 
    \beq\label{e:wb}
    \Delta_{\rm{even}}>0.
    \eeq
\end{theorem}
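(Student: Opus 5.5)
The plan is to read the instability off the spectrum of the block $\Gamma_1$ from the block-diagonalization theorem for even $N$. The eigenvalues of $\mathcal{Q}_{a,\mu}$ in $\operatorname{spec}_0$ are the eigenvalues of $Q_{a,\mu}$, by Lemma~\ref{l:31} and Proposition~\ref{p:1}. Since $\Gamma_2=i\mu\Lambda_3$ is purely imaginary, it suffices to show that $\Gamma_1$ has an eigenvalue with positive real part for some small $\mu\neq0$ whenever $\Delta_{\rm even}>0$ and $0<|a|\ll1$.

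Write $\Gamma_1=\mu\begin{pmatrix} d_1 & b\\ c & d_2\end{pmatrix}$. Here $d_{1,2}=-i\Lambda_{12}+r_{1,2}$, with $r_{1,2}=\mathcal{O}(a^{2N-2},\mu^{2N-2})$. We also have $b=\Lambda_b+\mathcal{O}(a^{2N-2})$ and $c=\Lambda_f a^{2N-2}-\mu^2\Lambda_b+\mathcal{O}(a^{2N-1})$.

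First I will use the reversible Hamiltonian structure. By Definition~\ref{d:3} and Lemma~\ref{l:Asym}, the entries of $T_{a,\mu}$ alternate between real and imaginary. Hence in $\mu^{-1}\Gamma_1$ the diagonal entries are purely imaginary and the off-diagonal entries are real, so $r_1,r_2\in i\R$.

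The eigenvalues are then
\[
\lambda_1^\pm=\mu\Bigl(\tfrac{d_1+d_2}{2}\pm\sqrt{\bigl(\tfrac{d_1-d_2}{2}\bigr)^2+bc}\Bigr).
\]
The first term is purely imaginary. The quantity $((d_1-d_2)/2)^2$ is real and nonpositive, and it is of size $\mathcal{O}(a^{4N-4},\mu^{4N-4})$. Therefore $\operatorname{Re}\lambda_1^\pm\neq0$ exactly when $D:=bc+((d_1-d_2)/2)^2>0$.

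Next I will evaluate $D$ in the scaling regime $\mu=\kappa a^{N-1}$ with $\kappa>0$ fixed. Using $\Lambda_b(N,\mu)=\widetilde\Lambda_b+\mathcal{O}(\mu^2)$, this gives
\[
D=a^{2N-2}\bigl(\widetilde\Lambda_b\Lambda_f-\kappa^2\widetilde\Lambda_b^2\bigr)+\mathcal{O}(a^{2N-1}).
\]
The remaining error terms are
\begin{itemize}
\item $\mathcal{O}(a^{2N-2})\cdot c=\mathcal{O}(a^{4N-4})$,
\item $\Lambda_b\cdot\mathcal{O}(a^{2N-1})$,
\item the $\mathcal{O}(\mu^2)\cdot a^{2N-2}$ corrections,
\item the diagonal-difference term.
\end{itemize}
All of these are $o(a^{2N-2})$. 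Since $\Delta_{\rm even}=\widetilde\Lambda_b\Lambda_f>0$, we must have $\widetilde\Lambda_b\neq0$. Choosing $0<\kappa^2<\Lambda_f/\widetilde\Lambda_b$ (which is positive) makes the leading coefficient strictly positive. Then $D>0$ for all sufficiently small $a\neq0$.

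It follows that $\operatorname{Re}\lambda_1^+=\mu\sqrt{D}>0$, so this is an element of $\operatorname{spec}_{L^2(\R)}(\mathcal{Q}_a)$ with positive real part by \eqref{e:spe}. This is the Benjamin--Feir instability. Letting $\kappa$ range over $(0,\sqrt{\Lambda_f/\widetilde\Lambda_b})$ traces the unstable band emanating from the origin.

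The main obstacle is controlling the error terms uniformly. In particular, the block-diagonalization remainders must be genuinely $o(a^{2N-2})$ along $\mu\sim a^{N-1}$. The diagonal remainder $\mathcal{O}(a^{2N-2},\mu^{2N-2})$ is harmless only because reversibility forces it to be imaginary, so it enters $D$ quadratically. I would therefore first verify this reality structure carefully from the $\overline{\mathcal{P}}$-symmetry of the basis in Lemma~\ref{l:b1}, and confirm that the block-diagonalizing transformation in Appendix~\ref{a:5} preserves it via Lemma~\ref{l:Asym}.
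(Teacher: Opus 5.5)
Your proposal is correct and follows the same route as the paper. The paper simply observes that when $\Delta_{\rm even}>0$, the leading term $\Lambda_f\Lambda_b a^{2N-2}$ of the radicand in $\lambda_1^\pm$ (the eigenvalues of $\Gamma_1$) is positive, so the radicand stays positive for a range of small $\mu$; your version makes this quantitative by fixing $\mu=\kappa a^{N-1}$ with $0<\kappa^2<\Lambda_f/\widetilde{\Lambda}_b$ and using the reversible structure of $\Gamma_1$ to check that the remainders cannot change the sign of the discriminant.
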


Indeed, if $\Delta_{\rm{even}}>0$, the first term in the radicand is positive, implying the existence of a range of $\mu$ values for which the whole radicand is positive. 

\vs 

{\bf Example.} {\em For $N=2$, 

\beq
\Delta_{\rm{even}}=\dfrac{\left(\rho\jmath^\prime(\rho)+\frac{1}{2}\rho^2\jmath^{\prime\prime}(\rho)\right)(\rho\jmath^\prime(\rho)+3\jmath(\rho)-2\jmath(2\rho)-1)}{(\jmath(2\rho)-\jmath(\rho))(\rho\jmath^\prime(\rho)+\jmath(\rho)-1)},
\eeq

\no which agrees with \cite{Hur2015ModulationalWaves,MasperoRadakovic2024}. For the Whitham equation, $\jmath(\rho)=\sqrt{\tanh(\rho)/\rho}$. 
Since $\jmath(\rho)<1$ and $\jmath'(\rho)<0$ for $\rho>0$, we have
\[
\rho \jmath'(\rho) + \jmath(\rho) - 1 < 0, \qquad 
\rho \jmath'(\rho) + \frac{1}{2} \rho^2 \jmath''(\rho) < 0.
\]

The remaining factor in $\Delta_{\rm even}$,
\[
\rho \jmath'(\rho) + 3\jmath(\rho) - 2\jmath(2\rho) - 1,
\]
vanishes at a unique critical value $\rho_c \approx 1.146\ldots$ and is negative for $\rho>\rho_c$. 
Hence, $\Delta_{\rm even}>0$ for $\rho>\rho_c$, indicating modulational instability, as observed numerically \cite{Hur2015ModulationalWaves}
.} 



\vs

Next, we discuss the topology of the unstable spectrum near the origin. 

\begin{theorem}[Benjamin-Feir unstable eigenvalues for even $N$]
   Assume $\Delta_{\rm{even}}>0$. For sufficiently small $a>0$, there exists a function $\mu_*(a)>0$ satisfying
   
\beq\label{picardeven}
\Delta_{\rm{even}}(N)a^{2N-2}-\mu_*^2\Lambda_b^2(N,\mu_*)=\mathcal{O}(a^{2N-1},a^{2N-1}\mu_*,\mu_*^{2N-1}),
\eeq
   
\no so that the Benjamin-Feir eigenvalues of the spectrum of the operator $\mathcal{Q}_{a,\mu}$ are given by

\begin{align}\label{evenlambda}
   \lambda^\pm_1(a,\mu)=-i\mu\Lambda_{12}(N,\mu)+\mathcal{O}(a^{2N-2}\mu,\mu^{2N-1})\pm\mu\sqrt{\Delta_{\rm{BF}}(N,a,\mu)}, 
   \end{align}

   \no for $\mu\in [0, \mu_*]$, with

   \beq
\Delta_{\rm{BF}}(N,a,\mu)=\Delta_{\rm{even}}(N)a^{2N-2}-\mu^2\Lambda_b^2(N,\mu)+\mathcal{O}(a^{2N-1},a^{2N-1}\mu,\mu^{2N-1}).
   \eeq

\no To leading order, $\mu_*(a)=\Delta_{\rm even}^{1/2}(N)a^{N-1}/{\tilde \Lambda_b}$. Equation \eqref{picardeven} may be used to determine $\mu_*(a)$ to higher order by Picard iteration.

\end{theorem}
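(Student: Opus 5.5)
Starting from the block-diagonal representation for even $N$ (Theorem on the block-diagonal matrix), the Benjamin--Feir eigenvalues are exactly the eigenvalues of the $2\times 2$ block $\Gamma_1$. The block $\Gamma_2$ contributes only the purely imaginary eigenvalue $i\mu\Lambda_3$. I write $\Gamma_1=\mu M$, where
\[
M=\begin{pmatrix} -i\Lambda_{12}+r_{11} & \Lambda_b+r_{12}\\ \Lambda_f a^{2N-2}-\mu^2\Lambda_b+r_{21} & -i\Lambda_{12}+r_{22}\end{pmatrix}.
\]
The remainders $r_{ij}$ carry the orders stated in that theorem. The eigenvalues of $M$ are
\[
\tfrac12\operatorname{tr}M\pm\sqrt{\tfrac14(\operatorname{tr}M)^2-\det M}.
\]
The trace gives $-i\Lambda_{12}(N,\mu)$ plus $\mathcal{O}(a^{2N-2},\mu^{2N-2})$; multiplying by $\mu$ produces the stated imaginary part and its error. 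Reversibility and the Hamiltonian structure (Lemma~\ref{l:Asym}, and the alternating real/imaginary pattern of $T_{a,\mu}$) force the diagonal remainders to be purely imaginary. Hence $r_{11}-r_{22}$ is imaginary, and the discriminant $\tfrac14(r_{11}-r_{22})^2+(\Lambda_b+r_{12})(\Lambda_f a^{2N-2}-\mu^2\Lambda_b+r_{21})$ is real. This reality is what lets $\pm\mu\sqrt{\cdot}$ be either real (instability) or imaginary.

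Next I expand the radicand. I replace $\Lambda_b(N,\mu)$ in the product $\Lambda_b\Lambda_f a^{2N-2}$ by its $\mu$-independent leading part $\widetilde\Lambda_b$, which by \eqref{e:del1} gives $\Delta_{\rm even}(N)a^{2N-2}$. The correction $(\Lambda_b-\widetilde\Lambda_b)\Lambda_f a^{2N-2}=\mathcal{O}(\mu^2a^{2N-2})$ is absorbed into the error. The cross terms $r_{12}\cdot(\cdots)$ and $\Lambda_b r_{21}$ contribute $\mathcal{O}(a^{2N-1})$ and $\mathcal{O}(a^{2N-1}\mu)$. The $\mu$-expansions of $\Lambda_{11}^\mu,\Lambda_{12}$ in Proposition~\ref{p:1} are truncated at order $\mu^{2N-3}$ resp.\ $\mu^{2N-2}$, so the $\mu$-remainder is $\mathcal{O}(\mu^{2N-1})$. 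The squared diagonal difference is of higher order: it is $\mathcal{O}(a^{4N-4},\mu^{4N-4})$ and is absorbed. This yields $\Delta_{\rm BF}$ and formula \eqref{evenlambda}.

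For $\mu_*$, I define $F(a,\mu)=\Delta_{\rm BF}(N,a,\mu)$ and rescale $\mu=a^{N-1}\nu$. Then $a^{-(2N-2)}F=\Delta_{\rm even}-\nu^2\widetilde\Lambda_b^2+\mathcal{O}(a)$, and the remainder is analytic in $(a,\nu)$ because all entries depend analytically on $a,\mu$. At $a=0$ the root is $\nu_0=\Delta_{\rm even}^{1/2}/|\widetilde\Lambda_b|$, where $\Delta_{\rm even}>0$ is used. The $\nu$-derivative there is $-2\nu_0\widetilde\Lambda_b^2\neq 0$. The implicit function theorem then gives an analytic $\nu(a)$, hence $\mu_*(a)=a^{N-1}\nu(a)>0$ satisfying \eqref{picardeven}. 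The same contraction, written as $\mu=\widetilde\Lambda_b^{-1}(\Delta_{\rm even}a^{2N-2}+\text{remainder})^{1/2}$, is the Picard iteration mentioned in the statement. For $\mu\in[0,\mu_*]$ the rescaled radicand is monotone in $\nu$ to leading order, so $\Delta_{\rm BF}\ge0$ and $\lambda_1^\pm$ have nonzero real parts $\pm\mu\sqrt{\Delta_{\rm BF}}$, which is the figure-eight lobe.

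The main obstacle is bookkeeping the error terms so that none of them dominates the leading balance $\Delta_{\rm even}a^{2N-2}\sim\mu^2\widetilde\Lambda_b^2$. In the scaling $\mu\sim a^{N-1}$ one must verify that every remainder (e.g.\ $\mathcal{O}(\mu^{2N-1})=\mathcal{O}(a^{(N-1)(2N-1)})$) is $o(a^{2N-2})$. One must also confirm, via the reversibility structure, that the discriminant stays real, since a complex error would destroy the sign dichotomy.
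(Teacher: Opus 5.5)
Your proposal is correct and is essentially the paper's argument. Both read the Benjamin--Feir eigenvalues off the trace and discriminant of the $2\times 2$ block $\Gamma_1=\mu M$, replace $\Lambda_b\Lambda_f a^{2N-2}$ by $\widetilde{\Lambda}_b\Lambda_f a^{2N-2}=\Delta_{\rm even}(N)a^{2N-2}$, and take $\mu_*$ as the positive zero of the radicand; your rescaling $\mu=a^{N-1}\nu$ with the implicit function theorem, and your reversibility argument that the discriminant is real, make explicit what the paper leaves to ``Picard iteration'' and to the real/imaginary pattern of $T_{a,\mu}$.

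One caveat comes from the block-diagonal theorem you quote, not from your own reasoning: at $a=0$ the eigenvalues of $\Gamma_1$ are $i\Omega_{\pm 1,\mu}$, whose mean is $-i\mu\bigl(\Lambda_{12}(N,\mu)+\mu^2\Lambda_{11}^\mu(N,\mu)\bigr)$, so for even $N\geq 4$ with $\jmath''(\rho)\neq 0$ the imaginary part of $\lambda_1^\pm$ carries an $\mathcal{O}(\mu^3)$ term that the stated $\mathcal{O}(\mu^{2N-1})$ error does not cover, although this term cancels in $M_{11}-M_{22}$ and leaves $\Delta_{\rm BF}$ and $\mu_*$ unchanged.
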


From \eqref{e:spe}, the unstable spectrum of $\mathcal{Q}_a$ near the origin is fully characterized by the two curves $\mu \mapsto \lambda^\pm_1(a,\mu)$. For even $N$, as $\mu$ varies within the interval $[0, \mu_*]$, these curves form a closed figure resembling the shape of an ``8''. The upper part of this figure arises when $\Delta_{\rm{even}}-\mu^2\Lambda_b^2 > 0$ for $\mu \in [0, \mu_*)$, while the lower part appears for $\mu < 0$. 
Indeed, ignoring the $\mathcal{O}$ contributions in \eqref{evenlambda}, to leading order the unstable spectrum near the origin is given by a Lemniscate of Gerono or Huygens~\cite{lawrence}: with $\mu\in [-\mu_*, \mu_*]$, 

\begin{align*}
p&=\mu \sqrt{\Delta_{\rm{even}}a^{2N-2}-\mu^2\tilde \Lambda_b^2},\\
q&=-\mu \Lambda_{12},
\end{align*}

\no where $p=\mbox{Re} \lambda^\pm_1(a,\mu)$, $q=\mbox{Im} \lambda^\pm_1(a,\mu)$. Eliminating $\mu$, 

\[
p^2=\frac{q^2}{(\rho\jmath^\prime(\rho))^2}\left(\Delta_{\rm{even}}a^{2N-2}-\frac{q^2}{(\rho\jmath^\prime(\rho))^2}\widetilde{\Lambda}_b^2\right). 
\]

\no The top of this figure eight is given by 

\[
q_{\rm{max}}=\frac{|\rho\jmath^\prime(\rho)|a^{N-1}\Delta_{\rm{even}}^{1/2}}{|\widetilde{\Lambda}_b|}, 
\]

\no and its width

\[
p_{\rm{max}}-p_{\rm{min}}=\frac{\Delta_{\rm{even}}a^{2N-2}}{|\widetilde{\Lambda}_b|}. 
\]

\no The coefficient $\Delta_{\rm{even}}$ depend on $N$ as well, but not exponentially. Thus these results show that for solutions of small amplitude $a$, the figure-eight curves shrink in size exponentially fast with $N$, with the width shrinking quadratically faster than the height.  
The shrinking of the figure eights for even degree nonlinearity $N$ is illustrated for the case of the Whitham equation with $N=2$ and $N=4$ (red figures) in Fig.~\ref{fig:combo}. 

\subsection{Results for $N$ odd.}

The same logic as for the even case is followed for the odd case. We define $\Delta_{\rm{odd}}$, the Whitham–Benjamin coefficient for the generalized KdV equation \eqref{e:gkdv} with odd nonlinearity power $N$: 

\beq
\Delta_{\rm{odd}}=-\alpha\left(\rho j'(\rho)+\dfrac{1}{2}\rho^2\jmath^{\prime\prime}(\rho)\right)=\alpha\widetilde{\Lambda}_b.
\eeq

\no As above, we provide a criterion for the presence of the modulational instability. 

\begin{theorem}\label{t:odd}
    For odd $N$, a $2\pi/\rho$-periodic traveling wave of the generalized KdV equation \eqref{e:gkdv} of sufficiently small amplitude is susceptible to the modulational or Benjamin-Feir instability if 
    
    \beq\label{e:wb}
    \Delta_{\rm{odd}}>0.
    \eeq

\end{theorem}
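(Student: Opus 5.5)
The plan is to work directly from the block structure already established for odd $N$. By Proposition~\ref{p:1}(ii), $\mathcal{F}_2(N,a)=\mathcal{O}(a^N)$. Hence, up to terms that are of higher order than those retained, $Q_{a,\mu}=A_\mu T_{a,\mu}$ decouples into a $2\times2$ block $\Gamma_1$ acting on $\{\psi_1^+,\psi_1^-\}$ and a scalar block $\Gamma_2=i\mu\Lambda_3(N,a,\mu)$ acting on $\psi_0$.

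\textbf{Step 1: the coupling terms.} To make the decoupling rigorous, I would run the same symplectic change of basis used in Appendix~\ref{a:5} for even $N$, through an exponential $\exp(S)$ that is $A_\mu$-symplectic and reversible by Lemma~\ref{l:Asym}. Since the off-diagonal coupling is only $\mathcal{O}(a^N)$, it perturbs $\Gamma_1$ at order $\mathcal{O}(a^{2N}/\widetilde\Lambda_d)$, which is negligible compared with $a^{N-1}$. The scalar eigenvalue $\lambda_0=i\mu\Lambda_3$ is purely imaginary, because the block is reversible and the entry $\Lambda_3$ is real. Together with Lemma~\ref{l:31}(e), this means that any unstable eigenvalue near the origin must be an eigenvalue of $\Gamma_1$.

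\textbf{Step 2: the eigenvalues of $\Gamma_1$.} Multiplying $A_\mu$ by the upper $2\times2$ block $\Lambda_1$ gives
\[
\Gamma_1=\begin{pmatrix}-i\mu\Lambda_{12}+r_{11} & \mu^2\Lambda_b+r_{12}\\ -\alpha\Lambda_{11}^a(N)a^{N-1}-\mu^2\Lambda_b+r_{21} & -i\mu\Lambda_{12}+r_{22}\end{pmatrix}.
\]
Reversibility forces the diagonal entries to be purely imaginary and equal to leading order, and the off-diagonal entries to be real. Consequently the eigenvalues take the form
\[
\lambda_1^\pm=-i\mu\Lambda_{12}+\text{(imaginary corrections)}\pm\sqrt{\gamma_{12}\gamma_{21}},
\]
with $\gamma_{12}\gamma_{21}$ real. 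To leading order this product is
\[
\mu^2\Lambda_b\bigl(-\alpha\Lambda_{11}^a a^{N-1}-\mu^2\Lambda_b\bigr).
\]
The $\mu^2$ factor in $\gamma_{12}$ should be exact, not merely leading order. My reason is that at $\mu=0$ the vectors $\psi_1^-$ (the translation mode $\eta'$ direction) and $\psi_1^+$ (amplitude variation) form a Jordan chain. This would be checked by using the symmetry $\mu\mapsto-\mu$ (Lemma~\ref{l:2}) together with the vanishing of $\Lambda_1$'s $(2,2)$ entry at $\mu=0$.

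\textbf{Step 3: the sign condition.} We have $\Lambda_{11}^a(N)=(N-1)\langle\cos^Nz,\cos z\rangle$, and this is strictly positive for odd $N$, since $\cos^{N+1}z$ is a positive even power. Writing $\Lambda_b=-\widetilde\Lambda_b$ at $\mu=0$, the radicand becomes
\[
\mu^2\Bigl((N-1)\langle\cos^Nz,\cos z\rangle\,\alpha\widetilde\Lambda_b\,a^{N-1}-\mu^2\widetilde\Lambda_b^2\Bigr)+\text{h.o.t.}=\mu^2\Bigl(c_N\Delta_{\rm odd}a^{N-1}-\mu^2\widetilde\Lambda_b^2\Bigr)+\dots,
\]
with $c_N>0$. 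If $\Delta_{\rm odd}>0$, then for $0<\mu\ll a^{(N-1)/2}$ the bracket is positive and dominates the error terms $\mathcal{O}(a^N,a^{N-1}\mu,\mu^N)$. The eigenvalue $\lambda_1^+$ then has positive real part, which gives spectral instability through \eqref{e:spe}.

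\textbf{Main obstacle.} The hard part is the error bookkeeping in Step 2. The remainders $r_{ij}$ must not contaminate the real part of the radicand at order $\mu^2a^{N-1}$. This requires showing that $r_{12}=\mathcal{O}(\mu^2 a,\mu^3)$ and $r_{21}=\mathcal{O}(a^N,\mu^2 a)$, with the real or imaginary parity of each entry fixed by reversibility, so that the diagonal remainders only shift the imaginary part. I would first try to obtain this by tracking the parity structure $\langle\mathcal{T}\psi^\pm,\psi^\mp\rangle\in i\mathbb{R}$ and expanding the entries of $T_{a,\mu}$ in $\mu$ using the Taylor expansions already given for $\Lambda_{11}^\mu$ and $\Lambda_{12}$.
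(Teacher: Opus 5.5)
Your route is the paper's: take the odd-$N$ form of $Q_{a,\mu}=A_\mu T_{a,\mu}$ from Proposition~\ref{p:1}, treat the weak coupling to $\psi_0$ as negligible, and read the instability off the sign of the radicand of the $2\times2$ block $\Gamma_1$. Your extra bookkeeping is more careful than what the paper writes: the $\overline{\mathcal{P}}$-parities of the entries, evenness in $\mu$, the exact $\mu^2$ factor in the $(1,2)$ entry, and the window $0<\mu\ll a^{(N-1)/2}$. The problem is in the one computation the theorem is actually about, the sign of the radicand. You reach the right answer only through two sign errors that cancel.

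First, multiplying out the blocks gives
\[
(A_\mu T_{a,\mu})_{21}=-(T_{a,\mu})_{11}+i\mu\,(T_{a,\mu})_{21}=-\bigl(\mathcal{F}_1(N,a)-\mu^2\Lambda^\mu_{11}\bigr)+\mu^2\Lambda_{12}=\alpha\Lambda^a_{11}(N)a^{N-1}-\mu^2\Lambda_b+\mathcal{O}(a^N),
\]
because $\mathcal{F}_1=-\alpha\Lambda^a_{11}(N)a^{N-1}+\mathcal{O}(a^N)$. Your entry $-\alpha\Lambda^a_{11}(N)a^{N-1}-\mu^2\Lambda_b$ reproduces a sign slip in the paper's approximate odd-$N$ display of $\Gamma_1$. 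The mKdV example matrix, whose $(2,1)$ entry is $-\tfrac32a^2+3\rho^2\mu^2$ at $\alpha=-1$, has the correct sign. Second, with $\Lambda_b=-\Lambda_{12}-\Lambda^\mu_{11}$ and $\widetilde\Lambda_b=-\rho\jmath'(\rho)-\tfrac12\rho^2\jmath''(\rho)$, one has $\Lambda_b(N,0)=+\widetilde\Lambda_b$, not $-\widetilde\Lambda_b$.

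With both fixed, $\gamma_{12}\gamma_{21}=\mu^2\widetilde\Lambda_b\bigl(\alpha\Lambda^a_{11}(N)a^{N-1}-\mu^2\widetilde\Lambda_b\bigr)+\dots=\mu^2\bigl(\Lambda^a_{11}(N)\Delta_{\rm odd}a^{N-1}-\mu^2\widetilde\Lambda_b^2\bigr)+\dots$. This is the paper's $\Delta_{\rm BF}$, so your conclusion stands. As written, however, the argument does not establish it: correcting either error alone flips the criterion to $\Delta_{\rm odd}<0$, which would make focusing mKdV stable. Since the whole content of the theorem is this sign, pin both factors down independently.
\begin{itemize}
\item For the $(2,1)$ entry at $\mu=0$: differentiating the profile equation in $a$ gives $\mathcal{T}_{a,0}\partial_a\eta=-(\partial_ac)\eta$. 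With $\partial_a\eta=\cos z+\mathcal{O}(a^{N-1})$, this yields $\mathcal{Q}_{a,0}\partial_a\eta=-(\partial_ac)\eta'=(N-1)c_{N-1}a^{N-1}\sin z+\mathcal{O}(a^N)=\alpha\Lambda^a_{11}(N)a^{N-1}\sin z+\mathcal{O}(a^N)$.
\item For the $(1,2)$ entry: the explicit Fourier computation at $a=0$ gives $\mathcal{Q}_{0,\mu}\sin z=\mu^2\widetilde\Lambda_b\cos z-i\mu\rho\jmath'(\rho)\sin z+\mathcal{O}(\mu^3)$.
\end{itemize}

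Two smaller points. The radicand also contains $\tfrac14(\gamma_{11}-\gamma_{22})^2$, which is real, nonpositive and $\mathcal{O}(\mu^2a^{2N-2})$, so it is harmless in your window. Step~1 divides by $\widetilde\Lambda_d=\rho\jmath'(\rho)+\jmath(\rho)-1$, a nondegeneracy condition that is not in the theorem. You can avoid it by perturbing $\det(\lambda-Q_{a,\mu})$ directly: the coupling changes it only by $\mu^3\mathcal{O}(a^{2N})$, and at an unstable root of the $2\times2$ factor the scalar factor $\lambda-i\mu\Lambda_3$ has modulus at least $|\mathrm{Re}\,\lambda|$.
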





\no This corresponds to the focusing behavior of equation \eqref{e:gkdv}. Linearizing \eqref{e:gkdv} about the trivial solution $u=0$ and seeking solutions of the form $u(x,t)=\exp({i\rho x-i\omega t)})$ yields the dispersion relation

\[
\omega(\rho)=\rho\jmath(\rho).
\]

\no and the group velocity is $\omega'(\rho)$ with dispersion curvature $\omega''(\rho)$. A standard multiple-scales reduction shows that the slowly modulated small-amplitude wavetrains satisfy, to leading order, a cubic NLS with dispersion coefficient proportional to $\omega''(\rho)$ and nonlinearity proportional to $\alpha$. Thus the generalized KdV equation \eqref{e:gkdv} is focusing if

\[
\omega''(\rho)\alpha < 0,
\]

\no and defocusing if

\[
\omega''(\rho)\alpha > 0, 
\]

\no consistent with Theorem~\ref{t:odd}, where $\Delta_{\rm{odd}}=-\alpha \rho \omega''(\rho)/2$. 

\vs

{\bf Example.} {\em For the focusing modified KdV equation with $\alpha=-1$, $N=3$, $\jmath(k)=1+k^2$, the criterion \eqref{e:wb} is 

\[
3\rho^2>0,
\]

\no and all small-amplitude solutions of the focusing modified KdV equation are succeptible to the modulation instability.} 

\vs

\begin{theorem}[Benjamin-Feir unstable eigenvalues for odd $N$]
   Assume $\Delta_{\rm{odd}}>0$. For sufficiently small $a>0$, there exists a function $\mu_*(a)$ that satisfies
   
\beq\label{picardodd}
\Delta_{\rm{odd}} a^{N-1}-\mu_*^2\Lambda_b^2(N,\mu_*)=\mathcal{O}(a^{N},a^{N-1}\mu_*,\mu_*^{N}),
\eeq
   
\no so that the Benjamin-Feir eigenvalues of the spectrum of the operator $\mathcal{Q}_{a,\mu}$ are given by

\begin{align}\label{oddlambda}
   \lambda^\pm_1(a,\mu)=-i\mu\Lambda_{12}(N,\mu)+\mathcal{O}(a^{N-1}\mu,\mu^N)\pm\mu\sqrt{\Delta_{\rm{BF}}(N,a,\mu)},
\end{align}

\no for $\mu\in[0,\mu_*]$, with

\beq
\Delta_{\rm{BF}}(N,a,\mu)=\Delta_{\rm{odd}}\Lambda_{11}^a(N) a^{N-1}-\mu^2\Lambda_b^2(N,\mu)+\mathcal{O}(a^N,a^{N-1}\mu,\mu^{N}).
\eeq

\no To leading order, $\mu_*(a)=\Delta_{\rm odd}^{1/2}a^{(N-1)/2}/{\tilde \Lambda_b}$. Equation \eqref{picardodd} may be used to determine $\mu_*(a)$ to higher order by Picard iteration.

\end{theorem}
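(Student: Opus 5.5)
The plan is to start from the block-diagonal representation of $Q_{a,\mu}$ obtained from Proposition~\ref{p:1}; for odd $N$ it is already block diagonal since $\mathcal{F}_2=\mathcal{O}(a^N)$. To be safe, I would first remove this $\mathcal{O}(a^N)$ coupling with a symplectic change of basis of the type in Lemma~\ref{l:Asym}, $\exp(S)$ with $S=\mathcal{O}(a^N)$. This only perturbs $\Gamma_1$ at orders absorbed in the stated error terms. The scalar block $\Gamma_2=i\mu\Lambda_3$ gives a purely imaginary eigenvalue, so the Benjamin--Feir eigenvalues are those of the $2\times 2$ block $\Gamma_1=A T$.

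Writing $\Gamma_1$ explicitly from $A_\mu$ and $\Lambda_1$, I would factor out $\mu$ by using $\mathcal{F}_1=-\alpha\Lambda_{11}^a a^{N-1}+\mathcal{O}(a^N)$. The diagonal entries become $-i\mu\Lambda_{12}+\mathcal{O}(a^{N-1}\mu,\mu^N)$. The off-diagonal entries are $\mu\Lambda_b$ and $\mu^{-1}\bigl(-\alpha\Lambda_{11}^a a^{N-1}\bigr)-\mu\Lambda_b$ up to errors, consistent with the approximate form displayed before Section~\ref{sec:results}. Reversibility forces the diagonal entries to be purely imaginary and equal to leading order, and forces the off-diagonal entries to be real. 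Hence the characteristic polynomial is $(\lambda+i\mu\Lambda_{12}+\dots)^2=\mu^2\,\Delta_{\rm BF}$, where the radicand is the product of the off-diagonal entries divided by $\mu^2$. Since $\Lambda_{11}^a(N)=(N-1)\langle\cos^N z,\cos z\rangle>0$, this product is $\Delta_{\rm odd}\Lambda_{11}^a a^{N-1}-\mu^2\Lambda_b^2$ plus errors, after identifying $-\alpha\Lambda_b$ with $\Delta_{\rm odd}$ at leading order. This gives \eqref{oddlambda}.

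The main obstacle is controlling the error terms uniformly in $\mu$ when $\mu\sim a^{(N-1)/2}$, so that the radicand's errors are genuinely of lower order than the two competing main terms. I would use that $Q_{a,\mu}$ is analytic in $(a,\mu)$, and that reversibility together with $\mu\mapsto-\mu$ symmetry (Lemma~\ref{l:2}) makes the radicand a real analytic function. That function has the form $\Delta_{\rm odd}\Lambda_{11}^a a^{N-1}-\mu^2\Lambda_b^2+r(a,\mu)$ with $r=\mathcal{O}(a^N,a^{N-1}\mu,\mu^N)$.

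Finally, $\mu_*(a)$ is defined as the positive zero of the radicand in $\mu$. I would rescale $\mu=a^{(N-1)/2}\nu$, divide by $a^{N-1}$, and obtain $\Delta_{\rm odd}\Lambda_{11}^a-\nu^2\widetilde\Lambda_b^2+o(1)=0$. The root $\nu_0=(\Delta_{\rm odd}\Lambda_{11}^a)^{1/2}/|\widetilde\Lambda_b|$ is simple, so the implicit function theorem gives $\mu_*(a)$ as in \eqref{picardodd}. The leading order is as stated, with the factor $\Lambda_{11}^a$ absorbed in the normalization. On $[0,\mu_*]$ the radicand is nonnegative, which yields the unstable branches, and Picard iteration refines $\mu_*$.
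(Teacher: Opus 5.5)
Your overall route is the paper's: read $\Gamma_1$ off Proposition~\ref{p:1}, discard the imaginary scalar block $i\mu\Lambda_3$, take the $2\times2$ characteristic polynomial, and define $\mu_*$ as the positive zero of the radicand. Rescaling $\mu=a^{(N-1)/2}\nu$ and applying the implicit function theorem is a sound, cleaner replacement for the paper's ``Picard iteration'' remark; for $N\geq3$ the remainder becomes $\mathcal{O}(a)$ after dividing by $a^{N-1}$. If you remove the $\mathcal{O}(a^N)$ coupling, do it after the rescaling $L$ of Lemma~\ref{lem:AQT-rescaling}. In the original coordinates the two blocks are only $\mathcal{O}(\mu)$ apart, while $Q_{23}=-\mathcal{F}_2$ carries no factor $\mu$, so the generator can be of size $a^N/\mu$ rather than $\mathcal{O}(a^N)$.

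The real problem is the sign bookkeeping, which is the entire content of the criterion. It is wrong in two places that happen to cancel.
\begin{itemize}
\item Multiplying out $A_\mu T_{a,\mu}$ gives $(\Gamma_1)_{21}=-\mathcal{F}_1-\mu^2\Lambda_b=+\alpha\Lambda_{11}^a a^{N-1}-\mu^2\Lambda_b+\mathcal{O}(a^N)$. The paper's mKdV example confirms this: it shows $-\tfrac32a^2+3\rho^2\mu^2$ for $\alpha=-1$, with $\Lambda_{11}^a(3)=\tfrac32$. The $-\alpha\Lambda_{11}^a a^{N-1}$ you copied from the approximate display before Section~\ref{sec:results} is a slip.
\item Since $\Lambda_b(N,0)=-\rho\jmath'(\rho)-\tfrac12\rho^2\jmath''(\rho)=\widetilde\Lambda_b$, the paper's definition gives $\Delta_{\rm odd}=+\alpha\widetilde\Lambda_b$. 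So ``identifying $-\alpha\Lambda_b$ with $\Delta_{\rm odd}$'' is false.
\end{itemize}
Taken consistently, your matrix gives a negative off-diagonal product for focusing mKdV, i.e.\ stability. The correct chain is $\Gamma_{12}\Gamma_{21}/\mu^2=\alpha\Lambda_b\Lambda_{11}^a a^{N-1}-\mu^2\Lambda_b^2+\dots$ with $\alpha\Lambda_b=\Delta_{\rm odd}+\mathcal{O}(\mu^2)$.

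Two further points need fixing.
\begin{itemize}
\item The zero of $\Delta_{\rm BF}$ is $\mu_*\simeq(\Delta_{\rm odd}\Lambda_{11}^a)^{1/2}a^{(N-1)/2}/|\widetilde\Lambda_b|$. The factor $\Lambda_{11}^a(N)=(N-1)2^{1-N}\binom{N}{(N-1)/2}$ is a fixed number ($\tfrac32$ for $N=3$), not a normalization that can be absorbed. Your $\mu_*$ therefore does not satisfy \eqref{picardodd} as printed. You should say that \eqref{picardodd} and the leading-order $\mu_*$ must carry $\Lambda_{11}^a(N)$, as the paper's own $q_{\max}$ does, rather than claim agreement.
\item Reversibility and $\mu\mapsto-\mu$ only make the discriminant of $\Gamma_1$ real and even in $\mu$. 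Dividing it by $\mu^2$ requires it to vanish at $\mu=0$ for every $a$, and that comes from translation invariance. $\psi_1^-(a,0)$ spans the odd part of $\mathscr{F}_{a,0}$, which contains $\eta'\in\ker\mathcal{T}_{a,0}$. Hence $T_{12}(a,0)=T_{22}(a,0)=0$ and $\Gamma_1(a,0)$ is nilpotent. This is what the explicit $\mu$ and $\mu^2$ prefactors in $\Lambda_1$ of Proposition~\ref{p:1} encode, and it is what keeps terms like $a^{2N-1}/\mu^2$ out of $\Delta_{\rm BF}$ as $\mu\to0$.
\end{itemize}
A minor point: the common diagonal part of $\Gamma_1$ is $-i\mu(\Lambda_{12}+\mu^2\Lambda_{11}^\mu)+\mathcal{O}(a^{N-1}\mu)$. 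So for $N\geq5$, the $\mathcal{O}(\mu^N)$ error that you and the statement assign to the imaginary part misses a $\mu^3$ term. This only shifts the centre line of the figure eight.
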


As for the even power case, the unstable spectrum of $\mathcal{Q}_a$ near the origin is fully characterized by the two curves $\mu \mapsto \lambda^\pm_1(a,\mu)$. As $\mu$ varies within the interval $[0, \mu_*]$, these curves form a closed figure resembling the shape of an ``8''. The upper part of this figure arises when $\Delta_{\rm{odd}}-\mu^2\Lambda_b^2 > 0$ for $\mu \in [0, \mu_*)$, while the lower part appears for $\mu < 0$. 
This figure-eight is easily analyzed analogously as the even case: ignoring the $\mathcal{O}$ contributions in \eqref{oddlambda}, to leading order the unstable spectrum near the origin is given by the Gerono/Huygens Lemniscate again: with $\mu\in [-\mu_*, \mu_*]$, 

\begin{align*}
p&=\mu \sqrt{\Delta_{\rm{odd}}\Lambda_{11}^a(N)a^{N-1}-\mu^2\tilde \Lambda_b^2},\\
q&=-\mu \Lambda_{12},
\end{align*}

\no where $p=\mbox{Re} \lambda^\pm_1(a,\mu)$, $q=\mbox{Im} \lambda^\pm_1(a,\mu)$. Eliminating $\mu$, 

\[
p^2=\frac{q^2}{(\rho\jmath^\prime(\rho))^2}\left(\Delta_{\rm{odd}}\Lambda_{11}^a(N)a^{N-1}-\frac{q^2}{(\rho\jmath^\prime(\rho))^2}\widetilde{\Lambda}_b^2\right). 
\]

\no The top of this figure eight is given by 

\[
q_{\rm{max}}=\frac{|\rho\jmath^\prime(\rho)|a^{(N-1)/2}(\Delta_{\rm{odd}} \Lambda_{11}^a(N))^{1/2}}{|\widetilde{\Lambda}_b|}, 
\]

\no and its width

\[
p_{\rm{max}}-p_{\rm{min}}=\frac{\Delta_{\rm{odd}}\Lambda_{11}^a(N)a^{N-1}}{|\widetilde{\Lambda}_b|}. 
\]

\no These results show that also for odd $N$, for solutions of small amplitude $a$, the figure-eight curves shrink in size exponentially fast with $N$, with the width shrinking quadratically faster than the height, but at half the rates deduced for the even case. 

Regardless of the choice of the operator $\mathcal{J}$, the spectrum of $\mathcal{Q}_a$ near the origin is either purely imaginary or contains an unstable component that consistently exhibits a figure-eight shape. As for the even case, increasing $N$ results in a narrower figure-eight shape, see the case of the Whitham equation with $N=3$ and $N=5$ (blue figures) in Fig.~\ref{fig:combo}. It is observed there that the figure eight curves for odd powers are larger and wider (thus indicating more unstable waves) than those for lower even powers: the maximal growth rate for $N=3$ is larger than for $N=2$, that for $N=5$ is larger than for $N=4$, even though these exponents do not yet validate the asymptotic regime for large $N$ remarked above. 

Interesting shapes, different from a figure eight, are obtained by plotting the unstable spectrum for values of $a$ and $\rho$ where $a$ is not necessarily small. Numerical experiments show that these shapes do not represent the actual unstable spectrum near the origin, and the expressions obtained should only be used in the asymptotic regime relevant for their derivation. With $\rho$ fixed and decreasing values of $a$, figure-eight shapes are recovered, as required by our analytical results.

\begin{figure}[tb]
\centering
\def\svgwidth{6in}   
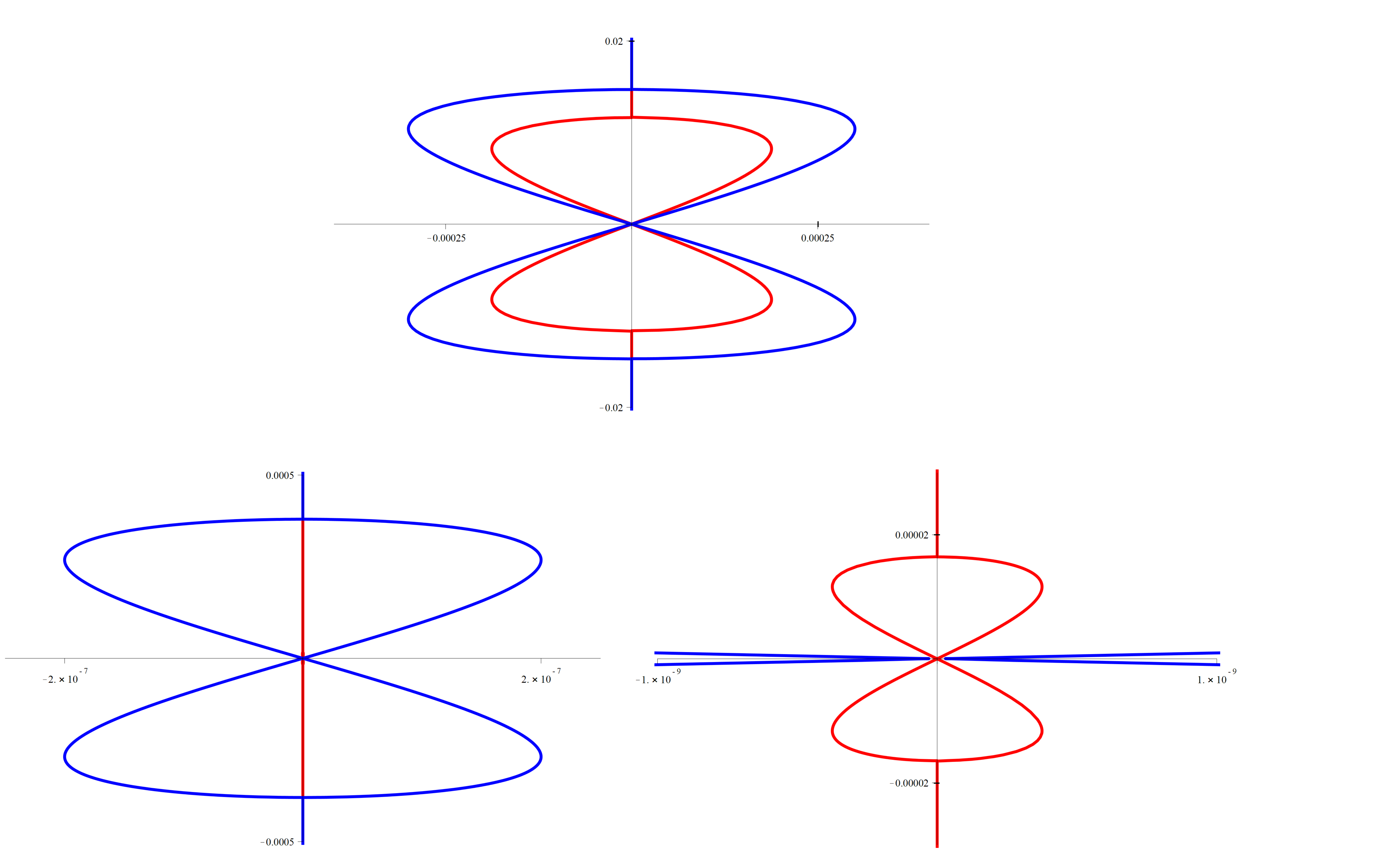
\caption{The unstable spectrum near the origin, for the Whitham equation ($N=2$) and its generalizations with higher-degree ($N=3,4,5$) monomial nonlinearity, with $a=0.02$ and $\rho=1.5$.}
\label{fig:combo}
\end{figure}

\section*{Acknowledgements.} Alberto Maspero, Ashish Pandey and Antonio Milosh Radakovic are thanked for useful conversations. 

\appendix

\section{Properties of the operator}\label{a:2}

Equation \eqref{e:gkdv} has the Hamiltonian formulation

\[
\partial_tu=\partial_x\nabla H(u),
\] 

\no where 

\begin{equation}\label{e:h}
      H(u)=-\dfrac{1}{2}\int_\R u(\mathcal{J}u)dx-\dfrac{1}{N+1}\int_\R u^{N+1}dx. 
\end{equation}

\no The equation is invariant under translation, as the vector field $Y(u)=-\partial_x\circ (\J u+\alpha u^N)$ satisfies 
\[
Y\circ \sigma_\varsigma=\sigma_\varsigma\circ Y,
\]

\no where $\sigma_\varsigma$ is the translation operator $[\sigma_\varsigma u](x)=u(x+\varsigma)$, $\varsigma\in\R$. Additionally, \eqref{e:gkdv} is reversible, with $Y(u)$ satisfying 

\[
Y\circ \mathcal{P}=-\mathcal{P} \circ Y,
\] 

\no where $\mathcal{P}$ is the involution $[\mathcal{P} u](x)=u(-x)$. The linearized operator $\mathcal{Q}_a$ in \eqref{e:op} preserves the system's structure. First, it is time reversible, and  second, it is linearly Hamiltonian, expressed as
\[
  \mathcal{Q}_a=\partial_x\circ \mathcal{T}_a,
\] 

\no where $\mathcal{T}_a$ is the symmetric operator

\[
\mathcal{T}_a=c-\mathcal{J}_\rho-N\alpha \eta^{N-1}. 
\] 

\no Consider the Floquet operator $\mathcal{Q}_{a,\mu}$ in \eqref{e:op1}. It assumes the form of a complex Hamiltonian and reversible operator:

\beq\label{e:hr}
\mathcal{Q}_{a,\mu}=(\partial_z+i\mu)\circ \mathcal{T}_{a,\mu},
\eeq

\no where 
the self-adjoint operator $\mathcal{T}_{a,\mu}$ is 
 
\beq\label{eq:tdef}
\mathcal{T}_{a,\mu}=c-e^{-i\mu z}\mathcal{J}_\rho e^{i\mu z}-N\alpha \eta^{N-1}.
\eeq

\no The operator $\mathcal{T}_{a,\mu}$ preserves reversibility, that is
 
\begin{equation}\label{e:tr}
\mathcal{T}_{a,\mu}\circ\overline{\mathcal{P}}=\overline{\mathcal{P}}\circ\mathcal{T}_{a,\mu}.
 \end{equation}

Note that $\operatorname{spec}_{L^2(\mathbb{T})}(\mathcal Q_{a,\mu})$ is not symmetric with respect to reflections about the real axis and the origin. Rather it exhibits the following properties. 

\begin{lemma}\label{l:2}

  Denote by $\mathcal{R}_r$, $\mathcal{R}_i$ and $\mathcal{R}_o$ reflections about the real axis, the imaginary axis, and the origin, respectively. Then
   
\begin{enumerate}

\item $\mathcal{R}_r(\mathcal{Q}_{a,\mu}\psi)(z)=(\mathcal{Q}_{a,-\mu}\mathcal{R}_r\psi)(z),$
     
\item $\mathcal{R}_i(\mathcal{Q}_{a,\mu}\psi)(z)=-(\mathcal{Q}_{a,\mu}\mathcal{R}_i\psi)(z),$

\item $\mathcal{R}_o(\mathcal{Q}_{a,\mu}\psi)(z)=-(\mathcal{Q}_{a,-\mu}\mathcal{R}_o\psi)(z),$
        
\item If $\lambda\in\operatorname{spec}(\mathcal{Q}_{a,\mu})$, then so is $-\Bar{\lambda}$. Furthermore, $-\Bar{\lambda}, -\lambda\in\operatorname{spec}(\mathcal{Q}_{a,-\mu})$.

\end{enumerate}

\end{lemma}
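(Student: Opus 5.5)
We prove the four items by direct computation on the Floquet operator \eqref{e:op1}, $\mathcal{Q}_{a,\mu}=(\partial_z+i\mu)\circ\mathcal{T}_{a,\mu}$ with $\mathcal{T}_{a,\mu}$ as in \eqref{eq:tdef}. We interpret the reflections as acting on functions by $\mathcal{R}_r\psi=\overline{\psi}$ (complex conjugation, which reflects spectral values about the real axis), $\mathcal{R}_i\psi(z)=\overline{\psi(-z)}$ (the involution $\overline{\mathcal{P}}$), and $\mathcal{R}_o\psi(z)=\psi(-z)$ (the composition of the two). The only inputs needed are the following.
\begin{itemize}
\item[(i)] The Stokes wave $\eta$ is real valued and even (Theorem~\ref{t:1}), and $c$ is real.
\item[(ii)] The symbol $\jmath$ is real and even (H1).
\item[(iii)] The conjugated multiplier $e^{-i\mu z}\mathcal{J}_\rho e^{i\mu z}$ acts on $e^{inz}$ by multiplication by $\jmath(\rho(n+\mu))$.
\end{itemize}

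\emph{Item (1).} Conjugate $\mathcal{Q}_{a,\mu}\psi$ term by term. First, $\overline{(\partial_z+i\mu)f}=(\partial_z-i\mu)\overline f$. Next, expand $\psi=\sum_n\hat\psi_ne^{inz}$, so that $\overline\psi=\sum_n\overline{\hat\psi_{-n}}e^{inz}$. Since $\jmath$ is real and even, $\overline{\jmath(\rho(n+\mu))}=\jmath(\rho(-n-\mu))$, and hence $\overline{e^{-i\mu z}\mathcal{J}_\rho e^{i\mu z}\psi}=e^{i\mu z}\mathcal{J}_\rho e^{-i\mu z}\overline\psi$. The multiplication terms $c$ and $N\alpha\eta^{N-1}$ are real, so they commute with conjugation. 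Together these give $\overline{\mathcal{Q}_{a,\mu}\psi}=\mathcal{Q}_{a,-\mu}\overline\psi$.

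\emph{Item (3).} Apply $z\mapsto -z$. It sends $\partial_z$ to $-\partial_z$ and maps $e^{inz}$ to $e^{-inz}$. Using evenness of $\jmath$, the multiplier term becomes $e^{i\mu z}\mathcal{J}_\rho e^{-i\mu z}$. Because $\eta$ is even, $\eta^{N-1}$ is unchanged. Therefore $(\partial_z+i\mu)\mapsto-(\partial_z-i\mu)$, which gives $\mathcal{R}_o\mathcal{Q}_{a,\mu}=-\mathcal{Q}_{a,-\mu}\mathcal{R}_o$.

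\emph{Item (2).} Compose (1) and (3): the two sign flips $\mu\mapsto-\mu$ cancel, leaving a single overall minus sign. This is exactly the reversibility identity $\mathcal{Q}_{a,\mu}\circ\overline{\mathcal{P}}=-\overline{\mathcal{P}}\circ\mathcal{Q}_{a,\mu}$. It is consistent with \eqref{e:tr} together with the fact that $\overline{\mathcal{P}}$ anticommutes with $\partial_z+i\mu$: we have $\overline{\mathcal{P}}\partial_z=-\partial_z\overline{\mathcal{P}}$ and $\overline{\mathcal{P}}(i\mu\,\cdot)=-i\mu\,\overline{\mathcal{P}}$.

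\emph{Item (4).} Recall that $\mathcal{Q}_{a,\mu}$ has compact resolvent on $L^2(\T)$, so its spectrum consists only of eigenvalues. If $\mathcal{Q}_{a,\mu}\psi=\lambda\psi$, apply $\overline{\mathcal{P}}$ and use (2): $\mathcal{Q}_{a,\mu}\overline{\mathcal{P}}\psi=-\overline{\mathcal{P}}(\lambda\psi)=-\bar\lambda\,\overline{\mathcal{P}}\psi$. Since $\overline{\mathcal{P}}$ is an antilinear bijection, $\overline{\mathcal{P}}\psi\neq0$, so $-\bar\lambda\in\operatorname{spec}(\mathcal{Q}_{a,\mu})$. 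In the same way, (3) applied to $\psi(-z)$ gives $-\lambda\in\operatorname{spec}(\mathcal{Q}_{a,-\mu})$, and (1) applied to $\overline\psi$ gives $\bar\lambda\in\operatorname{spec}(\mathcal{Q}_{a,-\mu})$. The stated claim $-\bar\lambda\in\operatorname{spec}(\mathcal{Q}_{a,-\mu})$ then follows by applying the first conclusion to $\bar\lambda$ at $-\mu$.

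The main obstacle is the bookkeeping in the nonlocal term. One must check carefully that conjugation and reflection turn $e^{-i\mu z}\mathcal{J}_\rho e^{i\mu z}$ into its $-\mu$ counterpart; this is precisely where the evenness and realness of $\jmath$ enter. Everything else is routine.
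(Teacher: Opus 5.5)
Your proofs of items (1)--(3), and of the first half of (4), are correct. They follow essentially the paper's route: conjugate and reflect $\mathcal{Q}_{a,\mu}=(\partial_z+i\mu)\circ\mathcal{T}_{a,\mu}$, using that $\jmath$, $c$, $\eta$ are real and that $\jmath$, $\eta$ are even. Getting (2) by composing (1) and (3) is only a cosmetic variation.

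The gap is the last sentence of item (4). Apply the fixed-$\mu$ symmetry $\nu\mapsto-\bar\nu$ to $\nu=\bar\lambda\in\operatorname{spec}(\mathcal{Q}_{a,-\mu})$. This yields $-\overline{\bar\lambda}=-\lambda$, which you already had from (3). It does not yield $-\bar\lambda$. More fundamentally, the symmetries you did establish give $\operatorname{spec}(\mathcal{Q}_{a,-\mu})=\{\bar\nu:\nu\in\operatorname{spec}(\mathcal{Q}_{a,\mu})\}=\{-\nu:\nu\in\operatorname{spec}(\mathcal{Q}_{a,\mu})\}$. So asking that $-\bar\lambda\in\operatorname{spec}(\mathcal{Q}_{a,-\mu})$ for every $\lambda\in\operatorname{spec}(\mathcal{Q}_{a,\mu})$ is equivalent to $\operatorname{spec}(\mathcal{Q}_{a,\mu})$ being symmetric about the real axis. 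The paper itself says this symmetry fails.

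In fact the claim is false, so no argument can close your last step. At $a=0$, $\operatorname{spec}(\mathcal{Q}_{0,\mu})=\{i\Omega_{n,\mu}\}$ is purely imaginary, so $-\bar\lambda=\lambda$. Evenness of $\jmath$ gives $\Omega_{-n,-\mu}=-\Omega_{n,\mu}$, hence $\operatorname{spec}(\mathcal{Q}_{0,-\mu})=\{-i\Omega_{n,\mu}\}$. Take $\jmath(k)=1+k^2$, $\rho=1$, $\mu=0.1$, so that $\Omega_{n,\mu}=(n+\mu)(1-(n+\mu)^2)$. The eigenvalue $\lambda=i\Omega_{0,\mu}=0.099\,i$ is not in $\operatorname{spec}(\mathcal{Q}_{0,-\mu})$. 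The points of that spectrum near the origin are $-0.099\,i$, $0.171\,i$, $0.231\,i$, and all others have modulus above $4$. These eigenvalues are simple, so by continuity the failure persists for small $a\neq0$.

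The paper's own proof reaches $-\bar\lambda$ only through a sign slip: it writes $\mathcal{Q}_{a,-\mu}\mathcal{R}_r\varphi=-\mathcal{R}_r\mathcal{Q}_{a,\mu}\varphi$, which contradicts its item (1). The correct second clause of (4), which your computation already proves, is $\bar\lambda,-\lambda\in\operatorname{spec}(\mathcal{Q}_{a,-\mu})$, with eigenfunctions $\overline{\psi}$ and $\psi(-\,\cdot)$. This is all that is needed later to restrict to $\mu\geq0$. End your proof there and flag the misprint instead of asserting $-\bar\lambda$.
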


\begin{proof}

The operations $\mathcal{R}_r$, $\mathcal{R}_i$ and $\mathcal{R}_o$ are defined by

\begin{equation}\label{e:ref}
\mathcal{R}_r\psi(z)=\overline{\psi(z)},\quad   \mathcal{R}_i\psi(z)=\overline{\psi(-z)},\quad\text{and}\quad \mathcal{R}_o\psi(z)=\psi(-z).
\end{equation}

\no Using the explicit form of $\mathcal{Q}_{a,\mu}$, 

\begin{equation}\label{e:01}
    \mathcal{R}_r(\mathcal Q_{a,\mu}\psi)(z)=\overline{(\mathcal Q_{a,\mu}\psi)(z)}=\mathcal Q_{a,-\mu}\overline{\psi(z)}=(\mathcal Q_{a,-\mu}\mathcal{R}_r\psi)(z),
\end{equation}

\begin{equation}\label{e:2}
\mathcal{R}_i(\mathcal Q_{a,\mu}\psi)(z)=\overline{(\mathcal Q_{a,\mu}\psi)(-z)}=-\mathcal Q_{a,\mu}\overline{\psi(-z)}=-(\mathcal Q_{a,\mu}\mathcal{R}_i\psi)(z),
\end{equation}

\begin{equation}\label{e:3}
 \mathcal{R}_o(\mathcal Q_{a,\mu}\psi)(z)=(\mathcal Q_{a,\mu}\psi)(-z)=-\mathcal Q_{a,-\mu}\psi(-z)=-(\mathcal Q_{a,-\mu}\mathcal{R}_o\psi)(z).
\end{equation}

\no Assume $\lambda$ is an eigenvalue of $\mathcal Q_{a,\mu}$ with an associated eigenfunction $\varphi$:

\begin{equation}\label{e:eig1}
    \mathcal Q_{a,\mu}\varphi=\lambda\varphi.
\end{equation}

\no Using \eqref{e:eig1},

\begin{equation}
\mathcal Q_{a,-\mu}\mathcal{R}_r\varphi=-\mathcal{R}_r\mathcal Q_{a,\mu}\varphi=-\overline{\lambda}\mathcal{R}_r\varphi,
\end{equation}

\begin{equation}
\mathcal Q_{a,\mu}\mathcal{R}_i\varphi=-\mathcal{R}_i\mathcal Q_{a,\mu}\varphi=-\overline{\lambda}\mathcal{R}_i\varphi,
\end{equation}

\begin{equation}
\mathcal Q_{a,-\mu}\mathcal{R}_o\varphi=-\mathcal{R}_o\mathcal Q_{a,\mu}\varphi=-\lambda\mathcal{R}_o\varphi.
\end{equation}

\no It follows that if $\lambda$ is an eigenvalue of $\mathcal{Q}_{a,\mu}$ with eigenfunction $\varphi$, then $-\overline{\lambda}$ is an eigenvalue of $\mathcal{Q}_{a,\mu}$ with eigenfunction $\mathcal{R}_i\varphi$. Moreover, $-\overline{\lambda}$ and $-\lambda$ are eigenvalues of $\mathcal{Q}_{a,-\mu}$ with eigenfunctions $\mathcal{R}_r\varphi$ and $\mathcal{R}_o\varphi$, respectively.

\end{proof}

Given these properties, it suffices to consider $\mu\in(0,1/2]$. The results from Lemma~\ref{l:2} are not surprising: the 4-fold symmetry of the stability spectra of Hamiltonian problems is well known~\cite{KP}, but the problems considered here allow for the extra complication of nonlocal dispersion, which is why we include the statement and proof of the lemma. 

\begin{lemma}\label{l:31a}
    For $a$ and $\mu$ sufficiently small, the following properties hold. 
    
\begin{enumerate}[(a)]

\item The point spectrum of $\mathcal{Q}_{a,\mu}$ decomposes as 

\begin{equation*}
           \operatorname{spec}(\mathcal Q_{a,\mu})= \operatorname{spec}_0(\mathcal Q_{a,\mu})\cup\operatorname{spec}_1(\mathcal Q_{a,\mu}),
\end{equation*}

\no with

\begin{equation*}
           \operatorname{spec}_0(\mathcal Q_{a,\mu})\subset B(0;R/3),\quad \operatorname{spec}_1(\mathcal Q_{a,\mu})\subset \C\setminus \overline{B(0;R/2)},
\end{equation*}

\no where $R=|\jmath(\rho)-\jmath(2\rho)|$. In other words, for fixed $a$ and $\mu$, there is a separation between the point spectrum near the origin and the rest of the spectrum. The size of this separation is independent of $a$ or $\mu$. 
        
\item The spectral projection $\Pi_a(\mu)$ associated with $\operatorname{spec}_0(\mathcal Q_{a,\mu})$, 

\begin{equation}\label{e:pr}
\Pi_{a,\mu}=\dfrac{1}{2\pi i}\oint_{\partial B(0;R/3)}(\lambda-\mathcal{Q}_{a,\mu})^{-1}d\lambda, 
\end{equation}

\no satisfies $\|\Pi_{a,\mu}-\Pi_{0,0}\|=\mathcal{O}(|\mu|+|a|)$. The operators $\Pi_{a,\mu}$ are well-defined projectors commuting with $\mathcal{Q}_{a,\mu}$:

\begin{equation}\label{e:pp}
\Pi^2_{a,\mu}=\Pi_{a,\mu},\quad\Pi_{a,\mu}\mathcal{Q}_{a,\mu}=\mathcal{Q}_{a,\mu}\Pi_{a,\mu}.
\end{equation}   
        
\item The projectors $\Pi_{a,\mu}$ are similar to each other: 

\beq\label{e:u2}
\mathcal{U}_{a,\mu}\Pi_{0,0}\mathcal{U}^{-1}_{a,\mu}=\Pi_{a,\mu},\quad \mathcal{U}^{-1}_{a,\mu}\Pi_{a,\mu}\mathcal{U}_{a,\mu}=\Pi_{0,0},
\eeq

\no with transformation operators

\beq\label{e:u}
\mathcal{U}_{a,\mu}=(\mathcal{I}-(\Pi_{a,\mu}-\Pi_{0,0})^2)^{-1/2}[\Pi_{a,\mu}\Pi_{0,0}+(\mathcal{I}-\Pi_{a,\mu})(\mathcal{I}-\Pi_{0,0})],
\eeq 

\no which are bounded and invertible in $H^s(\T)$ and in $L^2(\T)$. Their inverse is
        
\begin{equation}\label{e:u1}
\mathcal{U}^{-1}_{a,\mu}=[\Pi_{0,0}\Pi_{a,\mu}+(\mathcal{I}-\Pi_{0,0})(\mathcal{I}-\Pi_{a,\mu})](\mathcal{I}-(\Pi_{a,\mu}-\Pi_{0,0})^2)^{-1/2}.
\end{equation}

\item The subspaces $\mathscr{F}_{a,\mu}=\rm{Rg}(\Pi_{a,\mu})$ are isomorphic to each other: $\mathscr{F}_{a,\mu}=\mathcal{U}_{a,\mu}\mathscr{F}_{0,0}$. In particular, $\dim{\mathscr{F}_{a,\mu}}=\dim{\mathscr{F}_{0,0}}=3$.

\item $\operatorname{spec}_1(\mathcal Q_{a,\mu})\subset i \R$.

\end{enumerate}
\end{lemma}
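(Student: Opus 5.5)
The plan is to follow the Kato perturbation scheme used by Berti, Maspero \& Ventura and by Maspero \& Radakovic, treating $\mathcal{Q}_{a,\mu}$ as an analytic perturbation of the Fourier multiplier $\mathcal{Q}_{0,0}$. Write
\[
\mathcal{Q}_{a,\mu}-\mathcal{Q}_{0,0}=(\partial_z+i\mu)\circ\bigl(c(a)-c_0-N\alpha\eta^{N-1}\bigr)+i\mu\,\mathcal{T}_{0,0}+\partial_z\circ\bigl(\mathcal{J}_\rho-e^{-i\mu z}\mathcal{J}_\rho e^{i\mu z}\bigr)+i\mu\bigl(\mathcal{J}_\rho-e^{-i\mu z}\mathcal{J}_\rho e^{i\mu z}\bigr).
\]
By Theorem~\ref{t:1}, $\eta^{N-1}$ and $c(a)-c_0$ are analytic in $a$ and $\mathcal{O}(a)$ in $H^\infty$. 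The symbol of $e^{-i\mu z}\mathcal{J}_\rho e^{i\mu z}$ is $\jmath(\rho(n+\mu))$, and (H2) gives a bound on $\jmath(\rho(n+\mu))-\jmath(\rho n)$ by $C|\mu|\langle n\rangle^{\sigma}$. The first step is therefore to show that $\mathcal{Q}_{a,\mu}-\mathcal{Q}_{0,0}$ is $\mathcal{Q}_{0,0}$-bounded with relative bound $\mathcal{O}(|a|+|\mu|)$, as a map $H^s(\T)\to L^2(\T)$. If the derivative of $\jmath$ does not obey a symbol estimate, I would instead assume or derive this bound directly from (H2) for large $|n|$ and use continuity for bounded $n$.

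For part (a), the unperturbed spectrum is $\{i\Omega_{n,0}\}$. Its only point in $B(0;R)$ is $0$, of multiplicity three coming from $n=0,\pm1$, and $|\Omega_{n,0}|\ge R$ for all other $n$. I need this lower bound uniformly for $|n|\ge2$. It follows from (H3) together with the growth in (H2): $|n||\jmath(\rho)-\jmath(\rho n)|\to\infty$ when $\sigma>-1$, and it is bounded below when $\sigma=-1$ because $\jmath(\rho n)\to0\neq\jmath(\rho)$ generically. Enlarging $R$ to the minimum over $|n|\ge2$ if necessary, we get $\|(\lambda-\mathcal{Q}_{0,0})^{-1}\|\le C$ on the annulus $R/3\le|\lambda|\le R/2$. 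Since $\mathcal{Q}_{0,0}$ is diagonal in Fourier, $\|\mathcal{Q}_{0,0}(\lambda-\mathcal{Q}_{0,0})^{-1}\|$ is also bounded there. A Neumann series in
\[
(\lambda-\mathcal{Q}_{a,\mu})^{-1}=(\lambda-\mathcal{Q}_{0,0})^{-1}\bigl(\mathcal{I}-(\mathcal{Q}_{a,\mu}-\mathcal{Q}_{0,0})(\lambda-\mathcal{Q}_{0,0})^{-1}\bigr)^{-1}
\]
then shows the resolvent exists on that annulus for $|a|+|\mu|$ small, which gives the splitting in (a).

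Part (b) comes from integrating the resolvent over $\partial B(0;R/3)$. The resolvent identity gives $\|\Pi_{a,\mu}-\Pi_{0,0}\|=\mathcal{O}(|a|+|\mu|)$, and the standard Riesz calculus gives $\Pi_{a,\mu}^2=\Pi_{a,\mu}$ and commutation with $\mathcal{Q}_{a,\mu}$. Analyticity in $(a,\mu)$ is inherited from the Neumann series. Parts (c) and (d) are Kato's classical construction. Once $\|\Pi_{a,\mu}-\Pi_{0,0}\|<1$, the operator $\mathcal{I}-(\Pi_{a,\mu}-\Pi_{0,0})^2$ is invertible and its square root is defined by the binomial series; it commutes with both projectors. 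A direct computation then gives $\mathcal{U}_{a,\mu}\Pi_{0,0}=\Pi_{a,\mu}\mathcal{U}_{a,\mu}$ and the stated formula for the inverse. Boundedness on $H^s$ follows because $\Pi_{a,\mu}$ maps $L^2$ into $H^s$: its range is finite-dimensional and consists of smooth functions. The dimension count is then $\dim\mathscr F_{a,\mu}=\operatorname{rank}\Pi_{0,0}=3$, since $\Pi_{0,0}$ projects onto $\operatorname{span}\{1,\cos z,\sin z\}$.

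Part (e) is the main obstacle, because it is a statement about infinitely many eigenvalues and a small-perturbation argument only controls them locally. I would split it into three steps.
\begin{enumerate}[(i)]
\item For large $|n|$, the gaps $|\Omega_{n+1,\mu}-\Omega_{n,\mu}|$ grow, or at least stay bounded below, by (H2). Each isolated eigenvalue $i\Omega_{n,\mu}$ is simple, so a Gershgorin or Rouché-type argument on small disks around $i\Omega_{n,\mu}$ shows that each perturbed eigenvalue stays simple.
\item A simple eigenvalue must lie on $i\R$. By Lemma~\ref{l:2}, if $\lambda$ is an eigenvalue then so is $-\bar\lambda$. If $\lambda\notin i\R$, this produces two distinct eigenvalues inside one disk that contains only one, a contradiction.
\item For the finitely many remaining $n$ with $|\lambda|\ge R/2$ and $\mu$ small, the unperturbed eigenvalues are simple and separated, except for possible collisions $\Omega_{n,0}=\Omega_{m,0}$. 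At such a collision I would use the Krein signature, which comes from the Hamiltonian structure $\mathcal{Q}_{a,\mu}=\mathcal{A}_\mu\mathcal{T}_{a,\mu}$ (the sign of $\langle\mathcal{T}_{0,\mu}e^{inz},e^{inz}\rangle$). When the colliding eigenvalues have the same signature, they cannot leave the imaginary axis.
\end{enumerate}
If some collision has opposite signatures, part (e) would only hold after restricting to a neighborhood of the origin. In that case I would say so and assume non-resonance there, as the paper's reliance on (H3) implicitly does.
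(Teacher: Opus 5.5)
Your treatment of (a)--(d) is essentially the paper's: a Neumann series for $(\lambda-\mathcal{Q}_{a,\mu})^{-1}$ on the annulus, the Riesz projection, and Kato's transformation operators. Your caution that the spectral gap is really $\min_{|n|\ge2}|\Omega_{n,0}|$ rather than $|\jmath(\rho)-\jmath(2\rho)|$ is fair, though replacing $R$ by that minimum shrinks $R$ rather than enlarging it.

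The genuine gap is step (i) of (e): it cannot be made uniform in frequency. The perturbation
\[
\mathcal{Q}_{a,\mu}-\mathcal{Q}_{0,\mu}=(\partial_z+i\mu)\circ\bigl(c(a)-c_0-N\alpha\eta^{N-1}\bigr)
\]
is a first-order operator. In the Fourier basis it couples $e^{inz}$ to $e^{i(n+j)z}$, $j\neq0$, with coefficients of size $|a|^{N-1}|n|$. The spacing of the unperturbed eigenvalues $i\Omega_{n,\mu}$ is only of order $|n|^{\sigma}$ when $\sigma>0$, and of order one when $\sigma\le0$. Hence for $\sigma<1$, any Gershgorin or Rouch\'e disks around $i\Omega_{n,\mu}$ must have radii growing like $|a|^{N-1}|n|$. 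They overlap for all large $|n|$, however small $a$ is. This includes the Whitham equation ($\sigma=-1/2$, gaps tending to $\jmath(\rho)$).

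For Whitham this is not an artifact of crude estimates. There $\partial_z\mathcal{J}_\rho$ is of lower order, so at high frequency $\mathcal{Q}_{a,0}$ behaves like the transport operator $\partial_z\circ(c-N\alpha\eta^{N-1})$. Its eigenvalues are $ik$ times the harmonic mean of $c-N\alpha\eta^{N-1}$, so the perturbed eigenvalues generically drift from the unperturbed ones by an amount proportional to $|k|$. Infinitely many eigenvalues therefore never sit alone in a symmetric disk, and your pairing argument (ii) cannot reach them. Separately, (H2) bounds $\jmath$ but not its increments, so it does not by itself give the gap bounds you invoke even when $\sigma\ge1$.

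The missing idea is to use the Hamiltonian structure on the whole complementary subspace at once, as the paper does. Suppose $\mathcal{Q}_{a,\mu}\psi=\lambda\psi$ with $0\neq\psi\in(\mathcal{I}-\Pi_{a,\mu})L^2(\T)$ and $\mu\neq0$. Then $\langle\mathcal{T}_{a,\mu}\psi,\psi\rangle=\lambda\langle\mathcal{X}_\mu\psi,\psi\rangle$, where the left side is real and $\langle\mathcal{X}_\mu\psi,\psi\rangle\in i\R$. So $\Re\lambda\neq0$ forces $\langle\mathcal{T}_{a,\mu}\psi,\psi\rangle=0$; this is the Haragus lemma the paper cites, and $\mu=0$ is handled similarly on $L^2_0(\T)$.

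Unlike the eigenvalues of $\mathcal{Q}$, the quadratic form of $\mathcal{T}_{a,\mu}$ differs from that of $\mathcal{T}_{0,0}$ only by three small pieces:
\begin{enumerate}[(1)]
\item the bounded multiplier $c(a)-c_0-N\alpha\eta^{N-1}$;
\item the Floquet shift of the symbol of $\mathcal{J}_\rho$;
\item the finite-rank change $\Pi_{a,\mu}-\Pi_{0,0}$.
\end{enumerate}
All three are relatively small uniformly in frequency. So if $\mathcal{T}_{0,0}$ is sign-definite on $(\mathcal{I}-\Pi_{0,0})L^2(\T)$, then $\langle\mathcal{T}_{a,\mu}\psi,\psi\rangle\neq0$ there, and (e) follows for all modes simultaneously.

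Your step (iii) is this same Krein-signature argument, but applied only at isolated collisions; applied globally it replaces (i)--(iii). Your caveat about opposite signatures is on target. What the argument genuinely needs is that $\jmath(\rho)-\jmath(\rho n)$ has a single sign, bounded away from zero, for all $|n|\ge2$. That holds for the paper's examples, but it is stronger than the non-vanishing condition (H3) the paper invokes.
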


\vspace*{0.1in}

\begin{proof}~
\begin{enumerate}[(a)]
 
\item For $\lambda \in B(0; R/2) \setminus B(0; R/3) $, $ \lambda $ belongs to the resolvent set of $ \mathcal{Q}_{0,0} $, meaning $ \mathcal{Q}_{0,0} - \lambda $ is invertible and  $\|(\lambda-\mathcal{Q}_{0,0})^{-1}\|$ is bounded, say  

\beq
\|(\lambda-\mathcal{Q}_{0,0})^{-1}\|_{L^2(\T)\to H^s(\T)}\leq \nu_0.
\eeq

\no We write $\lambda - \mathcal{Q}_{a,\mu} $ as

\beq\label{def12}
\lambda-\mathcal{Q}_{a,\mu}=(\mathcal{I}-\tilde{\mathcal{Q}}_{a,\mu}(\lambda-\mathcal{Q}_{0,0})^{-1})(\lambda-\mathcal{Q}_{0,0})),
\eeq

\no with $ \tilde{\mathcal{Q}}_{a,\mu} = \mathcal{Q}_{a,\mu} - \mathcal{Q}_{0,0} $. To show that $\lambda-\mathcal{Q}_{a,\mu}$ is invertible, we check that $\mathcal{I} - \tilde{\mathcal{Q}}_{a,\mu} (\lambda - \mathcal{Q}_{0,0})^{-1} $ is invertible. First, we establish some bounds. We have

\begin{align*}
 \tilde{\mathcal{Q}}_{a,\mu}&=(\partial_z+i\mu)\circ\mathcal{T}_{a,\mu}-\partial_z\circ\mathcal{T}_{0,0}\\
 &=\partial_z\circ(\mathcal{T}_{a,\mu}-\mathcal{T}_{0,0})+i\mu\mathcal{T}_{a,\mu},
 \end{align*}

\begin{align*}
\mathcal{T}_{a,\mu}-\mathcal{T}_{0,0}&=(c-e^{-i\mu z}\mathcal{J}_\rho e^{i\mu z}-N\alpha \eta^{N-1})-(c-\mathcal{J}_\rho)\\
&=\mathcal{J}_\rho-e^{-i\mu z}\mathcal{J}_\rho e^{i\mu z}-N\alpha \eta^{N-1}.
\end{align*}

\no For sufficiently small $\mu$, a Taylor expansion of $e^{-i\mu z}\mathcal{J}_\rho e^{i\mu z}$ in terms of $\mu$ can be used, and 

\begin{align}
\|\mathcal{T}_{a,\mu}-\mathcal{T}_{0,0}\|_{H^s(\T)\to H^{s-\sigma}(\T)}\footnotemark
&\leq C_1|\mu|+C_2|a|,\\
\|\tilde{\mathcal{Q}}_{a,\mu}\|_{H^s(\T)\to L^2(\T)}
&\leq \|\partial_z\circ(\mathcal{T}_{a,\mu}-\mathcal{T}_{0,0})\|
   +|\mu|\|\mathcal{T}_{a,\mu}\|.
\end{align}

\footnotetext{We use the notation $\|\cdot\|_{X\to Y}$ to denote the operator norm of a bounded linear map from $X$ to $Y$.}

\no where $C_1$, $C_2$ are constants depending on $\mathcal{J}_\rho$ and $N \alpha \eta^{N-1}$. Note that the operator $\mathcal{T}_{a,\mu} - \mathcal{T}_{0,0}$ is a pseudodifferential operator of order $\sigma$ by assumption (H2), and hence maps $H^s(\mathbb{T})$ into $H^{s - \sigma}(\mathbb{T})$. Composing with the derivative $\partial_z$
yields a pseudodifferential operator of order $\sigma + 1$ that maps $H^s(\mathbb{T})$ into $H^{s - \sigma - 1}(\mathbb{T})$. With $s = \max(1, 1 + \sigma)$, we have $s \geq \sigma + 1$, and this composition is bounded into $L^2(\mathbb{T})$. We have

 \beq
\|\tilde{\mathcal{Q}}_{a,\mu}\|_{H^s(\T)\to L^2(\T)}\leq \nu_1(|a|+|\mu|).
 \eeq

\no Consequently, for $a$ and $\mu$ sufficiently small,  

 \beq\label{bound17}
 \|\tilde{\mathcal{Q}}_{a,\mu}(\lambda-\mathcal{Q}_{0,0})^{-1}\|_{L^2(\T)\to L^2(\T)}\leq \nu_0\nu_1(|a|+|\mu|)\leq 1.
 \eeq
 
\no The convergence of the Neumann series implies the invertibility of $\mathcal{I} - \tilde{\mathcal{Q}}_{a,\mu}(\lambda - \mathcal{Q}_{0,0})^{-1}$, so that $\lambda - \mathcal{Q}_{a,\mu}$ is invertible. Therefore, $ \lambda $ belongs to the resolvent set of $\mathcal{Q}_{a,\mu} $,and the decomposition of the spectrum follows.

\item The spectral projection $\Pi_{a,\mu}$ can be computed using the Dunford integral formula:
 
\beq
\Pi_{a,\mu}=\dfrac{1}{2\pi i}\oint_{\partial B(0;R/3)}(\lambda-\mathcal{Q}_{a,\mu})^{-1}d\lambda.
\eeq
 
\no It follows from (a) that the operator $\Pi_{a,\mu}$ is both well defined and bounded. Further, it is immediate that $\Pi_{a,\mu}$ commutes with $\mathcal{Q}_{a,\mu}$. The projection property $\Pi_{a,\mu}^2=\Pi_{a,\mu}$ follows from classical results in complex integration~\cite{Kato}. Next, using \eqref{def12},

 \begin{align}\nonumber
 (\lambda-\mathcal{Q}_{a,\mu})^{-1}&=(\lambda-\mathcal{Q}_{0,0})^{-1}(\mathcal{I}-\tilde{\mathcal{Q}}_{a,\mu}(\lambda-\mathcal{Q}_{0,0})^{-1})^{-1}\\ & =(\lambda-\mathcal{Q}_{0,0})^{-1}\sum_{n=0}^\infty\left(\tilde{\mathcal{Q}}_{a,\mu}(\lambda-\mathcal{Q}_{0,0})^{-1}\right)^n\nonumber \\&= (\lambda-\mathcal{Q}_{0,0})^{-1}+(\lambda-\mathcal{Q}_{0,0})^{-1}\sum_{n=1}^\infty\left(\tilde{\mathcal{Q}}_{a,\mu}(\lambda-\mathcal{Q}_{0,0})^{-1}\right)^n.
 \end{align}
 
\no Consequently, 

 \beq
 \Pi_{a,\mu}-\Pi_{0,0}=\dfrac{1}{2\pi i}\oint_{\partial B(0;R/3)}(\lambda-\mathcal{Q}_{0,0})^{-1}\sum_{n=1}^\infty\left(\tilde{\mathcal{Q}}_{a,\mu}(\lambda-\mathcal{Q}_{0,0})^{-1}\right)^n d\lambda.
 \eeq
 
\no Bounding the sum for small $a$ and $\mu$,
 
 \begin{align}\nonumber
 \|(\lambda-\mathcal{Q}_{0,0})^{-1}\sum_{n=1}^\infty\left(\tilde{\mathcal{Q}}_{a,\mu}(\lambda-\mathcal{Q}_{0,0})^{-1}\right)^n \|_{L^2(\T)\to H^s(\T)}&\leq \nu_0 \sum_{n=1}^\infty(\nu_0\nu_1(|a|+|\mu|))^n\\
 &\leq 2\nu_0^2\nu_1(|a|+|\mu|).
 \end{align}
 
\no Thus, for any $a$ and $\mu$ sufficiently small, 
 
 \beq\label{bound18}
 \|\Pi_{a,\mu}-\Pi_{0,0}\|_{L^2(\T)\to L^2(\T)}=\mathcal{O}(|a|+|\mu|).
 \eeq

 \item For any operator $\mathcal{L}$ satisfying $\|\mathcal{L}\|<1$, $(\mathcal{I}-\mathcal{L})^{-1/2}$ is well defined, . Therefore, using part (b), the operators $\mathcal{U}_{a,\mu}$ in \eqref{e:u} are well defined in both $H^s(\T)$ and $L^2(\T)$. The invertibility of $\mathcal{U}_{a,\mu}$ is proved in \cite[Chapter $\textsc{ii}$, $\mathsection 4$
]{Kato}, as is \eqref{e:u2}.
 
 \item This follows directly from the conjugation formula \eqref{e:u2}.

 \item To show that $\operatorname{spec}_1(\mathcal Q_{a,\mu})\subset i \R$, consider the restriction $\mathcal{Q}_{a,\mu}^\ddagger$ of $\mathcal{Q}_{a,\mu}$ to the spectral subspace $\mathcal{Y}_{a,\mu}=(\mathcal{I}-\Pi_{a,\mu})L^2(\T)$, so that
 
 \beq
\operatorname{spec}_1(\mathcal{Q}_{a,\mu})=\operatorname{spec}_1(\mathcal{Q}_{a,\mu}^\ddagger).
 \eeq
 
 \no Assume $\lambda$ is an eigenvalue of $\mathcal{Q}_{a,\mu}^\ddagger$ and $\psi_\lambda\neq0$ is its associated eigenvector. The vector $\psi_\lambda$ satisfies the condition
 
 \beq
 \psi_\lambda=(I-\Pi_{a,\mu})\psi_\lambda,
 \eeq
 
\no meaning it lies in the spectral subspace corresponding to $\operatorname{spec}_1(\mathcal{Q}_{a,\mu})$, and thus is orthogonal to the projection $\Pi_{a,\mu}$. Recall that $\mathcal{Q}_{a,\mu}=(\partial_z+i\mu)\mathcal{T}_{a,\mu}$ and 
 
\beq
\mathcal{T}_{a,\mu}=\mathcal{T}_{0,0}+\tilde{\mathcal{T}}_{a,\mu},
\eeq

 \no where, for $a$ and $\mu$ sufficiently small, $\tilde{\mathcal{T}}_{a,\mu}$ is a bounded perturbation with bound proportional to $|a|+|\mu|$:
 
 \beq\label{e:t}
 \|\tilde{\mathcal{T}}_{a,\mu}\|_{H^s(\T)\to H^{s-\sigma}(\T)}\leq \nu_2(|a|+|\mu|). 
 \eeq 
 
 \no From \eqref{bound18}, 
 
 \beq\label{e:pi}
 \|\tilde{\Pi}_{a,\mu}\|_{L^2(\T)\to L^2(\T)}=\|\Pi_{a,\mu}-\Pi_{0,0}\|_{L^2(\T)\to L^2(\T)}\leq \nu_3(|a|+|\mu|).
 \eeq
 
 \no We compute
 
 \begin{align}\nonumber
 \left\langle\mathcal{T}_{a,\mu}\psi_\lambda,\psi_\lambda \right\rangle&=\langle\mathcal{T}_{a,\mu}(I-\Pi_{a,\mu})\psi_\lambda,(I-\Pi_{a,\mu})\psi_\lambda\rangle\\&=\langle\mathcal{T}_{0,0}(I-\Pi_{a,\mu})\psi_\lambda,(I-\Pi_{a,\mu})\psi_\lambda\rangle+\langle\tilde{\mathcal{T}}_{a,\mu}(I-\Pi_{a,\mu})\psi_\lambda,(I-\Pi_{a,\mu})\psi_\lambda\rangle\nonumber\\&=\langle\mathcal{T}_{0,0}(I-\Pi_{0,0})\psi_\lambda,(I-\Pi_{0,0})\psi_\lambda\rangle-\langle\mathcal{T}_{0,0}\tilde{\Pi}_{a,\mu}\psi_\lambda,(I-\Pi_{0,0})\psi_\lambda\rangle\nonumber\\&\quad-\langle\mathcal{T}_{0,0}(I-\Pi_{a,\mu})\psi_\lambda,\tilde{\Pi}_{a,\mu}\psi_\lambda\rangle+\langle\tilde{\mathcal{T}}_{a,\mu}(I-\Pi_{a,\mu})\psi_\lambda,(I-\Pi_{a,\mu})\psi_\lambda\rangle.
 \end{align}
 
 \no Since the spectrum of the restriction of $\mathcal{Q}_{0,0}$ to $(I-\Pi_{0,0})L^2(\T)$ consists of the eigenvalues $
 \jmath(\rho)-\jmath(\rho(n+\mu)),\quad n\in\Z\setminus \{-1,0,1\},
$ using Hypothesis (H3) and taking into account the estimates \eqref{e:t} and \eqref{e:pi}, we obtain that 
 
\beq
\left\langle\mathcal{T}_{a,\mu}\psi_\lambda,\psi_\lambda \right\rangle\neq0.
\eeq

 \no Lemma 4.4 of \cite{Haragus2017TransverseModel}, applicable here, states that if $\Re(\lambda)\neq 0$ then $\left\langle\mathcal{T}_{a,\mu}\psi_\lambda,\psi_\lambda\right\rangle=0$. It follows that $\left\langle\mathcal{T}_{a,\mu}\psi_\lambda,\psi_\lambda\right\rangle\neq 0$ implies that $\Re(\lambda)=0$.

\end{enumerate}

\end{proof}
 
\begin{lemma}\label{l:b2}
For $a$ and $\mu$ sufficiently small, the following statements hold.

\begin{enumerate}
\item The operator $\Pi_{a,\mu}$ is skew-Hamiltonian, i.e., it satisfies the condition

\begin{equation}
\mathcal{X}_\mu\Pi_{a,\mu}=\Pi^\ast_{a,\mu}\mathcal{X}_\mu,\quad \text{for}~\mu\neq0,\quad\mathcal{X}_0\Pi_{a,0}=\Pi^\ast_{a,0}\mathcal{X}_0,\quad\text{in}~L^2_0(\T).
\end{equation}

\no Moreover, it preserves reversibility, 
         
\begin{equation}
\overline{\mathcal{P}}\Pi_{a,\mu}=\Pi_{a,\mu}\overline{\mathcal{P}}.
\end{equation}

\no Additionally, $\Pi_{a,0}[1]=1$ and $\Pi_{a,0}\mathcal{A}_0=\mathcal{A}_0\Pi_{a,0}$, which implies that $\Pi_{a,0}$ leaves $L^2_0(\T)$ invariant.

\item The operators $\mathcal{U}_{a,\mu}$ are symplectic, i.e., they satisfy

\beq
\mathcal{U}^\ast_{a,\mu} \mathcal{X}_\mu \mathcal{U}_{a,\mu} = \mathcal{X}_\mu \quad \text{for}~\mu \neq 0, \quad \mathcal{U}^\ast_{a,0} \mathcal{X}_0 \mathcal{U}_{a,0} = \mathcal{X}_0 \quad \text{in}~L^2_0(\T),
\eeq

\no and reversibility preserving. Moreover, $\mathcal{U}_{a,0}[1] = 1$ and $\mathcal{U}_{a,0} \mathcal{A}_0 = \mathcal{A}_0 (\mathcal{U}^{\ast}_{a,0})^{-1}$, implying that $\mathcal{U}_{a,0}$ also leaves $L^2_0(\T)$ invariant.\\

\item Both $\Pi_{a,0}$ and $\mathcal{U}_{a,0}$ are real operators, i.e., $\overline{\Pi}_{a,0} = \Pi_{a,0}$ and $\overline{\mathcal{U}}_{a,0} = \mathcal{U}_{a,0}$.\\

\item The derivatives of the operators $\Pi_{a,0}$ and $\mathcal{U}_{a,0}$ with respect to $\mu$, evaluated at $\mu = 0$, 

\beq
\dot{\Pi}_{a,0} = \left.\frac{\partial \Pi_{a,\mu}}{\partial \mu}\right|_{\mu=0}, \quad \text{and} \quad \dot{\mathcal{U}}_{a,0} = \left.\frac{\partial \mathcal{U}_{a,\mu}}{\partial \mu}\right|_{\mu=0},
\eeq

\no are imaginary.

\end{enumerate}
\end{lemma}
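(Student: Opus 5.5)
The argument is structural and follows Berti--Maspero--Ventura and Maspero--Radakovic: each property is derived from the fact that $\mathcal{Q}_{a,\mu}=\mathcal{A}_\mu\mathcal{T}_{a,\mu}$ is complex Hamiltonian and reversible, using the Dunford integral \eqref{e:pr} for $\Pi_{a,\mu}$ and the explicit formula \eqref{e:u} for $\mathcal{U}_{a,\mu}$.

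\textbf{Item 1.} For $\mu\neq0$ we have $\mathcal{X}_\mu\mathcal{Q}_{a,\mu}=\mathcal{T}_{a,\mu}$ and $\mathcal{Q}_{a,\mu}^\ast=-\mathcal{T}_{a,\mu}\mathcal{A}_\mu$. It follows that
\[
\mathcal{X}_\mu(\lambda-\mathcal{Q}_{a,\mu})=(\lambda+\mathcal{Q}_{a,\mu}^\ast)\mathcal{X}_\mu,
\quad\text{hence}\quad
\mathcal{X}_\mu(\lambda-\mathcal{Q}_{a,\mu})^{-1}=(\lambda+\mathcal{Q}^\ast_{a,\mu})^{-1}\mathcal{X}_\mu .
\]
Integrate over the circle $\partial B(0;R/3)$ and substitute $\lambda\mapsto-\bar\lambda$; the circle is invariant under this map, and the adjoint conjugates the contour. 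This gives $\mathcal{X}_\mu\Pi_{a,\mu}=\Pi^\ast_{a,\mu}\mathcal{X}_\mu$. The only care needed is the orientation bookkeeping when taking adjoints of the contour integral.

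Reversibility comes from Lemma~\ref{l:2}(2), $\overline{\mathcal{P}}\mathcal{Q}_{a,\mu}=-\mathcal{Q}_{a,\mu}\overline{\mathcal{P}}$. Since $\overline{\mathcal{P}}$ is antilinear, $\overline{\mathcal{P}}(\lambda-\mathcal{Q})^{-1}=(\bar\lambda+\mathcal{Q})^{-1}\overline{\mathcal{P}}$. The same change of variables then yields $\overline{\mathcal{P}}\Pi_{a,\mu}=\Pi_{a,\mu}\overline{\mathcal{P}}$.

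For $\mu=0$, the identity $\mathcal{Q}_{a,0}=\partial_z\mathcal{T}_{a,0}$ has range in $L^2_0$, so $\Pi_{a,0}$ maps $L^2_0$ into itself up to its action on constants. I also need $\Pi_{a,0}[1]=1$. The key point is that $\mathcal{Q}_{a,0}$ has the generalized kernel spanned by $\eta'$, $\partial_a\eta$ (or $\partial_c$), and a constant direction coming from the $b$-family in \eqref{e:qq}. Thus $1\in\mathscr{F}_{a,0}$: one checks $\mathcal{Q}_{a,0}$ maps $1$ into span$\{\eta'\}$-type elements of $\mathscr{F}_{a,0}$, so $\Pi_{a,0}1=1$. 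The relation $\Pi_{a,0}\mathcal{A}_0=\mathcal{A}_0\Pi_{a,0}$ follows by conjugating resolvents via $\partial_z\mathcal{T}\partial_z=\partial_z(\mathcal{T}\partial_z)$, which is the standard intertwining $\mathcal{Q}^\ast_{a,0}$-type argument.

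\textbf{Item 2.} Plug the item-1 identities into \eqref{e:u}. Set $\mathcal{R}=(\Pi_{a,\mu}-\Pi_{0,0})^2$. Then $\mathcal{X}_\mu\mathcal{R}=\mathcal{R}^\ast\mathcal{X}_\mu$, and so, via the power series, $\mathcal{X}_\mu(\mathcal{I}-\mathcal{R})^{-1/2}=((\mathcal{I}-\mathcal{R})^{-1/2})^\ast\mathcal{X}_\mu$. It follows that $\mathcal{U}^\ast\mathcal{X}_\mu\mathcal{U}$ equals
\[
\mathcal{X}_\mu\bigl[\Pi_{0,0}\Pi_{a,\mu}+(\mathcal{I}-\Pi_{0,0})(\mathcal{I}-\Pi_{a,\mu})\bigr](\mathcal{I}-\mathcal{R})^{-1}\bigl[\Pi_{a,\mu}\Pi_{0,0}+(\mathcal{I}-\Pi_{a,\mu})(\mathcal{I}-\Pi_{0,0})\bigr].
\]
Kato's identity says that the product of the brackets, together with the middle factor $(\mathcal{I}-\mathcal{R})^{-1}$, equals $\mathcal{I}$, since $\mathcal{U}^{-1}\mathcal{U}=\mathcal{I}$ and $\mathcal{R}$ commutes with both projectors. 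This gives the symplectic identity. Reversibility and $\mathcal{U}_{a,0}[1]=1$ are inherited directly from item 1, as are the $\mathcal{A}_0$ relation and the invariance of $L^2_0$.

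\textbf{Items 3 and 4.} By Lemma~\ref{l:2}(1), $\overline{\mathcal{Q}_{a,\mu}\psi}=\mathcal{Q}_{a,-\mu}\bar\psi$. Conjugating the Dunford integral (the contour is symmetric under $\lambda\mapsto\bar\lambda$) gives $\overline{\Pi_{a,\mu}}=\Pi_{a,-\mu}$, and likewise $\overline{\mathcal{U}_{a,\mu}}=\mathcal{U}_{a,-\mu}$. At $\mu=0$ this says both operators are real. Differentiating the identity in $\mu$ at $\mu=0$ gives $\overline{\dot\Pi_{a,0}}=-\dot\Pi_{a,0}$, so $\dot\Pi_{a,0}$ is imaginary, and the same holds for $\dot{\mathcal{U}}_{a,0}$. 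Differentiability in $\mu$ follows from analyticity of the resolvent in $\mu$, via the Taylor expansion of $e^{-i\mu z}\mathcal{J}_\rho e^{i\mu z}$ used in Lemma~\ref{l:31a}.

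\textbf{Main obstacle.} I expect the hardest step to be the $\mu=0$ statements: $\Pi_{a,0}[1]=1$ and the $\mathcal{A}_0$-commutation. There $\mathcal{X}_0$ is defined only on $L^2_0$, so the identities must be handled on that subspace, and the generalized kernel must be identified explicitly. My first attempt would be to differentiate the profile equation \eqref{e:q} in $z$, in $a$, and in the constant $b$, which exhibits explicit elements of $\mathscr{F}_{a,0}$ including $1$ modulo the others.
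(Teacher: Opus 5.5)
\textbf{Assessment: genuine gap at $\mu=0$.} Your arguments for $\mu\neq0$ are the standard Berti--Maspero--Ventura/Maspero--Radakovic ones and are fine: the resolvent identity $\mathcal{X}_\mu(\lambda-\mathcal{Q}_{a,\mu})^{-1}=(\lambda+\mathcal{Q}^\ast_{a,\mu})^{-1}\mathcal{X}_\mu$, reversibility via the antilinear $\overline{\mathcal{P}}$, and symplecticity of $\mathcal{U}_{a,\mu}$ via Kato's identity. Items 3--4 via $\overline{\Pi_{a,\mu}}=\Pi_{a,-\mu}$ are also fine. The paper gives no proof of its own; it only cites \cite[Lemma~3.4]{MasperoRadakovic2024}, which treats $N=2$. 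The problem is the $\mu=0$ claims.

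\textbf{Why $\Pi_{a,0}[1]=1$ fails for $N\geq3$.} Your key step, that $\mathcal{Q}_{a,0}$ maps $1$ into $\operatorname{span}\{\eta'\}$, holds only for $N=2$. In general
\[
\mathcal{Q}_{a,0}1=\partial_z(c-1-N\alpha\eta^{N-1})=-N(N-1)\alpha\eta^{N-2}\eta'.
\]
For $N\geq3$ this is a counterexample, not a missing check.
\begin{itemize}
\item By items 1 and 3, $\Pi_{a,0}$ commutes with $z\mapsto-z$. So $\mathscr{F}_{a,0}$ splits into a two-dimensional even part and a one-dimensional odd part.
\item The odd part is $\operatorname{span}\{\eta'\}$, since $\mathcal{T}_{a,0}\eta'=0$.
\item $\mathcal{Q}_{a,0}$ maps even functions to odd ones.
\end{itemize}
Hence $1\in\mathscr{F}_{a,0}$ would force $\eta^{N-2}\eta'\in\operatorname{span}\{\eta'\}$. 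This is false: to leading order $\eta^{N-2}\eta'=-a^{N-1}\cos^{N-2}z\sin z$, which is $-\tfrac{a^2}{2}\sin 2z$ for $N=3$. So $\Pi_{a,0}[1]\neq1$ and $\mathcal{U}_{a,0}[1]\neq1$ when $N\geq3$. This matches the paper's own Lemma~\ref{l:b1}: for mKdV, $\psi_0(a,0)=\mathcal{U}_{a,0}[1/\sqrt2]$ contains the non-constant term $\frac{a^2}{2\sqrt2\rho^2}\cos 2z$. Your fallback of differentiating the profile equation leads to the same place. The even generalized-kernel vectors it produces are sent by $\mathcal{Q}_{a,0}$ into $\operatorname{span}\{\eta'\}$, and $\mathcal{Q}_{a,0}1$ is not.

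\textbf{What holds for all $N$: the adjoint versions.}
\begin{itemize}
\item $\mathcal{Q}^\ast_{a,0}=-\mathcal{T}_{a,0}\partial_z$ kills constants, so $\Pi^\ast_{a,0}[1]=1$, i.e.\ $\langle\Pi_{a,0}f,1\rangle=\langle f,1\rangle$. This is your observation that $\mathcal{Q}_{a,0}$ has range in $L^2_0(\T)$, and it is what gives $L^2_0$-invariance.
\item Your intertwining $\mathcal{Q}_{a,0}\partial_z=\partial_z\mathcal{T}_{a,0}\partial_z=-\partial_z\mathcal{Q}^\ast_{a,0}$, integrated over the contour, gives $\Pi_{a,0}\mathcal{A}_0=\mathcal{A}_0\Pi^\ast_{a,0}$, not $\mathcal{A}_0\Pi_{a,0}$.
\item The commuting version is false for every $N$ once $a\neq0$. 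It would force the even part of $\mathscr{F}_{a,0}$ to be $\operatorname{span}\{1,\eta\}$, but $\mathcal{Q}_{a,0}\eta=-N(N-1)\alpha\eta^{N-1}\eta'\notin\operatorname{span}\{\eta'\}$.
\item The adjoint form is what item 2 needs. It yields $\mathcal{U}_{a,0}\mathcal{A}_0=\mathcal{A}_0(\mathcal{U}^\ast_{a,0})^{-1}$; ``inheriting'' a commutation would instead give $\mathcal{U}_{a,0}\mathcal{A}_0=\mathcal{A}_0\mathcal{U}_{a,0}$.
\item After an integration by parts, it also gives the $\mu=0$ skew-Hamiltonian identity in pairing form, $\langle\mathcal{X}_0\Pi_{a,0}f,g\rangle=\langle\mathcal{X}_0f,\Pi_{a,0}g\rangle$ for $f,g\in L^2_0(\T)$. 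You did not treat this case. Read literally as an operator identity on $L^2_0$, it fails for $N\geq3$, because $\Pi^\ast_{a,0}$ then does not preserve $L^2_0$.
\end{itemize}

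So the $\mu=0$ parts of items 1--2 can only be proved in this corrected form. The claims $\Pi_{a,0}[1]=1$, $\mathcal{U}_{a,0}[1]=1$, and the later constancy of $\psi_0(a,0)$ hold for $N=2$ but not for $N\geq3$.
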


\begin{proof}
We refer to \cite[Lemma~3.4]{MasperoRadakovic2024} for a detailed proof in a similar setting.  
\end{proof}

\section{Proof of the expansion of the basis $\Sigma$}\label{a:3}

This entire appendix is devoted to the proof of Lemma \ref{l:b1}. The lengthy proof is split up into a sequence of lemmas.

\begin{lemma}\label{l:pd1}
    \begin{align}
    \partial^l_a\Pi_{0,0}&=0, \quad\quad l<N-1,\\
    \partial_a^{N-1}\Pi_{0,0}&=\dfrac{1}{2\pi i}\oint_{\partial B(0; R/3)} (\lambda-\mathcal{Q}_{0,0})^{-1}(\partial^{N-1}_a \mathcal{Q}_{0,0})(\lambda-\mathcal{Q}_{0,0})^{-1} d\lambda.
    \end{align}
\end{lemma}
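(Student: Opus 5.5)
The natural route is to differentiate the Dunford integral \eqref{e:pr} with respect to $a$ at $(a,\mu)=(0,0)$, using the resolvent identity. Here $\partial_a^l\Pi_{0,0}$ means $\partial_a^l\Pi_{a,0}$ evaluated at $a=0$, and similarly for $\mathcal{Q}$. By Lemma~\ref{l:31}(a), the resolvent $(\lambda-\mathcal{Q}_{a,0})^{-1}$ exists for all $\lambda\in\partial B(0;R/3)$ and all small $a$. It is given by the convergent Neumann series in the proof of that lemma and is uniformly bounded there. Since $\eta$ and $c$ depend analytically on $a$ (Theorem~\ref{t:1}), the map $a\mapsto\mathcal{Q}_{a,0}$ is analytic as a map $H^s(\T)\to L^2(\T)$. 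The resolvent is therefore analytic in $a$ uniformly on the contour, and we may differentiate under the integral sign.

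The key observation concerns the $a$-dependence of $\mathcal{Q}_{a,0}=\partial_z\circ(c(a)-\mathcal{J}_\rho-N\alpha\eta^{N-1})$. By \eqref{E:w_ansatz}, $\eta^{N-1}=a^{N-1}\cos^{N-1}z+\mathcal{O}(a^{N})$. By \eqref{e:c}, $c(a)-c_0=\mathcal{O}(a^\tau)$ with $\tau\geq N-1$ in both parity cases: $\tau=N-1$ for odd $N$, $\tau=2N-2$ for even $N$. Hence
\[
\mathcal{Q}_{a,0}-\mathcal{Q}_{0,0}=\mathcal{O}(a^{N-1}) \quad\text{in } \mathcal{L}(H^s(\T),L^2(\T)),
\]
so $\partial_a^l\mathcal{Q}_{0,0}=0$ for $1\leq l<N-1$. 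The first nonzero derivative is
\[
\partial_a^{N-1}\mathcal{Q}_{0,0}=(N-1)!\,\partial_z\circ\bigl(c_{N-1}\delta_{N\,\rm odd}-N\alpha\cos^{N-1}z\bigr).
\]
The exact form is not needed for this lemma.

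Next I would apply the general Leibniz/Fa\`a di Bruno formula for derivatives of the resolvent. Repeated use of $\partial_a(\lambda-\mathcal{Q})^{-1}=(\lambda-\mathcal{Q})^{-1}(\partial_a\mathcal{Q})(\lambda-\mathcal{Q})^{-1}$ shows that $\partial_a^l(\lambda-\mathcal{Q}_{a,0})^{-1}$ is a finite sum of terms of the form
\[
R\,(\partial_a^{k_1}\mathcal{Q})\,R\,(\partial_a^{k_2}\mathcal{Q})\cdots R, \qquad k_1+\dots+k_j=l,\ k_i\geq1,
\]
where $R=(\lambda-\mathcal{Q}_{a,0})^{-1}$. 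At $a=0$, every factor with $k_i<N-1$ vanishes. If $l<N-1$, every term contains such a factor, so every term vanishes. If $l=N-1$, the only surviving term is the one with $j=1$ and $k_1=N-1$, namely $R_0(\partial_a^{N-1}\mathcal{Q}_{0,0})R_0$ with coefficient one. Integrating over $\partial B(0;R/3)$ and dividing by $2\pi i$ gives both claimed identities.

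The main obstacle is justifying the termwise differentiation in the right operator topology. The perturbation is unbounded on $L^2(\T)$: it has order $\sigma+1$, by (H2). I would handle this as in the proof of Lemma~\ref{l:31a}(a). Work with $R_0:L^2(\T)\to H^s(\T)$ bounded by $\nu_0$ and $\partial_a^k\mathcal{Q}:H^s(\T)\to L^2(\T)$ bounded, so that each product is a bounded operator on $L^2(\T)$. Analyticity of the Neumann series in $a$ then yields uniform convergence of the derivatives on the compact contour.
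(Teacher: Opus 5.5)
Your proposal is correct and follows essentially the same route as the paper: differentiate the Dunford integral, expand $\partial_a^l$ of the resolvent into the Fa\`a di Bruno-type sum of products $R\,(\partial_a^{k_1}\mathcal{Q})\,R\cdots R$, and use $\partial_a^{l}\mathcal{Q}_{0,0}=0$ for $1\le l<N-1$, so that at order $N-1$ only the single-factor term with coefficient one survives. Beyond the paper, you also justify differentiating under the integral sign, and you make explicit that the vanishing claim requires $l\ge 1$, since $\partial_a^0\Pi_{0,0}=\Pi_{0,0}\neq 0$.
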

\begin{proof}

\[
\Pi_{a,\mu} = \frac{1}{2\pi i} \oint_{\partial B(0; R/3)} \mathcal{R}(\lambda)\, d\lambda,
\]
where $ \mathcal{R}_{a,\mu}(\lambda) = (\lambda - \mathcal{Q}_{a,\mu})^{-1} $ denotes the resolvent of $ \mathcal{Q}_{a,\mu} $, and the contour encloses the part of the spectrum onto whose eigenspace we wish to project. To compute the $n$th derivative of $\Pi_{a,\mu}$ with respect to $a$, we use

\beq
\partial_a^n \Pi_{a,\mu} = \frac{1}{2\pi i} \oint_{\partial B(0; R/3)} \frac{\partial^n \mathcal{R}_{a,\mu}(\lambda)}{\partial a^n}\, d\lambda.
\eeq


The $n$th derivative of the resolvent $\mathcal{R}_{a,\mu}(\lambda)$ can be expressed through Fa\`a di Bruno-type expansions involving compositions of operator derivatives, see for instance \cite{cons96}. This requires a minor generalization of the standard multivariate derivatives of composite functions to an operator-valued analogue obtained by iterating the resolvent identity and applying the Leibniz rule.

\beq
 \partial_a^n \mathcal{R}_{a,\mu}(\lambda) = \sum_{(j_1, \dots, j_k) \in \mathcal{C}(n)} \frac{n!}{j_1! \cdots j_k!} \mathcal{R}_{a,\mu} \mathcal{Q}_{a,\mu}^{(j_1)} \mathcal{R}_{a,\mu} \cdots \mathcal{Q}_{a,\mu}^{(j_k)} \mathcal{R}_{a,\mu},
\eeq
where the superindices denote derivatives with respect to $a$, and $\mathcal{C}(n)$ denotes the set of all decompositions of the integer $n$ into $k \geq 1$ strictly positive integers, {i.e.}, $j_1 + \dots + j_k = n$. It follows that

\beq\label{pia}
\partial_a^n \Pi_{a,\mu} = \frac{1}{2\pi i} \oint_{\partial B(0; R/3)} \sum_{(j_1, \dots, j_k) \in \mathcal{C}(n)} \frac{n!}{j_1! \cdots j_k!} \mathcal{R}_{a,\mu} \mathcal{Q}_{a,\mu}^{(j_1)} \mathcal{R}_{a,\mu} \cdots \mathcal{Q}_{a,\mu}^{(j_k)} \mathcal{R}_{a,\mu} \, d\lambda.
\eeq

\no Note that for $ \ell < N-1 $, $ \partial_a^\ell \mathcal{Q}_{0,0} = 0 $. Consequently, for any $ 
\ell < N-1 $,
\[
\partial_a^{\ell} \Pi_{0,0} = 0, 
\]

\no where here and below, $\partial_a^\ell \Pi_{0,0}=\partial_a^\ell \Pi_{a,\mu}|_{(a,\mu)=(0,0)}$.

Next, for $n=N-1$, in the Faà di Bruno-type expansion, an $N-1$-th order derivative can only appear if $k=1$. All decompositions with $k>1$ have terms that are identically zero. 
Then

\beq
\partial_a^{N-1} \Pi_{0,0} = \frac{1}{2\pi i} \oint_{\partial B(0; R/3)} (\lambda - \mathcal{Q}_{0,0})^{-1} \, \partial_a^{N-1} \mathcal{Q}_{0,0} \, (\lambda - \mathcal{Q}_{0,0})^{-1} \, d\lambda.
\eeq

\end{proof}

\begin{lemma}\label{l:ud2}
   \begin{align}
    \partial^l_a\mathcal{U}_{0,0}&=0, \quad\quad l<N-1,\\
    (\partial^{N-1}_a \mathcal{U}_{0,0})\Pi_{0,0}&=(\partial^{N-1}_a \Pi_{0,0})\Pi_{0,0}.
    \end{align}
\end{lemma}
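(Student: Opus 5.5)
We will prove both claims by differentiating the explicit formula \eqref{e:u} for $\mathcal{U}_{a,\mu}$ in $a$ at $(a,\mu)=(0,0)$. The only input needed is Lemma~\ref{l:pd1}, which gives $\partial_a^l\Pi_{0,0}=0$ for $l<N-1$. Write $\mathcal{U}_{a,\mu}=\mathcal{S}_{a,\mu}\,\mathcal{V}_{a,\mu}$, where
\[
\mathcal{S}_{a,\mu}=(\mathcal{I}-(\Pi_{a,\mu}-\Pi_{0,0})^2)^{-1/2},\qquad \mathcal{V}_{a,\mu}=\Pi_{a,\mu}\Pi_{0,0}+(\mathcal{I}-\Pi_{a,\mu})(\mathcal{I}-\Pi_{0,0}).
\]
Here $\Pi_{0,0}$ is a fixed operator and does not depend on $a$. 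Set $\mathcal{E}_a=\Pi_{a,0}-\Pi_{0,0}$, taking $\mu=0$ throughout. This map is analytic in $a$ near $0$, because $\mathcal{Q}_{a,0}$ depends analytically on $a$ (Theorem~\ref{t:1}) and the resolvent depends analytically on $\mathcal{Q}_{a,0}$. By Lemma~\ref{l:pd1}, $\mathcal{E}_a=\mathcal{O}(a^{N-1})$ in operator norm.

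\emph{First claim.} Since $\mathcal{E}_a=\mathcal{O}(a^{N-1})$, we have $\mathcal{E}_a^2=\mathcal{O}(a^{2N-2})$. Expanding the power series $(\mathcal{I}-\mathcal{L})^{-1/2}=\mathcal{I}+\tfrac12\mathcal{L}+\dots$, which converges because $\|\mathcal{E}_a^2\|<1$ for small $a$, gives $\mathcal{S}_{a,0}=\mathcal{I}+\mathcal{O}(a^{2N-2})$. Because $N\ge2$, we have $2N-2\ge N$, so $\partial_a^l\mathcal{S}_{0,0}=0$ for $1\le l\le N-1$, and $\mathcal{S}_{0,0}=\mathcal{I}$. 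For the second factor, write $\mathcal{V}_{a,0}=\mathcal{I}-\Pi_{0,0}+(2\Pi_{a,0}-\mathcal{I})\Pi_{0,0}+\dots$. More precisely, $\mathcal{V}_{a,0}=\Pi_{a,0}\Pi_{0,0}+\mathcal{I}-\Pi_{0,0}-\Pi_{a,0}+\Pi_{a,0}\Pi_{0,0}$. It is affine in $\Pi_{a,0}$, with $\partial_a^l\mathcal{V}_{0,0}=\partial_a^l\Pi_{0,0}\,(2\Pi_{0,0}-\mathcal{I})$ for $l\ge1$. Lemma~\ref{l:pd1} makes this vanish for $l<N-1$. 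At $a=0$, $\mathcal{V}_{0,0}=\Pi_{0,0}^2+(\mathcal{I}-\Pi_{0,0})^2=\mathcal{I}$. The Leibniz rule applied to $\mathcal{S}\mathcal{V}$ then gives $\partial_a^l\mathcal{U}_{0,0}=0$ for $1\le l<N-1$; the case $l=0$ is read as the trivial statement $\mathcal{U}_{0,0}=\mathcal{I}$.

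\emph{Second claim.} We apply Leibniz at order $N-1$. Every term with a derivative landing on $\mathcal{S}$ vanishes, since $\partial_a^l\mathcal{S}_{0,0}=0$ for $1\le l\le N-1$. This leaves
\[
\partial_a^{N-1}\mathcal{U}_{0,0}=\partial_a^{N-1}\mathcal{V}_{0,0}=\partial_a^{N-1}\Pi_{0,0}\,(2\Pi_{0,0}-\mathcal{I}).
\]
Right-multiplying by $\Pi_{0,0}$ and using $\Pi_{0,0}^2=\Pi_{0,0}$ gives $(2\Pi_{0,0}-\mathcal{I})\Pi_{0,0}=\Pi_{0,0}$. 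Hence $(\partial_a^{N-1}\mathcal{U}_{0,0})\Pi_{0,0}=(\partial_a^{N-1}\Pi_{0,0})\Pi_{0,0}$, as claimed.

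The main point to check carefully is that the square-root factor contributes nothing at order $N-1$. This relies on the bound $2N-2>N-1$, valid because $N\ge2$, and on the legitimacy of differentiating the operator power series term by term. That follows from analyticity of $a\mapsto\Pi_{a,0}$ and the bound $\|\Pi_{a,0}-\Pi_{0,0}\|<1$ from Lemma~\ref{l:31}(b).
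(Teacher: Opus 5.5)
Your proof is correct and takes essentially the paper's route: the same factorization $\mathcal{U}=(\mathcal{I}-\delta^2)^{-1/2}\bigl(\mathcal{I}+\delta(2\Pi_{0,0}-\mathcal{I})\bigr)$, the same use of Lemma~\ref{l:pd1} to get $\delta=\mathcal{O}(a^{N-1})$ and $\delta^2=\mathcal{O}(a^{2N-2})$, and the same identity $(2\Pi_{0,0}-\mathcal{I})\Pi_{0,0}=\Pi_{0,0}$. Your version is slightly more careful than the paper's: you fix $\mu=0$, exclude $l=0$, and compute $\partial_a^{N-1}\mathcal{U}_{0,0}$ exactly before right-multiplying by $\Pi_{0,0}$. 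You should, however, delete the first expression for $\mathcal{V}_{a,0}$ (the one ending in ``$+\dots$''), which is not an identity, and keep only the exact affine formula you give right after it.
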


\begin{proof}
   Recall from \eqref{e:u},
   
\begin{equation}
\mathcal{U}_{a,\mu} = \left( \mathcal{I} - (\Pi_{a,\mu} - \Pi_{0,0})^2 \right)^{-1/2} 
\left[ \Pi_{a,\mu} \Pi_{0,0} + (\mathcal{I} - \Pi_{a,\mu})(\mathcal{I} - \Pi_{0,0}) \right].
\end{equation}

\no Let $ \delta = \Pi_{a,\mu} - \Pi_{0,0} $, and write $ \mathcal{U}_{a,\mu} = \mathcal{U}^\mathrm{I}_{a,\mu} \mathcal{U}^\mathrm{II}_{a,\mu} $, where

\begin{equation}
\mathcal{U}^\mathrm{I}_{a,\mu} = (\mathcal{I} - \delta^2)^{-1/2}, \qquad 
\mathcal{U}^\mathrm{II}_{a,\mu} = \Pi_{a,\mu} \Pi_{0,0} + (\mathcal{I} - \Pi_{a,\mu})(\mathcal{I} - \Pi_{0,0}).
\end{equation}

By Lemma \ref{l:pd1}, we have $\partial_a^l\Pi_{0,0}=0$, for $l<N-1$. Hence, 

\[
\delta=\mathcal{O}(a^{N-1}), \quad\text{as}\quad a\to 0,
\]

\no and therefore

\[
\delta^2=\mathcal{O}(a^{2N-2}), \quad\text{as}\quad a\to 0.
\]

\no First, we write $\mathcal{U}^\mathrm{II}_{a,\mu}$ in terms of $\delta$. A direct calculation gives

\begin{align}\label{eq:secondone}
    \mathcal{U}^\mathrm{II}_{a,\mu}&=\Pi_{a,\mu}\Pi_{0,0}+(\mathcal{I}-\Pi_{a,\mu})(\mathcal{I}-\Pi_{0,0})\\
    &=2\Pi_{a,\mu}\Pi_{0,0}+\mathcal{I}-\Pi_{a,\mu}-\Pi_{0,0}\nonumber\\
    &=\mathcal{I}+\delta(2\Pi_{0,0}-\mathcal{I}).\nonumber
\end{align}

Next using the binomial series around $\delta^2=0$,

\begin{equation}\label{eq:binomial}
\mathcal{U}^\mathrm{I}_{a,\mu}=\mathcal{I}+\sum_{k=1}^\infty c_k(\delta^2)^k,
\end{equation}

\no with $c_k=(-1)^k \binom{k}{-1/2}$. In particular 

\begin{equation}
\mathcal{U}^\mathrm{I}_{a,\mu}=\mathcal{I}+\mathcal{O}(\delta^2)=\mathcal{I}+\mathcal{O}(a^{2N-2}).
\end{equation}

\no Therefore,

\begin{equation}
\mathcal{U}_{a,\mu}=\mathcal{U}^\mathrm{I}_{a,\mu}\mathcal{U}^\mathrm{II}_{a,\mu}=(\mathcal{I}+\mathcal{O}(\delta^2))(\mathcal{I}+\delta(2\Pi_{0,0}-\mathcal{I}))=\mathcal{I}+\delta(2\Pi_{0,0}-\mathcal{I})+\mathcal{O}(\delta^2).
\end{equation}

\no With $\delta=\mathcal{O}(a^{N-1})$, this shows that every coefficient of $a^l$, $l<N-1$, in the Taylor series of $\mathcal{U}_{a,\mu}$ is zero. Concretely, 

\beq
\partial_a^l\mathcal{U}_{0,0}=0,~~~~l<N-1,
\eeq

\no which proves the first part of the lemma. 

We move on to the second part. 
Multiplying $\mathcal{U}_{a,\mu}$ by $\Pi_{0,0}$,

\beq
\mathcal{U}_{a,\mu}\Pi_{0,0}=\mathcal{U}^\mathrm{I}_{a,\mu}\mathcal{U}^\mathrm{II}_{a,\mu}\Pi_{0,0}.
\eeq

\no With $(2\Pi_{0,0}-\mathcal{I})\Pi_{0,0}=\Pi_{0,0}$ (since $\Pi^2_{0,0}=\Pi_{0,0}$), 

\begin{align}
\mathcal{U}^\mathrm{II}_{a,\mu}\Pi_{0,0}=(\mathcal{I}+\delta(2\Pi_{0,0}-\mathcal{I}))\Pi_{0,0}=\Pi_{0,0}+\delta\Pi_{0,0}.
\end{align}
Thus 

\beq
\mathcal{U}_{a,\mu}\Pi_{0,0}=\mathcal{U}^\mathrm{I}_{a,\mu}(\Pi_{0,0}+\delta\Pi_{0,0})=\Pi_{0,0}+\delta\Pi_{0,0}+\mathcal{O}(\delta^2).
\eeq

\no Since $\delta^2=\mathcal{O}(a^{2N-2})$, the $(N-1)$-th derivative at $a=0$ of the $\mathcal{O}(\delta^2)$ term vanishes. Therefore

\beq
\partial^{N-1}_a(\mathcal{U}_{a,\mu}\Pi_{0,0})\big|_{a=0}=\partial^{N-1}_a(\delta\Pi_{0,0})|_{a=0}=(\partial^{N-1}_a \delta)|_{a=0}\Pi_{0,0}.
\eeq

\no Hence, we have

\beq
(\partial^{N-1}_a \mathcal{U}_{0,0})\Pi_{0,0}=(\partial^{N-1}_a \Pi_{0,0})\Pi_{0,0}, 
\eeq

\no which proves the second part of the lemma.
\end{proof}

\begin{lemma}\label{l:l1}
    For $\phi\in\{\psi_1^+(0,\mu)=\cos z,\psi_1^-(0,\mu)=\sin z,\psi_0(0,\mu)=1/\sqrt{2}\}$, 
    
    \beq
    (\partial_\mu^{N-1}\Pi_{0,0})\phi = 0, \quad N\geq2.
    \eeq

\end{lemma}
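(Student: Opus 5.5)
The plan is to show something stronger than the claim: at $a=0$ the projector $\Pi_{0,\mu}$ does not depend on $\mu$ at all for $|\mu|$ small. Then every $\mu$-derivative of $\Pi_{a,\mu}$ at $(a,\mu)=(0,0)$ vanishes identically. Since $N\geq 2$ gives $N-1\geq1$, this yields $(\partial_\mu^{N-1}\Pi_{0,0})\phi=0$ for every $\phi$, and in particular for $\cos z$, $\sin z$ and $1/\sqrt2$.

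The first step is to compute $\Pi_{0,\mu}$ explicitly. By \eqref{E:spec}, $\mathcal{Q}_{0,\mu}$ is a Fourier multiplier with $\mathcal{Q}_{0,\mu}e^{inz}=i\Omega_{n,\mu}e^{inz}$. Its resolvent is therefore also diagonal in the Fourier basis:
\[
(\lambda-\mathcal{Q}_{0,\mu})^{-1}e^{inz}=\frac{1}{\lambda-i\Omega_{n,\mu}}\,e^{inz}.
\]
Inserting this into the Dunford integral \eqref{e:pr} and applying Cauchy's formula mode by mode gives
\[
\Pi_{0,\mu}e^{inz}=\chi_n(\mu)\,e^{inz},
\]
where $\chi_n(\mu)=1$ if $i\Omega_{n,\mu}\in B(0;R/3)$ and $\chi_n(\mu)=0$ if $i\Omega_{n,\mu}\notin\overline{B(0;R/3)}$. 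The mode-by-mode integration is justified by the uniform resolvent bound on $\partial B(0;R/3)$.

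The second step is to identify which modes lie inside the contour, uniformly in small $\mu$.
\begin{itemize}
\item For $n\in\{-1,0,1\}$, \eqref{E:oomega} and the continuity of $\jmath$ (Hypothesis (H1)) give $\Omega_{0,\mu}=\mu(\jmath(\rho)-\jmath(\rho\mu))\to0$ and $\Omega_{\pm1,\mu}=(\pm1+\mu)(\jmath(\rho)-\jmath(\rho(\pm1+\mu)))\to0$ as $\mu\to0$. Here evenness of $\jmath$ is used for $n=-1$. So these three eigenvalues lie in $B(0;R/3)$ for small $\mu$.
\item For $|n|\geq2$, Lemma~\ref{l:31}(a) applied with $a=0$ places $i\Omega_{n,\mu}$ in $\C\setminus\overline{B(0;R/2)}$. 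That separation rests on (H3) together with the growth bound (H2) for large $|n|$.
\end{itemize}
Hence $\chi_n(\mu)=1$ for $|n|\leq1$ and $\chi_n(\mu)=0$ for $|n|\geq2$, independently of $\mu$. So $\Pi_{0,\mu}$ is the orthogonal projector onto ${\rm span}\{e^{-iz},1,e^{iz}\}$ for all small $\mu$.

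The final step is immediate. Because $\Pi_{0,\mu}\equiv\Pi_{0,0}$ on a neighbourhood of $\mu=0$, we get $\partial_\mu^{k}\Pi_{a,\mu}|_{(0,0)}=0$ for every $k\geq1$. Applying this with $k=N-1$ to $\phi\in\{\cos z,\sin z,1/\sqrt2\}\subset{\rm span}\{e^{-iz},1,e^{iz}\}$ proves the lemma.

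The only delicate point is the uniform separation of the modes $|n|\geq 2$ from the contour as $\mu$ varies. I would take this directly from Lemma~\ref{l:31}(a) rather than re-deriving it. If a direct argument were wanted, one would check that $|n+\mu|\,|\jmath(\rho)-\jmath(\rho(n+\mu))|$ is bounded below: for large $|n|$ by (H2), and for finitely many $n$ by (H3) plus continuity in $\mu$.
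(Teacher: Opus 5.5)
Your argument is correct, and it takes a different route from the paper.

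\textbf{The paper's route.} The paper differentiates the Dunford integral directly. It writes $\partial_\mu^n\Pi_{a,\mu}$ as a Fa\`a di Bruno-type sum of products $\mathcal{R}\,\mathcal{Q}^{(j_1)}\mathcal{R}\cdots\mathcal{Q}^{(j_k)}\mathcal{R}$. At $(a,\mu)=(0,0)$ it observes that every resolvent factor acts on $\mathscr{F}_{0,0}$ as multiplication by $1/\lambda$. This tacitly uses that the Fourier multipliers $\partial_\mu^j\mathcal{Q}_{0,0}$ leave $\mathscr{F}_{0,0}$ invariant. Each term is then a multiple of $\oint\lambda^{-(k+1)}\,d\lambda=0$ for $k\geq1$.

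\textbf{Your route.} You use the same Fourier-diagonal structure at $a=0$, but globally rather than term by term. You compute $\Pi_{0,\mu}$ exactly as the multiplier with symbol $\chi_n(\mu)$ and show it is the fixed projector onto ${\rm span}\{e^{-iz},1,e^{iz}\}$ for all small $\mu$.

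\textbf{What each route buys.} Yours avoids the combinatorics and the implicit invariance step. It also proves more: $\partial_\mu^k\Pi_{0,0}=0$ for every $k\geq1$ as an operator on all of $L^2(\T)$, not only on $\mathscr{F}_{0,0}$. Moreover $\Pi_{0,\mu}-\Pi_{0,0}\equiv0$, so $\mathcal{U}_{0,\mu}\equiv\mathcal{I}$. That makes Lemma~\ref{l:ud3} immediate, and that lemma is where the present statement is actually invoked for every order $r_j\geq1$. The price is that you must decide, for every mode $n$, whether $i\Omega_{n,\mu}$ lies inside or outside the contour, uniformly for small $\mu$. The paper's route only needs the action on the three-dimensional kernel.

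\textbf{One step to tighten.} Lemma~\ref{l:31}(a) only says that the annulus between radii $R/3$ and $R/2$ contains no spectrum. It does not by itself say which eigenvalues lie inside $B(0;R/3)$. So citing it alone does not place the modes $|n|\geq2$ outside. There are two easy fixes.
\begin{enumerate}
\item For each fixed $n$, $\mu\mapsto\Omega_{n,\mu}$ is continuous, so $i\Omega_{n,\mu}$ cannot cross the spectrum-free annulus. It therefore stays on the side where it lies at $\mu=0$.
\item More simply, use the rank count of Lemma~\ref{l:31}(d). $\Pi_{0,\mu}$ is a diagonal projector of rank $3$ and $\chi_{-1}=\chi_0=\chi_1=1$, so every other $\chi_n$ must vanish.
\end{enumerate}
Either way, the conclusion $\Pi_{0,\mu}\equiv\Pi_{0,0}$ follows.
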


\begin{proof}
In order to compute the action of $(\mathcal{Q}_{0,0}-\lambda)^{-1}\partial_\mu^{N-1}\mathcal{Q}_{0,0}$ on $\mathscr{F}_{0,\mu}$, it is essential to first determine the action of  $\partial_\mu^{N-1}\mathcal{Q}_{0,0}$ on $\mathscr{F}_{0,\mu}$. Since $\phi$ is $\mu$-independent, 

\beq
(\partial_\mu^{N-1}\Pi_{0,0})\phi=\partial_\mu^{N-1}(\Pi_{0,0}\phi).
\eeq

To differentiate $\mathcal{Q}_{a,\mu}$ with respect to $\mu$, we use the Leibniz rule for operator-valued functions:

\beq
\partial_\mu^{N-1}\mathcal{Q}_{a,\mu}=\partial_\mu^{N-1}(\partial_z+i\mu)\mathcal{T}_{a,\mu}=\sum_{k=0}^{N-1}\binom{N-1}{k}(\partial_\mu^{N-1-k}(\partial_z+i\mu))(\partial_\mu^k\mathcal{T}_{a,\mu}).
\eeq

\no All terms but those with $N-1-k=0$ and $N-1-k=1$ vanish, so that

\beq
\partial_\mu^{N-1}\mathcal{Q}_{a,\mu}=(\partial_z+i\mu)\partial_\mu^{N-1}\mathcal{T}_{a,\mu}+i(N-1)\partial_\mu^{N-2}\mathcal{T}_{a,\mu}.
\eeq

\no First, we equate $a=0$ (since $\mathcal{T}_{a,\mu}$ depends analytically on $a$ and $\mu$, this does not impact the differentiation with respect to $\mu$) and apply the above to $e^{inz}$. The result is evaluated at $\mu=0$, giving

\begin{align}
\partial_\mu^{N-1}\mathcal{Q}_{0,0}e^{inz}=\begin{cases}(-in\rho j'(\rho n)+i(j(\rho)-j(\rho n))e^{inz},\quad &\text{if}\quad N=2,\\
-(in\rho^{N-1}j^{(N-1)}(\rho n)+i(N-1)\rho^{N-2}j^{(N-2)}(\rho n))e^{inz},\quad &\text{if}\quad N>2.
\end{cases}
\end{align}

Using the explicit eigenbasis
\[
\psi_1^\pm(0,\mu)=\frac{e^{iz}\pm e^{-iz}}{2}, 
\qquad 
\psi_0(0,\mu)=\frac{1}{\sqrt{2}},
\]

\no we obtain the following formulas for the action of
$\partial_\mu^{N-1}\mathcal{Q}_{0,0}$:

\begin{align}
\partial^{N-1}_\mu\mathcal{Q}_{0,0}\psi_1^+&=
\begin{cases}
-i\rho j'(\rho)\psi_1^+,\quad &\text{if}\quad N=2,\\
-i((N-1)\rho^{N-2}\jmath^{(N-2)}(\rho)+\rho^{N-1}\jmath^{(N-1)}(\rho))\psi_1^+,\quad &\text{if}\quad N>2, \mbox{even},\\((N-1)\rho^{N-2}\jmath^{(N-2)}(\rho)+\rho^{N-1}\jmath^{(N-1)}(\rho))\psi_1^-,\quad &\text{if}\quad N>2, \mbox{odd},
\end{cases}\label{e:1muq}\\
\partial^{N-1}_\mu\mathcal{Q}_{0,0}\psi_1^-&=
\begin{cases}
-i\rho j'(\rho)\psi_1^-,\quad &\text{if}\quad N=2,\\
-i((N-1)\rho^{N-2}\jmath^{(N-2)}(\rho)+\rho^{N-1}\jmath^{(N-1)}(\rho))\psi_1^-,\quad &\text{if}\quad N>2, \mbox{even},\\
-((N-1)\rho^{N-2}\jmath^{(N-2)}(\rho)+\rho^{N-1}\jmath^{(N-1)}(\rho))\psi_1^+,\quad &\text{if}\quad N>2, \mbox{odd},
\end{cases}\label{e:2muq}\\
\partial^{N-1}_\mu\mathcal{Q}_{0,0}\psi_0&=
\begin{cases}
i\dfrac{\jmath(\rho)-1}{\sqrt{2}},\quad &\text{if}\quad N=2,\\-i\dfrac{(N-1)\rho^{N-2}j^{(N-2)}(0)}{\sqrt{2}},\quad &\text{if}\quad N>2.\label{e:3muq}
\end{cases}
\end{align}

Similar to \eqref{pia}, the $n$th-derivative of $\Pi_{a,\mu}$ with respect to $\mu$ can be written as

\beq\label{pimu}
\partial_\mu^n \Pi_{a,\mu} = \frac{1}{2\pi i} \oint_{\partial B(0; R/3)} \sum_{k=1}^n
\sum_{\substack{j_1+\dots+j_k=n\\ j_i\ge1}} \frac{n!}{j_1! \cdots j_k!} \mathcal{R}_{a,\mu} \mathcal{Q}_{a,\mu}^{(j_1)} \mathcal{R}_{a,\mu} \cdots \mathcal{Q}_{a,\mu}^{(j_k)} \mathcal{R}_{a,\mu} \, d\lambda,
\eeq 

\no where the superindices denote derivatives with respect to $\mu$. For $(a,\mu)=(0,0)$, the operator $\mathcal Q_{0,0}$ vanishes on $\mathscr F_{0,0}$ and for every $\lambda$ on the integration contour we have the simple resolvent action

\beq
\mathcal R_{a,\mu}(\lambda)\phi=(\lambda-\mathcal Q_{0,0})^{-1}\phi=\frac{1}{\lambda}\phi.
\eeq

\no Using this identity repeatedly, 




\beq
\mathcal R_{a,\mu}\,\mathcal Q^{(j_1)}_{0,0}\mathcal R_{a,\mu}\cdots\mathcal Q^{(j_k)}_{0,0}\mathcal R_{a,\mu}\,\phi
= \frac{1}{\lambda^{\,k+1}}\;\bigl(\mathcal Q^{(j_1)}_{0,0}\cdots\mathcal Q^{(j_k)}_{0,0}\phi\bigr).
\eeq

\no Therefore, for $(a,\mu)=(0,0)$, 

\beq
\bigl(\partial_\mu^n\Pi_{0,0}\bigr)\phi
=\frac{1}{2\pi i}\sum_{k=1}^n
\sum_{\substack{j_1+\dots+j_k=n\\ j_i\ge1}}
\frac{n!}{j_1!\cdots j_k!}\;
\left(\oint_{\partial B(0;R/3)} \frac{d\lambda}{\lambda^{\,k+1}}\right)
\; \bigl(\mathcal Q^{(j_1)}_{0,0}\cdots\mathcal Q^{(j_k)}_{0,0}\phi\bigr).
\eeq

\no Since $k\geq 1$ in the Faà di Bruno sum, the lemma follows.





\end{proof}

\begin{lemma}\label{l:ud3}
    For $\phi\in\{\psi_1^+(0,\mu)=\cos z,\psi_1^-(0,\mu)=\sin z,\psi_0(0,\mu)=1/\sqrt{2}\}$, 
    
    \beq
    \partial_\mu^{N-1}\mathcal U_{0,0}\phi = 0, \quad \text{for all}\quad N\geq2.
    \eeq
\end{lemma}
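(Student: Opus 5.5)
The plan is to mirror the proof of Lemma~\ref{l:ud2}, with $\mu$-derivatives in place of $a$-derivatives, and to use Lemma~\ref{l:l1} as the key input. Write $\mathcal{U}_{a,\mu}=\mathcal{U}^{\mathrm I}_{a,\mu}\mathcal{U}^{\mathrm{II}}_{a,\mu}$ with $\delta=\Pi_{a,\mu}-\Pi_{0,0}$. From \eqref{eq:secondone}, $\mathcal{U}^{\mathrm{II}}_{a,\mu}=\mathcal{I}+\delta(2\Pi_{0,0}-\mathcal{I})$, and from \eqref{eq:binomial}, $\mathcal{U}^{\mathrm I}_{a,\mu}=\mathcal{I}+\sum_{k\ge1}c_k\delta^{2k}$. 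Each $\phi$ in the statement lies in $\mathscr{F}_{0,0}=\mathrm{Rg}(\Pi_{0,0})$, so $\Pi_{0,0}\phi=\phi$ and therefore $(2\Pi_{0,0}-\mathcal{I})\phi=\phi$. This gives
\[
\mathcal{U}_{a,\mu}\phi=\Bigl(\mathcal{I}+\sum_{k\ge1}c_k\delta^{2k}\Bigr)(\phi+\delta\phi).
\]
Setting $a=0$ and differentiating $N-1$ times in $\mu$ at $\mu=0$, the linear term contributes $(\partial_\mu^{N-1}\delta)|_{0}\,\phi=(\partial_\mu^{N-1}\Pi_{0,0})\phi$. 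By Lemma~\ref{l:l1} this vanishes.

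It remains to handle the terms $\delta^{2k}\phi$ and $\delta^{2k+1}\phi$ with $k\ge1$. Their $\mu$-derivatives at $(0,0)$ are, by the Leibniz rule, sums of products $(\partial_\mu^{m_1}\delta)\cdots(\partial_\mu^{m_r}\delta)\phi$ with $r\ge2$ and $m_1+\dots+m_r=N-1$. Since $\delta|_{(0,0)}=0$, every factor with $m_i=0$ kills the term, so only products with all $m_i\ge1$ survive. The rightmost factor then acts on $\phi$, and it suffices to show $(\partial_\mu^{m}\Pi_{0,0})\phi=0$ for every $m\ge1$, not just $m=N-1$. This follows from the proof of Lemma~\ref{l:l1}, which never uses $n=N-1$. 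Formula \eqref{pimu} evaluated on $\phi$ gives contour integrals $\oint_{\partial B(0;R/3)}\lambda^{-(k+1)}\,d\lambda$ with $k\ge1$, and these vanish for any $n\ge1$. Hence every term with $r\ge2$ is zero, and so is the whole $(N-1)$-th derivative.

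The main obstacle is making the step ``$\mathcal{R}\,\mathcal{Q}^{(j_1)}\mathcal{R}\cdots\phi=\lambda^{-(k+1)}(\cdots)\phi$'' in Lemma~\ref{l:l1} legitimate. That step needs the intermediate vectors $\mathcal{Q}^{(j)}_{0,0}\phi$ to remain in $\mathscr{F}_{0,0}$, so that the resolvent acts there as $1/\lambda$. I will check this directly from \eqref{e:1muq}--\eqref{e:3muq} and their analogues for general $j$. These show that $\partial_\mu^j\mathcal{Q}_{0,0}$ is a Fourier multiplier mapping each of $e^{\pm iz}$ and $1$ to a multiple of itself, so $\mathrm{span}\{\cos z,\sin z,1\}$ is invariant. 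With that invariance, $\partial_\mu^m\Pi_{0,0}\phi=0$ holds for all $m\ge1$, and the argument above closes. The sign and shape of the constants in \eqref{e:1muq}--\eqref{e:3muq} are irrelevant here.
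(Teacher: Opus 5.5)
Your proposal is correct and follows essentially the paper's proof: you expand $\mathcal{U}_{0,\mu}\phi$ in powers of $\delta_{0,\mu}$ via \eqref{eq:binomial}--\eqref{eq:secondone}, apply the Leibniz rule, and kill each product through its rightmost factor using $\delta_{0,0}=0$ together with Lemma~\ref{l:l1} extended to all orders $m\ge 1$. Your explicit check that each $\partial_\mu^j\mathcal{Q}_{0,0}$ is a Fourier multiplier leaving $\operatorname{span}\{\cos z,\sin z,1\}$ invariant justifies the repeated $1/\lambda$ resolvent step, which the paper uses only implicitly.
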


\begin{proof}

Recall 

\beq
\delta_{a,\mu}=\Pi_{a,\mu}-\Pi_{0,0}, \qquad \mathcal{U}_{a,\mu}^{\mathrm{I}}=\mathcal I + \sum_{k=1}^\infty c_k \,\delta_{a,\mu}^{2k}, \qquad \mathcal U^{\mathrm{II}}_{a,\mu} = \mathcal I + \delta_{a,\mu}(2\Pi_{0,0}-\mathcal I), 
\eeq

\no see \eqref{eq:binomial} and \eqref{eq:secondone}. 
We show by direct differentiation that

\beq
\partial_\mu^m(\delta_{a,\mu})^p \phi\Big|_{a=0,\mu=0}=0,
\qquad p\geq 1, m\geq 0.
\eeq

\no From this, the lemma follows immediately because all terms in the series for $\mathcal U^{\mathrm I}_{0,\mu}\phi$ (and the single correction term in $\mathcal U^{\mathrm{II}}_{0,\mu}\phi$) are of the form $(\delta_{a,\mu})^{p}\phi$ or a finite linear combination of such terms, hence all derivatives at $0$ vanish.

With $a=0$, we fix $p\geq1$ and compute the $m$-th derivative of the product $(\delta_{0,\mu})^p\phi$, recalling that $\phi$ does not depend on $\mu$. 
Applying the Leibniz rule (or the operator form of Faà di Bruno), the $m$-th derivative of the product is a finite sum of terms of the form

\beq
(\partial_\mu^{r_1}\delta_{0,\mu})\,
(\partial_\mu^{r_2}\delta_{0,\mu})\cdots
(\partial_\mu^{r_p}\delta_{0,\mu})\,\phi,
\eeq

\no where $r_1+r_2+\cdots+r_p = m$.  
Evaluating each factor at $\mu=0$ gives

\beq
\partial_\mu^{r_j}\delta_{0,0} = \partial_\mu^{r_j}\bigl(\Pi_{0,\mu}-\Pi_{0,0}\bigr)\big|_{\mu=0}
= \partial_\mu^{r_j}\Pi_{0,0}.
\eeq

\no Using Lemma \ref{l:l1}, $\partial_\mu^{r_j}\Pi_{0,0}\,\phi=0$ for \(r_j\geq1\). Consequently, in the product above
for each index $j$, the factor $\partial_\mu^{r_j}\delta_{0,0}$ annihilates every function in the range of $\Pi_{0,0}$. In particular, it annihilates the function obtained by the action of the factors to the right. It follows that the entire chain applied to $\phi$ is zero. Because this holds for every choice of the partition $r_1+\cdots+r_p=m$, every summand in the Leibniz expansion vanishes at $\mu=0$. Therefore,

\beq
\partial_\mu^m((\delta_{0,\mu})^p\phi)\Big|_{\mu=0}=0,
\eeq

\no as claimed.

\end{proof}

Recall \eqref{e:teig}:

 \beq\label{eq:teig2}
\psi_j^\sigma(a,\mu)=\mathcal U_{a,\mu}\psi_j^\sigma(0,\mu).
 \eeq
 
 Since $\mathcal U_{a,\mu}$ is analytic in $(a,\mu)$ near $(0,0)$, we may expand $\mathcal U_{a,\mu}$ in a multivariate Taylor series about $(0,0)$. Using multi-index notation $\beta=(\beta_1,\beta_2)$ with $\beta_1$ the exponent for powers of $a$ and $\beta_2$ for powers of $\mu$, with $|\beta|=\beta_1+\beta_2$, we have the formal expansion

 \beq
\mathcal U_{a,\mu}
= \mathcal I + \sum_{1\le |\beta|\le M-1}\frac{a^{\beta_1}\mu^{\beta_2}}{\beta!}\,\partial^\beta\mathcal U_{0,0}
\;+\; \mathcal R_M(a,\mu),
 \eeq

\no where $\partial^\beta=\partial_a^{\beta_1}\partial_\mu^{\beta_2}$, $\beta!=\beta_1!\,\beta_2!$, and $\mathcal R_M$ is the remainder of order $M$. Substitution in \eqref{eq:teig2} gives

\beq\label{e:mv}
\psi_j^\sigma(a,\mu)=\psi_j^\sigma(0,\mu)+ \sum_{1\le |\beta|\le M-1}\frac{a^{\beta_1}\mu^{\beta_2}}{\beta!}\,(\partial^\beta\mathcal U_{0,0})\psi_j^\sigma(0,\mu)\;+\; \mathcal R_M(a,\mu)\psi_j^\sigma(0,\mu).
\eeq

From Lemma \ref{l:pd1}, \ref{l:ud2}, \ref{l:l1}, \ref{l:ud3}, every $a$ or $\mu$ derivative of order $<N-1$ vanishes, and more generally every mixed derivative $\partial^\beta\mathcal U_{0,0}$ with $|\beta|<N-1$ also vanishes (because it contains either $\partial_a^{r}$ or $\partial_\mu^{s}$ with $r<N-1$ or $s<N-1$ and those partials are zero). Hence all terms in the sum \eqref{e:mv} with $|\beta|<N-1$ vanish. With $M=N$, we obtain the expansion truncated at the first possibly nonzero order:  

\begin{align}\nonumber
\psi_j^\sigma(a,\mu)=&\psi_j^\sigma(0,\mu)+\frac{a^{\,N-1}}{(N-1)!}\bigl(\partial_a^{\,N-1}\mathcal U_{0,0}\bigr)\psi_j^\sigma(0,\mu)
+ \frac{\mu^{\,N-1}}{(N-1)!}\bigl(\partial_\mu^{\,N-1}\mathcal U_{0,0}\bigr)\psi_j^\sigma(0,\mu)\\\nonumber&+\sum_{|\beta|\ge N}\frac{a^{\beta_1}\mu^{\beta_2}}{\beta!}\bigl(\partial^\beta\mathcal U_{0,0}\bigr)\psi_j^\sigma(0,\mu)
+ \mathcal R_N(a,\mu)\psi_j^\sigma(0,\mu)\\\label{e:psiop1}
=&\psi_j^\sigma(0,\mu)+\frac{a^{\,N-1}}{(N-1)!}\bigl(\partial_a^{\,N-1}\mathcal U_{0,0}\bigr)\psi_j^\sigma(0,\mu)
+\frac{a^{\,N-1}\mu}{(N-1)!}\bigl(\partial_a^{\,N-1}\partial_\mu\mathcal U_{0,0}\bigr)\psi_j^\sigma(0,\mu) \nonumber\\&
+ \mathcal \mathcal{O}((|a|+|\mu|)^N)\psi_j^\sigma(0,\mu), 
\end{align}

\no where Lemma \ref{l:ud3} was used for the last equality. Since $\psi_j^\sigma(0,\mu)$ lies in the range of $\Pi_{0,0}$, we have $\psi_j^\sigma(0,\mu)=\Pi_{0,0}\psi_j^\sigma(0,\mu)$. Using Lemma \ref{l:ud2}, expansion \eqref{e:psiop1} becomes

\begin{align}\label{e:psiop2}
\psi_j^\sigma(a,\mu)=&\psi_j^\sigma(0,\mu)+\frac{a^{\,N-1}}{(N-1)!}\bigl(\partial_a^{\,N-1}\Pi_{0,0}\bigr)\psi_j^\sigma(0,\mu)
+ \mathcal{O}(|a|^{N-1}(1+|\mu|))\psi_i^\sigma(0,\mu).
\end{align}
 


In order to compute $(\partial_a^{N-1} \Pi_{0,0})\psi_n^\sigma(0,\mu)$ , it is useful to know the action of $(\mathcal{Q}_{0,0}-\lambda)^{-1}$ on 

\beq
\psi_n^+=\cos{(n z)},\quad \psi_n^-=\sin{(n z)},\quad n\in \N.
\eeq

\begin{lemma}\label{l:d5}
    The space $H^s(\T)$ decomposes as $H^s(\T)=\mathscr{F}_{0,0}\oplus\mathcal{M}_{H^s}$ with $\mathcal{M}_{H^s}=\overline{\oplus_{n=2}^\infty\mathcal{M}_n}$ where the subspaces $\mathscr{F}_{0,0}$ and $\mathcal{M}_n$, defined below, are invariant under $\mathcal{Q}_{0,0}$ and the following properties hold:
    
    \begin{enumerate}[(a)]
        \item $\mathscr{F}_{0,0}=\operatorname{span}\{\psi_1^+,\psi_1^-,\psi_0\}$ is the kernel of $\mathcal{Q}_{0,0}$. For any $\lambda\neq0$, the operator $\mathcal{Q}_{0,0}-\lambda:\mathscr{F}_{0,0}\to \mathscr{F}_{0,0}$ is invertible and
        
        \beq
        (\lambda-\mathcal{Q}_{0,0})^{-1}\phi=\dfrac{1}{\lambda}\phi,\qquad \phi\in \{\psi_1^+=\cos z, \psi_1^-=\sin z, \psi_0=1/\sqrt{2})\}.
        \eeq
        
        \item Each subspace $\mathcal{M}_n=\operatorname{span}\{\psi_n^+,\psi_n^-\}$ is invariant under $\mathcal{Q}_{0,0}$. Let $\mathcal{M}_{L^2}=\overline{\oplus_{n=2}^\infty\mathcal{M}_n}$. For any $|\lambda|$ sufficiently small, the operator $\mathcal{Q}_{0,0}-\lambda:\mathcal{M}_{H^s}\to\mathcal{M}_{L^2}$ is invertible. In particular, with $\sigma\in \{+,-\}$, 
        \beq\label{e:nq}
        (\lambda-\mathcal{Q}_{0,0})^{-1}\psi_n^\sigma=\dfrac{\lambda \psi_n^{\sigma}-\sigma n(\jmath(\rho)-\jmath(\rho n))\psi_n^{-\sigma}}{\lambda^2+(n(\jmath(\rho)-\jmath(\rho n)))^2}+\lambda^2\varphi_n^\sigma(\lambda,z),
        \eeq

        \no where $\lambda \mapsto \varphi_n^\sigma(\lambda,.)$ are analytic functions.
    \end{enumerate}
\end{lemma}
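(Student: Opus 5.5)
The argument will be pure Fourier analysis of the constant-coefficient operator $\mathcal{Q}_{0,0}=\partial_z\circ(c_0-\mathcal{J}_\rho)$, with $c_0=\jmath(\rho)$. By \eqref{E:spec} with $\mu=0$, every exponential is an eigenfunction:
\[
\mathcal{Q}_{0,0}e^{inz}=in(\jmath(\rho)-\jmath(\rho n))e^{inz}.
\]
I will read off the real invariant two-dimensional blocks from this identity and invert $\lambda-\mathcal{Q}_{0,0}$ on each block explicitly.

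For part (a), take $n=0$ and $n=\pm1$. The eigenvalue $in(\jmath(\rho)-\jmath(\rho n))$ vanishes in both cases: for $n=0$ because of the factor $n$, and for $n=\pm1$ because $\jmath$ is even by (H1). Hence $\cos z$, $\sin z$ and $1/\sqrt2$ lie in $\ker\mathcal{Q}_{0,0}$. Conversely, for $|n|\ge2$ the eigenvalue is nonzero, since $\jmath(\rho|n|)\neq\jmath(\rho)$ by (H3)/\eqref{e:k_cond}. So the kernel is exactly $\mathscr{F}_{0,0}$. On this space $\lambda-\mathcal{Q}_{0,0}=\lambda\mathcal{I}$, which gives $(\lambda-\mathcal{Q}_{0,0})^{-1}\phi=\phi/\lambda$ for $\lambda\neq0$.

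For part (b), fix $n\ge2$ and set $\omega_n=n(\jmath(\rho)-\jmath(\rho n))$. Since $\jmath$ is even, the eigenvalue for $e^{-inz}$ is $-i\omega_n$. Writing $\cos nz$ and $\sin nz$ in exponentials gives
\[
\mathcal{Q}_{0,0}\psi_n^+=-\omega_n\psi_n^-,\qquad \mathcal{Q}_{0,0}\psi_n^-=\omega_n\psi_n^+,
\]
so $\mathcal{M}_n$ is invariant, and on it $\mathcal{Q}_{0,0}$ is represented by a $2\times2$ rotation-type matrix with eigenvalues $\pm i\omega_n$. The inverse of $\lambda\mathcal{I}-\mathcal{Q}_{0,0}$ on $\mathcal{M}_n$ is then explicit: $\det=\lambda^2+\omega_n^2$, and the adjugate yields
\[
(\lambda-\mathcal{Q}_{0,0})^{-1}\psi_n^\sigma=\frac{\lambda\psi_n^\sigma-\sigma\omega_n\psi_n^{-\sigma}}{\lambda^2+\omega_n^2}.
\]
I will check the sign convention by applying $\lambda-\mathcal{Q}_{0,0}$ to the right-hand side. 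This is the main formula \eqref{e:nq}, with the remainder $\lambda^2\varphi_n^\sigma$ absorbed. I read that remainder as allowing an analytic correction, and here one may take $\varphi_n^\sigma=0$ or simply note that the formula is exact.

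The remaining issue, and the main obstacle, is uniform invertibility on all of $\mathcal{M}_{H^s}\to\mathcal{M}_{L^2}$ for small $|\lambda|$. Two bounds are needed.
\begin{itemize}
\item \textbf{Lower bound on $|\omega_n|$.} I need $\inf_{n\ge2}|\omega_n|>0$. By (H2), with $\sigma\ge-1$, the quantity $|n(\jmath(\rho)-\jmath(\rho n))|$ grows at least like $n$ when $\sigma<0$ (since then $\jmath(\rho n)\to0$), and like $n^{1+\sigma}$ otherwise. The borderline case, where $\jmath(\rho n)$ tends to something comparable with $\jmath(\rho)$, is handled by the factor $n$ together with (H3), which excludes zeros at finite $n$. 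So only finitely many $n$ can have small $|\omega_n|$, and none of them vanishes. Thus $|\omega_n|\ge R_0>0$, which is comparable to the quantity $R$ of Lemma~\ref{l:31}.
\item \textbf{Uniform coefficient bound.} For $|\lambda|<R_0/2$, the coefficients $|\lambda|/|\lambda^2+\omega_n^2|$ and $|\omega_n|/|\lambda^2+\omega_n^2|$ are bounded by $C/|\omega_n|$.
\end{itemize}
Since $|\omega_n|\gtrsim n^{\max(1,1+\sigma)}=n^s$, the inverse maps $L^2$ into $H^s$ boundedly. Summing over the orthogonal blocks $\mathcal{M}_n$ gives a bounded inverse on the closure. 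Analyticity in $\lambda$ follows because each block inverse is a rational function of $\lambda$ with poles at $\pm i\omega_n$, all outside the small disk, and the bounds are uniform in $n$.
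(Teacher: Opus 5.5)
Your explicit computations are correct, and they follow the same route as the paper. The paper's proof simply defers to the block-by-block Fourier argument of Berti--Maspero--Ventura, where the dispersion is explicit.

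Specifically:
\begin{itemize}
\item The kernel is exactly $\mathscr{F}_{0,0}$, by evenness of $\jmath$ and \eqref{e:k_cond}.
\item $\mathcal{Q}_{0,0}\psi_n^+=-\omega_n\psi_n^-$ and $\mathcal{Q}_{0,0}\psi_n^-=\omega_n\psi_n^+$.
\item Your $2\times 2$ inverse has the correct sign convention.
\item \eqref{e:nq} holds exactly, so $\varphi_n^\sigma\equiv 0$ is admissible.
\end{itemize}

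The genuine gap is the uniform invertibility of $\mathcal{Q}_{0,0}-\lambda:\mathcal{M}_{H^s}\to\mathcal{M}_{L^2}$, specifically in your borderline case $\sigma=0$.

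\emph{The finitely-many claim is unjustified.} You assert that ``the factor $n$ together with (H3)'' leaves only finitely many $n$ with small $|\omega_n|$. This does not follow. (H3) at $k=\rho$ only gives $\omega_n\neq 0$ for each $n$, with no uniformity. The factor $n$ helps only if $|\jmath(\rho)-\jmath(\rho n)|$ decays slower than $1/n$. For example, take $\jmath(k)=2+(k^2-1)e^{-k^2}$ with $\rho=1$. It satisfies (H1), (H2) with $\sigma=0$, and \eqref{e:k_cond}. Yet $\omega_n=-n(n^2-1)e^{-n^2}\to 0$, so the eigenvalues $\pm i\omega_n$ accumulate at $0$. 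Hence $\lambda-\mathcal{Q}_{0,0}$ fails to be invertible on $\mathcal{M}_{L^2}$ for arbitrarily small $\lambda$.

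\emph{A positive infimum is still not enough.} Even when $\inf_{n\ge 2}|\omega_n|>0$, your later step $|\omega_n|\gtrsim n^s$ is what makes the inverse land in $H^s$. For $\sigma\le 0$ we have $s=1$, and then this step is equivalent to
\[
\liminf_{n\to\infty}|\jmath(\rho)-\jmath(\rho n)|>0,
\]
which is strictly stronger than $\inf_n|\omega_n|>0$.

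To close the argument you must establish this condition in each case:
\begin{itemize}
\item For $\sigma>0$ it follows from (H2).
\item For $\sigma<0$ it reduces to $\jmath(\rho)\neq 0$. This holds because $\jmath(\rho)=c_0>0$ is the wave speed, but you should say so; otherwise one only gets $|\omega_n|\sim n^{1+\sigma}$.
\item For $\sigma=0$ it is an extra uniform non-resonance condition. You must either assume it or derive it by a separate argument from the global-in-$k$ form of (H3). The example above violates that global form, but nothing in your proposal uses it.
\end{itemize}
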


\begin{proof}
    The proof follows the approach outlined in \cite[Lemma~A.2]{Berti2021FullWater}.
\end{proof}

\begin{lemma}

\begin{align}
    \partial_a^{N-1}\Pi_{0,0}\psi_1^+(0,\mu)&=\begin{cases} N! \mathcal{D}_\rho^{-1}\left(\cos^N{z}-\dfrac{1}{2}\langle\cos^N{z},1\rangle\right),& N ~\text{even},\\
\alpha N!\mathcal{D}_\rho^{-1}\left(\cos^N{z}-\langle\cos^N{z},\cos{z}\rangle\right),& N~\text{odd}.
    \end{cases}\\
\partial_a^{N-1}\Pi_{0,0}\psi_1^-(0,\mu)&=\begin{cases} N!\mathcal{D}_\rho^{-1}\left(\cos^{N-1}{z}\sin{z}\right), & N~\text{even},\\
\alpha N!\mathcal{D}_\rho^{-1}\left(\cos^{N-1}{z}\sin{z}-\langle\cos^{N-1}{z}\sin{z},\sin{z}\rangle\right), & N~\text{odd}.
\end{cases}
\\
\partial_a^{N-1}\Pi_{0,0}\psi_0(0,\mu)&=\begin{cases} N!\mathcal{D}_\rho^{-1}\left(\cos^{N-1}{z}-\langle\cos^{N-1}{z},\cos{z}\rangle\right),& N~\text{even},\\
\alpha N!\mathcal{D}_\rho^{-1}\left(\cos^{N-1}{z}-\dfrac{1}{2}\langle\cos^{N-1}{z},1\rangle\right),& N~\text{odd}.
    \end{cases}
\end{align}
\end{lemma}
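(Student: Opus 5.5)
We start from Lemma~\ref{l:pd1}, which gives
\[
\partial_a^{N-1}\Pi_{0,0}=\frac{1}{2\pi i}\oint_{\partial B(0;R/3)}(\lambda-\mathcal{Q}_{0,0})^{-1}\bigl(\partial_a^{N-1}\mathcal{Q}_{0,0}\bigr)(\lambda-\mathcal{Q}_{0,0})^{-1}\,d\lambda,
\]
and evaluate the integrand on each $\phi\in\{\cos z,\sin z,1/\sqrt2\}$.

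\textbf{Step 1: the inner resolvent.} By Lemma~\ref{l:d5}(a), $(\lambda-\mathcal{Q}_{0,0})^{-1}\phi=\phi/\lambda$.

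\textbf{Step 2: compute $\partial_a^{N-1}\mathcal{Q}_{0,0}$.} Since $\mathcal{Q}_{a,0}=\partial_z\circ(c-\mathcal{J}_\rho-N\alpha\eta^{N-1})$ and, by Theorem~\ref{t:1}, $c(a)-c_0=\mathcal{O}(a^{N-1})$ and $\eta=a\cos z+\mathcal{O}(a^N)$, we have $\eta^{N-1}=a^{N-1}\cos^{N-1}z+\mathcal{O}(a^{2N-2})$. Hence
\[
\partial_a^{N-1}\mathcal{Q}_{0,0}=(N-1)!\,\partial_z\circ\bigl(c_{N-1}-N\alpha\cos^{N-1}z\bigr),
\]
where $c_{N-1}=0$ for even $N$ (with $\alpha=1$), and for odd $N$ the constant term is $c_{N-1}=\alpha\langle\cos^Nz,\cos z\rangle$. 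The constant $c_{N-1}$ is killed by $\partial_z$ when acting on the constant $\psi_0$. On $\cos z$ and $\sin z$ it contributes $\partial_z$ of those functions, which lie in $\mathscr{F}_{0,0}$. Applying this operator to $\phi$ gives $-(N-1)!N\alpha\,\partial_z(\cos^{N-1}z\,\phi)$, plus the $c_{N-1}\partial_z\phi$ term. For $\phi=\cos z$ this is $-N!\alpha\,\partial_z(\cos^N z)$, and similarly for $\sin z$ and $1/\sqrt2$ (the $\sqrt2$ normalization is carried along).

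\textbf{Step 3: split and integrate.} Split $g:=\partial_a^{N-1}\mathcal{Q}_{0,0}\phi$ into its $\mathscr{F}_{0,0}$-component and its $\mathcal{M}$-component (modes $n\ge2$), according to Lemma~\ref{l:d5}.

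On the $\mathscr{F}_{0,0}$-component the outer resolvent again acts as $1/\lambda$, so the integrand is $\lambda^{-2}\times$(constant) and integrates to zero. This is the mechanism that removes the projections $\tfrac12\langle\cdot,1\rangle$, $\langle\cdot,\cos z\rangle$, $\langle\cdot,\sin z\rangle$ appearing in the statement. For the same reason the $c_{N-1}\partial_z\phi$ term drops out entirely.

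On each $\mathcal{M}_n$-component, formula \eqref{e:nq} applies. The analytic remainder $\lambda^2\varphi_n^\sigma/\lambda$ integrates to zero. The main part gives
\[
\frac{1}{2\pi i}\oint\frac{1}{\lambda}\cdot\frac{\lambda\psi_n^\sigma-\sigma\Omega_n\psi_n^{-\sigma}}{\lambda^2+\Omega_n^2}\,d\lambda,\qquad \Omega_n:=n(\jmath(\rho)-\jmath(\rho n)).
\]
The poles at $\pm i\Omega_n$ lie outside the contour, since $|\Omega_n|\ge R$, so only the pole at $\lambda=0$ contributes. The result is $-\sigma\psi_n^{-\sigma}/\Omega_n$, which amounts to inverting $\mathcal{Q}_{0,0}=\partial_z\circ(-\mathcal{D}_\rho)$ on $\mathcal{M}$ (up to sign). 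Applied to $g=-N!\alpha\,\partial_z F$, with $F$ the corresponding projected product, this yields $N!\alpha\,\mathcal{D}_\rho^{-1}F$: the $\partial_z$ cancels against $\partial_z^{-1}$, and the sign from $\mathcal{Q}_{0,0}=-\partial_z\mathcal{D}_\rho$ cancels the minus sign in $g$.

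\textbf{Step 4: projections by parity.} For even $N$, $\cos^N z$ has only even modes, so only the mean is removed. For $\cos^{N-1}z$ with $N$ even, only odd modes occur, so the $\cos z$ component is removed. The odd-$N$ cases are analogous. These cases give exactly the stated subtractions.

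\textbf{Main obstacle.} The delicate point is the bookkeeping of signs and constants: the convention $\mathcal{D}_\rho=\jmath(\rho)-\mathcal{J}_\rho$, the factor $(N-1)!\cdot N=N!$, the $1/\sqrt2$ normalization of $\psi_0$, and verifying that the residue at $0$ indeed reproduces $-\mathcal{Q}_{0,0}^{-1}$ restricted to $\mathcal{M}$. I would check this on a single mode before assembling the final result.
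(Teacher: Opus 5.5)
Your proposal is correct and follows the paper's route: the contour formula of Lemma~\ref{l:pd1}, the inner resolvent acting as $1/\lambda$ on $\mathscr{F}_{0,0}$, and the residue at $\lambda=0$ computed mode by mode with Lemma~\ref{l:d5}. The paper expands $\cos^N z$ by De Moivre and sums residues term by term, writing out only the even-$N$, $\psi_1^+$ case. You instead identify the residue operator as $-\mathcal{Q}_{0,0}^{-1}$ on $\mathcal{M}$ and commute $\partial_z$ past the mode projection $P_{\mathcal{M}}$ (which removes modes $0,\pm1$). This gives $N!\alpha\,\mathcal{D}_\rho^{-1}P_{\mathcal{M}}(\cos^{N-1}z\,\phi)$ for all three $\phi$ and both parities at once. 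It is a tidier packaging of the same computation.

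One sign claim in Step~3 is wrong and should be fixed. Since $c_0=\jmath(\rho)$, we have $\mathcal{Q}_{0,0}=\partial_z\circ(\jmath(\rho)-\mathcal{J}_\rho)=\partial_z\circ\mathcal{D}_\rho$; indeed $\mathcal{Q}_{0,0}e^{inz}=i\Omega_n e^{inz}$. It is not $-\partial_z\circ\mathcal{D}_\rho$. The minus sign that cancels the one in $g=-N!\alpha\,\partial_z F$ comes from the residue itself, $-\mathcal{Q}_{0,0}^{-1}=-\mathcal{D}_\rho^{-1}\partial_z^{-1}$ on $\mathcal{M}$. If you combined your stated $\mathcal{Q}_{0,0}=-\partial_z\mathcal{D}_\rho$ with that correct residue, you would get $-N!\alpha\,\mathcal{D}_\rho^{-1}F$. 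Your explicit per-mode value $-\sigma\psi_n^{-\sigma}/\Omega_n$ is correct and gives the stated sign. For example, for $N=2$ and $\phi=\cos z$ one finds $g=2\sin 2z$ and the result $\cos 2z/(\jmath(\rho)-\jmath(2\rho))$, matching the Whitham example.

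Finally, your computation gives an extra factor $1/\sqrt2$ in the $\psi_0$ case. It also subtracts $\langle\cdot,\cos z\rangle\cos z$ or $\langle\cdot,\sin z\rangle\sin z$ rather than bare constants. The printed statement omits both, but your version is the one consistent with Lemma~\ref{l:b1} and the modified KdV example. So your phrase ``exactly the stated subtractions'' should be read with those corrections.
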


\begin{proof}
To compute the action of $\partial_a^{N-1}\Pi_{0,0}$ on $\mathscr F_{0,0}$, we need to determine how $\partial^{N-1}_a \mathcal{Q}_{0,0}$ acts on $\mathscr F_{0,0}$. Using Theorem~\ref{t:1}, 

\begin{equation}\label{e:expand311}
 \partial^{N-1}_a \mathcal{Q}_{0,0}=
\begin{cases}
- N!\partial_z\cos^{N-1}{z},\quad &N~\text{even},\\
\alpha(N-1)!\partial_z\left(\langle\cos^N{z} ,\cos{z}\rangle-N \cos^{N-1}{z}\right),&N~\text{odd}.
\end{cases}
\end{equation}

Using De Moivre's Theorem, $\cos^N z$ can be written as

\begin{align}\label{excos}
\cos^N z&=\dfrac{1}{2^N}\sum_{k=0}^N\binom{N}{k}\cos{(N-2k)z}\\\nonumber
&=\begin{cases}
   \displaystyle \frac{1}{2^N}\binom{N}{N/2}+\frac{1}{2^{N-1}}\sum_{k=0}^{N/2-1} \binom{N}{k}\cos(N-2k)z, &N~\text{even},\\
   \displaystyle \frac{1}{2^{N-1}}\sum_{k=0}^{(N-1)/2} \binom{N}{k}\cos(N-2k)z, &N~\text{odd}.
\end{cases}
\end{align}

\no Note that $\langle\cos^N{z} ,\cos{z}\rangle$ is the Fourier coefficient of $\cos{z}$ in the expansion of $\cos^Nz$, occurring when $k=(N-1)/2$ in the above series. 

Using \eqref{excos} in \eqref{e:expand311}, 

\begin{align}
    \partial_a^{N-1}\mathcal{Q}_{0,0}\psi_1^+(0,\mu)=\begin{cases}
- N!\partial_z(\cos^Nz),\quad &\quad N~\text{even},\\
\alpha(N-1)!\left(-\dfrac{1}{2^{N-1}}\binom{N}{(N-1)/2}\sin{z}-N \partial_z(\cos^Nz)\right),\quad &\quad N~\text{odd},
\end{cases}
\end{align}

\begin{align}
    \partial_a^{N-1}\mathcal{Q}_{0,0}\psi_1^-(0,\mu)=\begin{cases}
- N!\partial_z(\cos^{N-1}z\sin{z}),&N~\text{even},\\
\alpha(N-1)!\left(\dfrac{1}{2^{N-1}}\binom{N}{(N-1)/2}\cos{z}-N \partial_z(\cos^{N-1}z\sin{z})\right),\!\!\!\!\!&N~\text{odd},
\end{cases}
\end{align}

\begin{align}
    \partial_a^{N-1}\mathcal{Q}_{0,0}\psi_0(0,\mu)\!=\!\begin{cases}
-\dfrac{1}{\sqrt{2}} N!\partial_z\left(\cos^{N-1}z\right), &N~\text{even},\\
\dfrac{1}{\sqrt{2}}\alpha(N-1)!\left(\dfrac{1}{2^{N-1}}\binom{N}{(N-1)/2}\cos{z}-N \partial_z(\cos^{N-1}z\sin{z})\right),\!\!\!&N~\text{odd}.
\end{cases}
\end{align}

\no where, using \eqref{excos}, 

\beq
\partial_z(\cos^Nz)=-\dfrac{1}{2^N}\sum_{k=0}^N\binom{N}{k}(N-2k)\sin{(N-2k)z}, 
\eeq

\no and 
 
\begin{align}\nonumber
\partial_z(\cos^{N-1}z\sin{z})=&\dfrac{1}{2^N}\sum_{k=0}^{N-1}\binom{N-1}{k}\left((N-2k)\cos{(N-2k)z}-\right.\\
&\left.(N-2k-2)\cos{(N-2k-2)z}\right).
\end{align}

From Lemma \ref{l:pd1}, 

\begin{align}
\partial_a^{N-1}\Pi_{0,0}\psi_1^+(0,\mu)&=\dfrac{1}{2\pi i}\oint_{\partial B(0; R/3)}(\lambda-\mathcal{Q}_{0,0})^{-1}(\partial_a^{N-1}\mathcal{Q}_{0,0})(\lambda-\mathcal{Q}_{0,0})^{-1}\psi_1^+d\lambda\nonumber\\&=\dfrac{1}{2\pi i}\oint_{\partial B(0; R/3)}\dfrac{1}{\lambda}(\lambda-\mathcal{Q}_{0,0})^{-1}(\partial_a^{N-1}\mathcal{Q}_{0,0})\psi_1^+d\lambda,
\end{align}

\no using Lemma~\ref{l:d5} in the last step. 

For even $N$,

\begin{align}
    \partial_a^{N-1}\Pi_{0,0}\psi_1^+(0,\mu)=\dfrac{ N!}{2^N}\dfrac{1}{2\pi i}\oint_{\partial B(0; R/3)}\dfrac{1}{\lambda}(\lambda-\mathcal{Q}_{0,0})^{-1}\sum_{k=0}^N\binom{N}{k}(N-2k)\sin{(N-2k)z}d\lambda.
\end{align}

Using the results above and, again, Lemma \ref{l:d5}, 

\begin{align}\label{e:oppi}
    \partial_a^{N-1}\Pi_{0,0}\psi_1^+(0,\mu)=\dfrac{ N!}{2^N}\dfrac{1}{2\pi i}\oint_{\partial B(0; R/3)}\dfrac{1}{\lambda}A(\lambda,z)d\lambda,
\end{align}

\no where

\begin{align}
A(\lambda,z)&=\sum_{k=0}^N\binom{N}{k}(N-2k)\dfrac{\lambda \psi_{N-2k}^-+(N-2k)(j(\rho)-j(\rho(N-2k))\psi_{N-2k}^+}{\lambda^2+((N-2k)(j(\rho)-j(\rho(N-2k))))^2}.
\end{align}

\no In \eqref{e:oppi}, the term with $N-2k=0$ vanishes. The residue calculation gives 


%

\begin{align}
    \partial_a^{N-1}\Pi_{0,0}\psi_1^+(0,\mu)=\dfrac{ N!}{2^N}\left(\sum_{k=0}^N\binom{N}{k}\dfrac{1}{j(\rho)-j(\rho(N-2k))}\cos{((N-2k)z)}-\binom{N}{N/2}\dfrac{1}{j(\rho)-1}\right),
\end{align}

\no where we have separated the $k=N/2$ term. Using \eqref{doperation}, this is re-written as

\begin{align}
    \partial_a^{N-1}\Pi_{0,0}\psi_1^+(0,\mu)= N!\mathcal{D}_\rho^{-1}\left(\cos^Nz-\dfrac{1}{2}\langle\cos^Nz,1\rangle\right).
\end{align}

For odd $N$, and for the action on $\psi_1^-(0,\mu)$ and $\psi_0(0,\mu)$, the calculations are similar. 














\end{proof}
This completes the proof of Lemma \ref{l:b1}.

\section{Proof of Proposition \ref{p:1}}\label{a:4}

For every value of $ \mu$, the basis $\{\psi_1^\pm(0,\mu), \psi_0(0,\mu)\} $ is both $ \mu $-symplectic and reversible. According to Lemma~\ref{l:b2}, the Kato basis $ \{\psi_1^\pm(a,\mu), \psi_0(a,\mu)\} $ retains the $ \mu $-symplectic property for all pairs $(a,\mu)$ in 
neighborhoods of $ a_0 $ and $ \mu_0 $. The maps $(a,\mu)\mapsto \psi_1^\pm(a,\mu)$ and $(a,\mu)\mapsto \psi_0(a,\mu)$ are smooth and continuously differentiable. The matrix $ T_{a,\mu} = \big(\mathcal{T}_{a,\mu}\psi^\sigma_n(a,\mu), \psi^{\sigma^\prime}_{n'}(a,\mu)\big) $ is sufficiently smooth, with regularity of class $ C^r $, where $ r \geq 3 $. From \eqref{eq:tdef}, we have

\beq\label{eq:tdef1}
\mathcal{T}_{a,\mu}=c-e^{-i\mu z}\mathcal{J}_\rho e^{i\mu z}-N\alpha \eta^{N-1}.
\eeq

Using \eqref{E:w_ansatz} together with \eqref{e:c}, this expression becomes

\beq
\mathcal{T}_{a,\mu}=\jmath(\rho)+a^\tau c_\tau+\mathcal{O}(a^{\tau+1})-e^{-i\mu z}\mathcal{J}_\rho e^{i\mu z}-N\alpha(a\cos{z}+a^N\eta_N+\mathcal{O}(a^{N+1}))^{N-1}.
\eeq

\no We decompose $\mathcal{T}_{a,\mu}$ into a $\mu$-dependent part and an $a$-dependent part as

\begin{equation}\label{e:dc}
\mathcal{T}_{a,\mu}
= \tilde{\mathcal{T}}_{\mu} + \check{\mathcal{T}}_{a},
\end{equation}

\no where

\begin{align}
\tilde{\mathcal{T}}_{\mu}
&= \jmath(\rho) - e^{-i\mu z}\mathcal{J}_\rho e^{i\mu z}, \\\label{eqta2}
\check{\mathcal{T}}_a
&= c(a)
 - N\alpha \eta^{N-1}.
\end{align}

\no To find the matrix representation, we use 

 \beq
\tilde{\mathcal{T}}_{\mu}(\cos{nz})=\tilde{\mathcal{T}}_{\mu}\left(\dfrac{e^{inz}+e^{-inz}}{2}\right)\quad\text{and}\quad\tilde{\mathcal{T}}_{\mu}(\sin{nz})=\tilde{\mathcal{T}}_{\mu}\left(\dfrac{e^{inz}-e^{-inz}}{2i}\right).
 \eeq
 
\no Consequently,

\begin{align}\nonumber
    \tilde{\mathcal{T}}_{\mu}(\cos{nz})=&j(\rho)\cos{nz}-\dfrac{1}{2}\left(j(\rho(n+\mu))e^{inz}+j(\rho(n-\mu))e^{-inz}\right)\\\nonumber
    =&(\jmath(\rho)-\jmath(\rho n))\cos{n z}-\\
    &~~\sum_{m=1}^\infty \left(\frac{\rho^{2m}\mu^{2m}}{(2m)!}\jmath^{(2m)}(\rho n)\cos{nz}+ i \frac{\rho^{2m-1}\mu^{2m-1}}{(2m-1)!}\jmath^{(2m-1)}(\rho n)\sin{nz}\right), 
\end{align}

\no and
\begin{align}\nonumber
    \tilde{\mathcal{T}}_{\mu}(\sin{nz})=&j(\rho)\sin{nz}-\dfrac{1}{2i}\left(j(\rho(n+\mu))e^{inz}-j(\rho(n-\mu))e^{-inz}\right)\\\nonumber
    =&(\jmath(\rho)-\jmath(\rho n))\sin{n z}+ \\
    &~~\sum_{m=1}^\infty \left(-\frac{\rho^{2m}\mu^{2m}}{(2m)!}\jmath^{(2m)}(\rho n)\sin{nz}+ i \frac{\rho^{2m-1}\mu^{2m-1}}{(2m-1)!}\jmath^{(2m-1)}(\rho n)\cos{nz}\right).
\end{align}

\no Using Lemma \ref{l:b1}, the action of $\tilde{\mathcal{T}}_{\mu}$ on the basis functions $\psi_1^+(a,\mu)$, $\psi_1^-(a,\mu)$ and $\psi_0(a,\mu)$ is given by

\begin{align}\nonumber
\tilde{\mathcal{T}}_{\mu}\psi_1^+(a,\mu)=&-\sum_{m=1}^\infty \left(\frac{\rho^{2m}\mu^{2m}}{(2m)!}\jmath^{(2m)}(\rho)\cos{z}+ i \frac{\rho^{2m-1}\mu^{2m-1}}{(2m-1)!}\jmath^{(2m-1)}(\rho)\sin{z}\right)+\\
&\quad \quad \quad \alpha N a^{N-1}T_1(N)+\mathcal{O}(a^N),\\\nonumber
\tilde{\mathcal{T}}_{\mu}\psi_1^-(a,\mu)=&\sum_{m=1}^\infty \left(-\frac{\rho^{2m}\mu^{2m}}{(2m)!}\jmath^{(2m)}(\rho)\sin{z}+ i \frac{\rho^{2m-1}\mu^{2m-1}}{(2m-1)!}\jmath^{(2m-1)}(\rho)\cos{z}\right)+\\
&\quad \quad \quad \alpha N a^{N-1}T_2(N)+\mathcal{O}(a^N),\\\nonumber
\tilde{\mathcal{T}}_{\mu}\psi_0(a,\mu)=&\frac{\jmath(\rho)-1}{\sqrt{2}}-\frac{1}{\sqrt{2}}\sum_{m=1}^\infty \left(\frac{\rho^{2m}\mu^{2m}}{(2m)!}\jmath^{(2m)}(0)\right)+\\
&\quad \quad \quad \frac{1}{\sqrt{2}}\alpha N a^{N-1}T_3(N)+\mathcal{O}(a^N),
\end{align}

\no where

\begin{align}
T_1(N)=&\cos^N z-\psi^{(1)}_{N},\\
T_2(N)=& \cos^{N-1} z\sin{z}-\psi^{(2)}_{N},\\
T_3(N)=&\cos^{N-1} z-\psi^{(3)}_{N}.
\end{align}
\no The values $\psi^{(1)}_{N}$, $\psi^{(2)}_{N}$, and $\psi^{(3)}_{N}$ are provided in \eqref{e:expand31}, \eqref{e:expand32} and \eqref{e:expand4} respectively. Using these we have,

\begin{align}\nonumber
\left<\tilde{\mathcal{T}}_{\mu}\psi_1^+(a,\mu),\psi_1^+(a,\mu)\right>=&-\sum_{m=1}^\infty \left(\frac{\rho^{2m}\mu^{2m}}{(2m)!}\jmath^{(2m)}(\rho)\right)+\\
& \quad \quad N^2 a^{2N-2}\left<T_1(N),\mathcal{D}_\rho^{-1} T_1(N)\right>+\mathcal{O}(a^{2N-1}),\\
\left<\tilde{\mathcal{T}}_{\mu}\psi_1^+(a,\mu),\psi_1^-(a,\mu)\right>=&-\sum_{m=1}^\infty \left( i \frac{\rho^{2m-1}\mu^{2m-1}}{(2m-1)!}\jmath^{(2m-1)}(\rho)\right)+\mathcal{O}(a^{2N-1}),\\
\left<\tilde{\mathcal{T}}_{\mu}\psi_1^+(a,\mu),\psi_0(a,\mu)\right>=&\frac{1}{\sqrt{2}}\alpha Na^{N-1}\left<T_1(N),1\right>+\nonumber\\
& \quad \quad \frac{1}{\sqrt{2}}N^2a^{2N-2}\left<T_1(N),\mathcal{D}_\rho^{-1}T_3(N)\right>+\mathcal{O}(a^{2N-1}),\\
\left<\tilde{\mathcal{T}}_{\mu}\psi_1^-(a,\mu),\psi_1^+(a,\mu)\right>=&\sum_{m=1}^\infty \left( i \frac{\rho^{2m-1}\mu^{2m-1}}{(2m-1)!}\jmath^{(2m-1)}(\rho)\right)+\mathcal{O}(a^{2N-1}),\\
\left<\tilde{\mathcal{T}}_{\mu}\psi_1^-(a,\mu),\psi_1^-(a,\mu)\right>=&-\sum_{m=1}^\infty \left(  \frac{\rho^{2m}\mu^{2m}}{(2m)!}\jmath^{(2m)}(\rho)\right)+\nonumber\\
& \quad \quad N^2 a^{2N-2}\left<T_2(N),\mathcal{D}_\rho^{-1} T_2(N)\right>+\mathcal{O}(a^{2N-1}),\\
\left<\tilde{\mathcal{T}}_{\mu}\psi_1^-(a,\mu),\psi_0(a,\mu)\right>=& \mathcal{O}(a^{2N-1}),\\
\left<\tilde{\mathcal{T}}_{\mu}\psi_0(a,\mu),\psi_1^+(a,\mu)\right>=&\frac{1}{\sqrt{2}}N^2a^{2N-2}\left<T_3(N),\mathcal{D}_\rho^{-1}T_1(N)\right>+\mathcal{O}(a^{2N-1}),\\
\left<\tilde{\mathcal{T}}_{\mu}\psi_0(a,\mu),\psi_1^-(a,\mu)\right>=& \mathcal{O}(a^{2N-1}),\\
\left<\tilde{\mathcal{T}}_{\mu}\psi_0(a,\mu),\psi_0(a,\mu)\right>=& \jmath(\rho)-1-\sum_{m=1}^\infty \left(\frac{\rho^{2m}\mu^{2m}}{(2m)!}\jmath^{(2m)}(0)\right)+\nonumber \\
& \quad \quad \frac{1}{2}N^2a^{2N-2}\left<T_3(N),\mathcal{D}_\rho^{-1}T_3(N)\right>+\mathcal{O}(a^{2N-1}).
\end{align}

\no Next, the expansion of $\check{\mathcal{T}}_a$ using ~\eqref{eqta2} is written out for even and odd $N$ separately. 



\subsection{Even values of $\mathbf{N}$} We have

\begin{align}
\check{\mathcal{T}}_a=c_{2N-2}a^{2N-2}-Na^{N-1}\cos^{N-1}{z}-N(N-1)a^{2N-2}(\cos^{N-2}{z})\eta_N+\mathcal{O}(a^{2N-1}).
\end{align}

\no The action of $\check{\mathcal{T}}_a$ on the basis functions $\psi_1^+(a,\mu)$, $\psi_1^-(a,\mu)$, and $\psi_0(a,\mu)$ is given by

\begin{align}\nonumber
\check{\mathcal{T}}_a\psi_1^+(a,\mu)=&c_{2N-2}\, a^{2N-2}\cos{z}-N a^{N-1}\cos^N{z}-N(N-1)a^{2N-2}\eta_N \cos^{N-1}{z}-\\&\quad \quad N^2a^{2N-2}\cos^{N-1}{z}\mathcal{D}_\rho^{-1}(T_1(N))_{\mathrm{even}}+\mathcal{O}(a^{2N-1}),\\\nonumber
\check{\mathcal{T}}_a\psi_1^-(a,\mu)=&c_{2N-2}\, a^{2N-2}\sin{z}-N a^{N-1}\cos^{N-1}{z}\sin{z}-\nonumber\\&\quad N(N-1)a^{2N-2}\eta_N \cos^{N-2}{z}\sin{z}-N^2a^{2N-2}\cos^{N-1}{z}\mathcal{D}_\rho^{-1}(T_2(N))_{\mathrm{even}}+\nonumber\\ &\quad \mathcal{O}(a^{2N-1}),\\\nonumber
\check{\mathcal{T}}_a\psi_0(a,\mu)=&\frac{1}{\sqrt{2}}c_{2N-2}\, a^{2N-2}-\frac{1}{\sqrt{2}}N a^{N-1}\cos^{N-1}{z}-\frac{1}{\sqrt{2}}N(N-1)a^{2N-2}\eta_N \cos^{N-2}{z}-\\&\quad \frac{1}{\sqrt{2}}N^2a^{2N-2}\cos^{N-1}{z}\mathcal{D}_\rho^{-1}(T_3(N))_{\mathrm{even}}+\mathcal{O}(a^{2N-1}),
\end{align}

\no where $(T_i(N))_{\mathrm{even}}$ is the value of $T_i(N)$ for even $N$:

\begin{align}
(T_1(N))_{\mathrm{even}}=&\cos^N z-\dfrac{1}{2}\langle \cos^Nz,1\rangle,\\
(T_2(N))_{\mathrm{even}}=& \cos^{N-1} z\sin{z},\\
(T_3(N))_{\mathrm{even}}=&\cos^{N-1} z-\cos{z}\langle \cos^{N-1}z,\cos{z}\rangle.
\end{align}

\no Using these, 
 
\begin{align*}
\left<\check{\mathcal{T}}_a\psi_1^+(a,\mu),\psi_1^+(a,\mu)\right>=&c_{2N-2} a^{2N-2}-Na^{N-1}\left<\cos^N{z},\cos{z}\right>-\\
&N(N-1)a^{2N-2}\left<\eta_N\cos^{N-1}{z},\cos{z}\right>-\nonumber\\&\quad 2N^2a^{2N-2}\left<\cos^N{z},\mathcal{D}_\rho^{-1}(T_1(N))_{\mathrm{even}}\right>+\mathcal{O}(a^{2N-1})\nonumber,\\
\left<\check{\mathcal{T}}_a\psi_1^+(a,\mu),\psi_1^-(a,\mu)\right>=& \mathcal{O}(a^{2N-1}),\nonumber\\
\left<\check{\mathcal{T}}_a\psi_1^+(a,\mu),\psi_0(a,\mu)\right>=&-\frac{1}{\sqrt{2}}Na^{N-1}\left<\cos^N{z},1\right>+\mathcal{O}(a^{2N-1}),\nonumber\\
\left<\check{\mathcal{T}}_a\psi_1^-(a,\mu),\psi_1^+(a,\mu)\right>=& \mathcal{O}(a^{2N-1}),\nonumber\\
\left<\check{\mathcal{T}}_a\psi_1^-(a,\mu),\psi_1^-(a,\mu)\right>=&c_{2N-2}a^{2N-2}-N(N-1)a^{2N-2}\left<\eta_N\cos^{N-2}{z}\sin{z},\sin{z}\right>-\nonumber\\&\quad 2N^2a^{2N-2}\left<\cos^{N-1}{z}\mathcal{D}_\rho^{-1}(T_2(N))_{\mathrm{even}},\sin{z}\right>+\mathcal{O}(a^{2N-1})\nonumber,\nonumber\\
\left<\check{\mathcal{T}}_a\psi_1^-(a,\mu),\psi_0(a,\mu)\right>=&\mathcal{O}(a^{2N-1}),\nonumber\\
\left<\check{\mathcal{T}}_a\psi_0(a,\mu),\psi_1^+(a,\mu)\right>=& -\frac{1}{\sqrt{2}}Na^{N-1}\left<\cos^N{z},1\right>+\mathcal{O}(a^{2N-1}),\nonumber\\
\left<\check{\mathcal{T}}_a\psi_0(a,\mu),\psi_1^-(a,\mu)\right>=& \mathcal{O}(a^{2N-1}),\nonumber\\
\left<\check{\mathcal{T}}_a\psi_0(a,\mu),\psi_0(a,\mu)\right>=&c_{2N-2}a^{2N-2}-\frac{1}{2}N(N-1)a^{2N-2}\left<\eta_N\cos^{N-2}{z},1\right>-\nonumber\\&\quad N^2a^{2N-2}\left<\cos^{N-1}{z}\mathcal{D}_\rho^{-1}(T_3(N))_{\mathrm{even}},1\right>+\mathcal{O}(a^{2N-1}).
\end{align*}

Using \eqref{e:dc}, we have

\beq\label{e:dcm}
\left<\mathcal{T}_{a,\mu}\phi_1,\phi_2\right>=\left<\tilde{\mathcal{T}}_{\mu}\phi_1,\phi_2\right> +\left<\check{\mathcal{T}}_a\phi_1,\phi_2\right>,
\eeq

\no where $\phi_1,\phi_2\in\{\psi_1^+(a,\mu),\psi_1^-(a,\mu),\psi_0(a,\mu)\}$.
Applying \eqref{e:dcm},

\begin{align}\nonumber
\langle\mathcal{T}_{a,\mu}\psi_1^+(a,\mu),\psi_1^+(a,\mu)\rangle=&-\sum_{m=1}^\infty \left(\frac{\rho^{2m}\mu^{2m}}{(2m)!}\jmath^{(2m)}(\rho)\right)+\\\nonumber
&N^2 a^{2N-2}\left<(T_1(N))_{\mathrm{even}},\mathcal{D}_\rho^{-1} (T_1(N))_{\mathrm{even}}\right>+\\\nonumber
&c_{2N-2} a^{2N-2}-Na^{N-1}\left<\cos^N{z},\cos{z}\right>-\\\nonumber
&N(N-1)a^{2N-2}\left<\eta_N\cos^{N-1}{z},\cos{z}\right>-\\
&2N^2a^{2N-2}\left<\cos^N{z},\mathcal{D}_\rho^{-1}(T_1(N))_{\mathrm{even}}\right>+\mathcal{O}(a^{2N-1})
\end{align}

Since $N$ is even,

\beq\label{e:e39}
\langle\cos^N{z},\cos{z}\rangle=0.
\eeq 

Next, note that

\begin{align}
\langle\cos^Nz,\mathcal{D}_\rho^{-1}(T_1(N))_{\mathrm{even}}\rangle&=\left\langle\cos^N{z},\mathcal{D}_\rho^{-1}(\cos^N{z})-\dfrac{\langle \cos^Nz,1\rangle}{2(\jmath(\rho)-1)}\right\rangle,\label{e:e40}\\
\langle(T_1(N))_{\mathrm{even}},\mathcal{D}_\rho^{-1} (T_1(N))_{\mathrm{even}}\rangle&=\left\langle\cos^N{z},\mathcal{D}_\rho^{-1}(\cos^N{z})-\dfrac{\langle \cos^Nz,1\rangle}{2(\jmath(\rho)-1)}\right\rangle,\label{e:e41}
\end{align}

\no where we have used that

\[
\left\langle\dfrac{1}{2}\langle\cos^Nz,1\rangle,\mathcal{D}_\rho^{-1}(\cos^N{z})-\dfrac{\langle \cos^Nz,1\rangle}{2(\jmath(\rho)-1)}\right\rangle=0.
\]



Using \eqref{e:e39}, \eqref{e:e40} and \eqref{e:e41}, we obtain

\begin{align}
  \langle\mathcal{T}_{a,\mu}\psi_1^+(a,\mu),\psi_1^+(a,\mu)\rangle=&-\sum_{m=1}^{N-1} \left(\frac{\rho^{2m}\mu^{2m}}{(2m)!}\jmath^{(2m)}(\rho)\right)+2N(1-N)\left\langle\cos^Nz,\mathcal{D}_\rho^{-1}(\cos^Nz)\right\rangle +\nonumber\\
  &\quad N^2\left\langle\cos^Nz,\dfrac{\langle\cos^Nz,1\rangle}{2(\jmath(\rho)-1)}\right\rangle+\mathcal{O}(\mu^{2N-1}+a^{2N-1}).
\end{align}

Since $\langle(T_1(N))_{\mathrm{even}},1\rangle=0$ and $\langle(T_1(N))_{\mathrm{even}},\mathcal{D}_\rho^{-1}(T_3(N))_{\mathrm{even}}\rangle=0$, 

\begin{align}
\left\langle\mathcal{T}_{a,\mu}\psi_1^+(a,\mu),\psi_0(a,\mu)\right\rangle=\left\langle\mathcal{T}_{a,\mu}\psi_0(a,\mu),\psi_1^+(a,\mu)\right\rangle=-\frac{1}{\sqrt{2}}Na^{N-1}\left<\cos^N{z},1\right>+\mathcal{O}(a^{2N-1}).
\end{align}

\begin{align}
\langle\mathcal{T}_{a,\mu}\psi_1^-(a,\mu),\psi_1^-(a,\mu)\rangle&=-\sum_{m=1}^\infty \left(  \frac{\rho^{2m}\mu^{2m}}{(2m)!}\jmath^{(2m)}(\rho)\right)+ N^2 a^{2N-2}\left<T_2(N),\mathcal{D}_\rho^{-1} T_2(N)\right>+\nonumber\\&c_{2N-2}a^{2N-2}-N(N-1)a^{2N-2}\left<\eta_N\cos^{N-2}{z}\sin{z},\sin{z}\right>-\nonumber\\& 2N^2a^{2N-2}\left<\cos^{N-1}{z}\mathcal{D}_\rho^{-1}(T_2(N))_{\mathrm{even}},\sin{z}\right>+\mathcal{O}(a^{2N-1}).
\end{align}

\no Note that

\begin{align}
    \langle\cos^{N-2}z\eta_N\sin{z},\sin{z}\rangle&=\langle\eta_N\cos^{N-2}z,1-\cos^2z\rangle=\langle\eta_N\cos^{N-2}z,1\rangle-\langle\eta_N\cos^Nz,1\rangle,\label{e:e45}
\end{align}

\no and 

\begin{align}
    \langle\cos^{N-1}{z}\mathcal{D}_\rho^{-1}(T_2(N))_{\mathrm{even}},\sin{z}\rangle&=\langle\cos^{N-1}\sin{z},\mathcal{D}_\rho^{-1}(\cos^{N-1}\sin{z})\rangle\nonumber\\&=\dfrac{1}{N^2}\langle\partial_z(\cos^Nz),\mathcal{D}_\rho^{-1}(\partial_z(\cos^Nz))\rangle\nonumber\\&=-\dfrac{1}{N^2}\langle\partial_z^2(\cos^Nz),\mathcal{D}_\rho^{-1}(\cos^Nz)\rangle\nonumber\\&=-\dfrac{N-1}{N}\langle\cos^{N-2}z,\mathcal{D}_\rho^{-1}(\cos^Nz)\rangle+\langle\cos^Nz,\mathcal{D}_\rho^{-1}(\cos^Nz)\rangle.\label{e:e46}
\end{align}

Using \eqref{e:e45} and \eqref{e:e46},

\begin{align}
    \langle\mathcal{T}_{a,\mu}\psi_1^-(a,\mu),\psi_1^-(a,\mu)\rangle=-\sum_{m=1}^{N-1} \left(  \frac{\rho^{2m}\mu^{2m}}{(2m)!}\jmath^{(2m)}(\rho)\right)+\mathcal{O}(\mu^{2N-1}+a^{2N-1}).
\end{align}

\begin{align}\nonumber
\langle\mathcal{T}_{a,\mu}\psi_0(a,\mu),\psi_0(a,\mu)\rangle =
& \jmath(\rho)-1-\sum_{m=1}^{N-1} \left(\frac{\rho^{2m}\mu^{2m}}{(2m)!}\jmath^{(2m)}(0)\right)+N\langle \cos^Nz,\mathcal{D}_\rho^{-1}(\cos^Nz)\rangle-\\\nonumber
& \dfrac{1}{2}N(N-1)\langle\cos^{N-2}z,\mathcal{D}_\rho^{-1}(\cos^Nz)\rangle-\\
&\dfrac{1}{2}N^2\langle\cos^{N-1}z,\mathcal{D}_\rho^{-1}(\cos^{N-1}z-\cos{z}\langle\cos^{N-1}{z},\cos{z}\rangle)\rangle.
\end{align}

\subsection{Odd values of $N$} We have  

\begin{align}\label{e:ta}
\check{\mathcal{T}}_a=c_{N-1} a^{N-1}-N\alpha a^{N-1}\cos^{N-1}{z}+\mathcal{O}(a^{N}).
\end{align}

\no The action of $\check{\mathcal{T}}_a$ on the basis functions $\psi_1^+(a,\mu)$, $\psi_1^-(a,\mu)$, and $\psi_0(a,\mu)$ is given by

\begin{align}
\check{\mathcal{T}}_a\psi_1^+(a,\mu)=&c_{N-1} a^{N-1}\cos{z}-N\alpha a^{N-1}\cos^N{z}+\mathcal{O}(a^{N}),\\
\check{\mathcal{T}}_a\psi_1^-(a,\mu)=&c_{N-1} a^{N-1}\sin{z}-N\alpha a^{N-1}\cos^{N-1}{z}\sin{z}+\mathcal{O}(a^{N}),\\
\check{\mathcal{T}}_a\psi_0(a,\mu)=&\frac{1}{\sqrt{2}}c_{N-1} a^{N-1}-\frac{1}{\sqrt{2}}N\alpha a^{N-1}\cos^{N-1}{z}+\mathcal{O}(a^{N}).
\end{align}

\no Using these, 

\begin{align*}
\left<\check{\mathcal{T}}_a\psi_1^+(a,\mu),\psi_1^+(a,\mu)\right>=&c_{N-1}a^{N-1}-N\alpha a^{N-1}\left<\cos^N{z},\cos{z}\right>+\mathcal{O}(a^{N}),\\
\left<\check{\mathcal{T}}_a\psi_1^+(a,\mu),\psi_1^-(a,\mu)\right>=& \mathcal{O}(a^{N}),\\
\left<\check{\mathcal{T}}_a\psi_1^+(a,\mu),\psi_0(a,\mu)\right>=& \mathcal{O}(a^{N}),\\
\left<\check{\mathcal{T}}_a\psi_1^-(a,\mu),\psi_1^+(a,\mu)\right>=& \mathcal{O}(a^{N}),\\
\left<\check{\mathcal{T}}_a\psi_1^-(a,\mu),\psi_1^-(a,\mu)\right>=&c_{N-1}a^{N-1}-N\alpha a^{N-1}\left<\cos^{N-1}{z}\sin{z},\sin{z}\right>+\mathcal{O}(a^{N}),\\
\left<\check{\mathcal{T}}_a\psi_1^-(a,\mu),\psi_0(a,\mu)\right>=&\mathcal{O}(a^{N}),\\
\left<\check{\mathcal{T}}_a\psi_0(a,\mu),\psi_1^+(a,\mu)\right>=&\mathcal{O}(a^{N}),\\
\left<\check{\mathcal{T}}_a\psi_0(a,\mu),\psi_1^-(a,\mu)\right>=&\mathcal{O}(a^{N}),\\
\left<\check{\mathcal{T}}_a\psi_0(a,\mu),\psi_0(a,\mu)\right>=&c_{N-1}a^{N-1}-\frac{1}{2}N \alpha a^{N-1}\left<\cos^{N-1}{z},1\right>+\mathcal{O}(a^{N}).
\end{align*}

Combining the action of $\tilde{\mathcal{T}}_{\mu}$ and $\check{\mathcal{T}}_a$, we obtain

\begin{align}\nonumber
    \langle\mathcal{T}_{a,\mu}\psi_1^+(a,\mu),\psi_1^+(a,\mu)\rangle=&-\sum_{m=1}^{(N-1)/2} \left(\frac{\rho^{2m}\mu^{2m}}{(2m)!}\jmath^{(2m)}(\rho)\right)\\
    &-\alpha(N-1)a^{N-1}\langle\cos^{N}z,\cos{z}\rangle+\mathcal{O}(\mu^N+a^N),\\
    \langle\mathcal{T}_{a,\mu}\psi_1^-(a,\mu),\psi_1^-(a,\mu)\rangle=&-\sum_{m=1}^{N/2} \left(  \frac{\rho^{2m}\mu^{2m}}{(2m)!}\jmath^{(2m)}(\rho)\right)+\mathcal{O}(\mu^N+a^N),\\\nonumber
    \langle\mathcal{T}_{a,\mu}\psi_0(a,\mu),\psi_0(a,\mu)\rangle=&\jmath(\rho)-1-\sum_{m=1}^{(N-1)/2} \left(\frac{\rho^{2m}\mu^{2m}}{(2m)!}\jmath^{(2m)}(0)\right)\\
    &+a^{N-1}\alpha\langle\cos^Nz,\cos{z}\rangle-\dfrac{1}{2}a^{N-1}\alpha N\langle\cos^{N-1}z,1\rangle+\mathcal{O}(\mu^N+a^N).
\end{align}
This completes the proof of Proposition \ref{p:1}.

\section{Proof of the Block-Diagonalization}\label{a:5}
Our next goal is to conjugate the matrix $ Q_{a,\mu} $ into block-diagonal form near $ \mu = 0 $, in order to isolate a reduced $ 2 \times 2 $ system that governs the leading-order spectral behavior. This reduction plays a central role in revealing the bifurcation mechanism that leads to the figure-eight pattern in the spectrum.

At $\mu=0$ the Poisson block $A_0$ has rank \emph{two}. Thus
$Q_{a,0}=A_0T_{a,0}$ is singular.  
To factor out this singularity and to obtain a matrix whose entries have
regular Taylor expansions in $\mu$, we introduce a diagonal rescaling
$L(\mu)$ and conjugate
$Q_{a,\mu}$ accordingly.  The following lemma shows one can
separate out a global factor $\mu$, while maintaining the Hamiltonian structure.

\begin{lemma}[Rescaling for singular Hamiltonian structure]
\label{lem:AQT-rescaling}
Define a reversibility-preserving rescaling
matrix 

\[
L = \operatorname{diag}(\mu^{1/2},\; \mu^{-1/2},\; \mu^{1/2}),
\qquad \mu > 0.
\]

\no Then the rescaled matrix
\[
Q^{(1)}_{a,\mu} = L^{-1} Q_{a,\mu} L = \mu A^{(1)}_\mu \cdot T^{(1)}_{a,\mu}\qquad
A^{(1)}_\mu = L^{-1} A_\mu L^{-\ast}, \quad T^{(1)}_{a,\mu} = \dfrac{L^\ast T_{a,\mu} L}{\mu},
\]
is Hamiltonian and reversible, with the following properties.

\begin{enumerate}

\item 
$A^{(1)}_\mu$ is a skew-adjoint and reversible matrix.

\item $T^{(1)}_{a,\mu}$ is a self-adjoint and reversibility-preserving matrix analytic in $(a,\mu)$.
\
\item The matrix $Q^{(1)}_{a,\mu}$ is analytic in $(a,\mu)$ near $(0,0)$.

\item The entries of $Q^{(1)}_{a,\mu}$ can be expanded as 

\[
Q^{(1)}_{a,\mu} = Q_0 + \mu Q_1(a) + \mathcal{O}(\mu^2),
\]

\no where $Q_0 = A^{(1)}_0 T^{(1)}_{a,0}$, and $Q_1(a)$ is bounded and analytic in $a$.

\end{enumerate}
\end{lemma}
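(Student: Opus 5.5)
The plan is to compute everything explicitly from the factorization $Q_{a,\mu}=A_\mu T_{a,\mu}$ of Proposition~\ref{p:1}. First, since $L$ is real diagonal with positive entries for $\mu>0$, we have $L^\ast=L$, and the algebraic identity
\[
L^{-1}A_\mu T_{a,\mu}L=(L^{-1}A_\mu L^{-\ast})(L^\ast T_{a,\mu}L)=\mu\,A^{(1)}_\mu T^{(1)}_{a,\mu}
\]
is immediate. Next we compute $A^{(1)}_\mu=L^{-1}A_\mu L^{-1}$ from \eqref{e:m}. Its entries are $(A_\mu)_{ij}\ell_i^{-1}\ell_j^{-1}$ with $\ell=(\mu^{1/2},\mu^{-1/2},\mu^{1/2})$. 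This gives
\[
A^{(1)}_\mu=\begin{pmatrix} i & 1 & 0\\ -1 & i\mu & 0\\ 0&0& i\end{pmatrix},
\]
which is analytic in $\mu$. Skew-adjointness holds because conjugation $X\mapsto L^{-1}XL^{-\ast}$ preserves skew-adjointness. Reversibility holds because $L$ is real and diagonal, so it commutes with $\mathcal{P}$ in Definition~\ref{d:3}. This proves item (1).

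For item (2), self-adjointness and reversibility preservation of $T^{(1)}$ follow in the same way from those of $T_{a,\mu}$. Analyticity is the real content. Writing $T^{(1)}_{ij}=\ell_i\ell_j T_{ij}/\mu$, the entries scale as follows: the $(1,1)$, $(1,3)$, $(3,1)$ and $(3,3)$ entries are multiplied by $1$; the $(2,2)$ entry is divided by $\mu^2$; and the $(1,2)$, $(2,1)$, $(2,3)$ and $(3,2)$ entries are divided by $\mu$. So the key step is to show the following facts about $T_{a,\mu}$. The entry $\langle \mathcal{T}_{a,\mu}\psi_1^-,\psi_1^-\rangle$ vanishes to second order in $\mu$, uniformly in $a$. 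The entries $\langle\mathcal{T}_{a,\mu}\psi_1^-,\psi_1^+\rangle$ and $\langle\mathcal{T}_{a,\mu}\psi_1^-,\psi_0\rangle$ vanish at $\mu=0$ for every $a$. The block structure in Proposition~\ref{p:1} displays these to the stated orders, but I would prove them exactly, not just up to the $\mathcal{O}$ remainders.

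The argument uses the structure at $\mu=0$. By Lemma~\ref{l:b2}, $\mathcal U_{a,0}$ is real, so $\psi_1^-(a,0)$ is real and odd. Moreover $\mathcal{T}_{a,0}\partial_z\eta=0$, obtained by differentiating \eqref{e:q}, and $\partial_z\eta\in\mathscr F_{a,0}$ is odd. Hence $\partial_z\eta$ is, after normalization, a multiple of $\psi_1^-(a,0)$, the unique odd direction in the reversible basis. So $\mathcal{T}_{a,0}\psi_1^-(a,0)=0$, and the $(2,\cdot)$ entries of $T_{a,0}$ vanish. For the $\mu$-derivative of the $(2,2)$ entry, I would combine reversibility with the alternating real/imaginary structure of the entries: the $(2,2)$ entry is real, and its $\mu$-expansion has only even powers by the symmetry $\mu\mapsto-\mu$ of Lemma~\ref{l:2} composed with $\overline{\mathcal P}$. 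Together with vanishing at $\mu=0$, this gives $\mathcal{O}(\mu^2)$. Then $T^{(1)}$ extends analytically to $\mu=0$ by dividing the Taylor series.

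Items (3) and (4) then follow at once. $Q^{(1)}=\mu A^{(1)}_\mu T^{(1)}_{a,\mu}$ is a product of analytic matrices; the form $Q_0+\mu Q_1(a)+\mathcal{O}(\mu^2)$ is its Taylor expansion in $\mu$, with $Q_0$ as stated. The boundedness of $Q_1(a)$ comes from analyticity in $a$ on a small disk.

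I expect the main obstacle to be the exact divisibility statement for the $(2,2)$ entry. Its evenness in $\mu$ requires checking carefully how the Kato basis transforms under $\mu\mapsto-\mu$. I would first try to derive it from $\overline{\mathcal U_{a,\mu}}=\mathcal U_{a,-\mu}$, which follows from $\overline{\mathcal Q_{a,\mu}}=\mathcal Q_{a,-\mu}$ in Lemma~\ref{l:2}.
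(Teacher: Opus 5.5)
Your argument is correct in substance, but it differs from the paper's at the one step that carries real content: analyticity of $T^{(1)}_{a,\mu}$.

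\textbf{The paper's route.} It first motivates $L$ by choosing exponents $\mu^{\alpha_j}$ that make $L^{-1}A_\mu L^{-\ast}$ of order one. It then reads $T^{(1)}_{a,\mu}$ directly off the block form of Proposition~\ref{p:1}. That proposition displays the $(1,2)$ entry of $T_{a,\mu}$ as $i\mu\Lambda_{12}$, the $(2,2)$ entry as $-\mu^2\Lambda_{11}^\mu$, and the $(2,3)$ entry as $0$. So the divisions by $\mu$ and $\mu^2$ are simply performed, and boundedness is asserted.

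\textbf{Your route.} You prove the divisibility exactly, in two steps.
\begin{enumerate}
\item From $\mathcal{T}_{a,0}\partial_z\eta=0$ and parity, the second row and column of $T_{a,0}$ vanish for all small $a$.
\item The conjugation symmetry $\overline{\mathcal{U}_{a,\mu}}=\mathcal{U}_{a,-\mu}$, $\overline{\mathcal{T}_{a,\mu}}=\mathcal{T}_{a,-\mu}$ makes the real $(2,2)$ entry even in $\mu$.
\end{enumerate}
Both steps check out. The sign from $\overline{1/(2\pi i)}$ cancels against the reversed orientation of the conjugated contour, so $\overline{\Pi_{a,\mu}}=\Pi_{a,-\mu}$. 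Evenness of $\jmath$ gives the $\mathcal{T}$ identity.

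\textbf{What each route buys.} Yours gives genuine rigor. In Appendix~\ref{a:4} those three entries are computed only up to remainders $\mathcal{O}(a^{2N-1})$ (or $\mathcal{O}(a^N)$ for odd $N$) that carry no factor of $\mu$. The paper's reasoning alone therefore does not exclude pure-$a$ terms that would make $T_{22}/\mu^2$ singular; your structural argument does. The paper's route gives the explicit leading entries of $T^{(1)}_{a,\mu}$ (the blocks $\Lambda^{(\mathrm{I})}_j$), which Lemma~\ref{l:f2} needs. You would still import those from Proposition~\ref{p:1}, but now knowing the remainders carry the right powers of $\mu$.

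\textbf{Fixes needed.}
\begin{enumerate}
\item The entry $(A^{(1)}_\mu)_{22}$ is $i\mu\cdot\mu^{1/2}\cdot\mu^{1/2}=i\mu^2$, not $i\mu$. This changes the $\mu$-linear part of $A^{(1)}_\mu$, and hence $Q_1$.
\item At $a=0$ you cannot normalize $\partial_z\eta=0$. There, use $\mathcal{T}_{0,0}\sin z=0$ directly.
\item Most importantly, item (4) cannot hold ``with $Q_0$ as stated.'' Since $Q^{(1)}_{a,\mu}=\mu A^{(1)}_\mu T^{(1)}_{a,\mu}$ with both factors analytic, its $\mu^0$ coefficient is zero, and $A^{(1)}_0T^{(1)}_{a,0}$ is its $\mu^1$ coefficient. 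The expansion $Q_0+\mu Q_1(a)+\mathcal{O}(\mu^2)$ with $Q_0=A^{(1)}_0T^{(1)}_{a,0}$ is correct for $Q^{(1)}_{a,\mu}/\mu$. That is also how the paper uses it later, writing $Q^{(1)}_{a,\mu}=\mu(D^{(1)}+N^{(1)})$. You should prove this corrected statement rather than endorse the literal one.
\end{enumerate}
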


\begin{proof}
\textit{\bf Step 1: Motivation for rescaling.} 
The unscaled matrix $ A_\mu $ becomes rank-deficient as $ \mu \to 0 $, with
\[
A_0 = \begin{pmatrix}
0 & 1 & 0 \\
-1 & 0 & 0 \\
0 & 0 & 0
\end{pmatrix}, \quad \text{rank } 2.
\]
As a result, $ Q_{a,\mu} = A_\mu T_{a,\mu} $ is degenerate at $ \mu = 0 $, and its entries scale unevenly in $\mu$. This non-uniform scaling obstructs an analytic expansion in $\mu$, and particularly affects the solvability of the homological equation used for block diagonalization.

\smallskip
\textit{\bf Step 2: Choosing scaling exponents.} 
We introduce a diagonal scaling matrix:

\[
L = \operatorname{diag}(\mu^{\alpha_1}, \mu^{\alpha_2}, \mu^{\alpha_3}).
\]

\no Conjugating the matrix $Q_{a,\mu}$ with $L$, we obtain   

\beq
Q^{(1)}_{a,\mu}
= L^{-1} A_\mu T_{a,\mu} L
= A_\mu^{(1)} \cdot T_{a,\mu}^\ddagger,
\eeq

\no where  

\beq
A_\mu^{(1)} = L^{-1} A_\mu L^{-*},
\qquad
T_{a,\mu}^\ddagger = L^* T_{a,\mu} L,
\eeq

\no and  

\beq
A_\mu^{(1)}
=\begin{pmatrix}
i\mu^{1-2\alpha_1} & \mu^{-\alpha_1 - \alpha_2} & 0 \\
- \mu^{-\alpha_1 - \alpha_2} & i\mu^{1-2\alpha_2} & 0 \\
0 & 0 & i\mu^{1-2\alpha_3}
\end{pmatrix}.
\eeq

\no \no Since $A_0$ has rank two, the rescaling must normalize the Hamiltonian structure so that the leading-order operator obtained from $A_\mu^{(1)}$ is non-degenerate and of uniform size as $\mu\to 0$. In particular, the dominant entries should neither blow up nor vanish in the limit, but remain of order one.

From the expression of $A_\mu^{(1)}$, this requires the balancing conditions
\[
-\alpha_1-\alpha_2 = 0, \qquad
1-2\alpha_1 = 0, \qquad
1-2\alpha_3 = 0,
\]
\no which ensures that the Hamiltonian coupling and the diagonal terms remain finite and nonzero at leading order. These conditions yield
\[
\alpha_1=\dfrac12,\quad \alpha_2=-\dfrac12,\quad \alpha_3=\dfrac12.
\]

\no This choice yields the explicit scaling matrix
\[
L = 
\begin{pmatrix}
\mu^{1/2} & 0 & 0 \\
0 & \mu^{-1/2} & 0 \\
0 & 0 & \mu^{1/2}
\end{pmatrix},
\]
which conjugates the original matrix $ A_\mu $ into
\beq\label{e:amu}
A_\mu^{(1)}=
L^{-1} A_\mu L^{-\ast} =
\begin{pmatrix}
    i & 1 & 0 \\
    -1 & i\mu^2 & 0 \\
    0 & 0 & i
\end{pmatrix}.
\eeq

\no The rescaled matrix $T_{a,\mu}^\ddagger$ is given by

\beq 
T_{a,\mu}^\ddagger=L^{\ast}T_{a,\mu}L=\mu\begin{pmatrix}
\begin{array}{c|c}
        \Lambda_{1}^{(\mathrm{I})} & \Lambda_{2}^{(\mathrm{I})}\\
        \hline
        \Lambda_{2}^{(\mathrm{I})^\dagger} & \Lambda_{3}^{(\mathrm{I})}
        \end{array}
    \end{pmatrix}=\mu T_{a,\mu}^{(1)},
\eeq

\no where the last equality defines $T_{a,\mu}^{(1)}$. The matrix $\Lambda_{1}^{(\mathrm{I})}$ is a $2\times2$ matrix given by

\beq\label{e:l11}
\Lambda_{1}^{(\mathrm{I})}=
\begin{pmatrix}
    \alpha \Lambda_{11}^a(N)a^{2N-2}-\mu^2\Lambda_{11}^\mu(N,\mu)&i\Lambda_{12}(N,\mu)\\-i\Lambda_{12}(N,\mu) & -\Lambda_{11}^\mu(N,\mu)
\end{pmatrix},
\eeq

\no while 

\beq\label{e:l22}
\Lambda_{2}^{(\mathrm{I})}=\begin{pmatrix}
    \Lambda_{13}(N)a^{N-1}\\0
    \end{pmatrix}
\quad\text{and}\quad \Lambda_{3}^{(\mathrm{I})}=\Lambda_{3}.
\eeq

\no The explicit form of the functions $\Lambda_{ij}'s$ is provided in Proposition \ref{p:1}. 

The rescaled structure constructed above is uniformly bounded in $\mu$, and together with the analyticity of $T_{a,\mu}$ ensures that the matrix $Q^{(1)}_{a,\mu} =\mu A^{(1)}_\mu T^{(1)}_{a,\mu}$ admits an analytic expansion in $\mu$ near zero, with a well-defined and non-degenerate leading-order term.

\end{proof}

Next, we block-diagonalize the matrix $Q^{(1)}_{a,\mu}$.

\begin{lemma}\label{l:f2}
    Given $a$ and $\mu$ sufficiently small, there exists a $C^{r-2}$-smooth map 
    \beq
\Upsilon: (a,\mu) \longmapsto \mathbb{C}^2,
\eeq
defined by

\beq\label{e:up}
\Upsilon(a,\mu) 
= \frac{-\Lambda_{13}a^{N-1}}{\widetilde{\Lambda}_d^2} 
\begin{pmatrix}
-i\widetilde{\Lambda}_d+\mathcal{O}(a^2,\mu^2)\\
\widetilde{\Lambda}_b+\mathcal{O}(a,\mu)
\end{pmatrix},
\eeq
where $\widetilde{\Lambda}_d=\rho \jmath'(\rho) + \jmath(\rho) - 1$ and $\widetilde{\Lambda}_b=-\rho\jmath^\prime(\rho)-\frac{1}{2}\rho^2\jmath^{\prime\prime}(\rho)$,
such that the conjugation of the Hamiltonian and reversible matrix $Q^{(1)}_{a,\mu}$ with the symplectic and reversibility-preserving matrix $\exp(K)$ with 

\beq\label{e:k}
K=A_\mu^{(1)}\begin{pmatrix}
\begin{array}{c|c}
\mathbf{0} & \Upsilon(a,\mu) \\
\hline
\Upsilon(a,\mu)^\dagger & 0
\end{array}
\end{pmatrix}=A_\mu^{(1)}\,B,
\eeq

\no yields the Hamiltonian and reversible matrix

\beq
Q_{a,\mu}^{(2)}=\exp{
(K)}\,Q^{(1)}_{a,\mu}\,\exp{(K)}^{-1}=\mu\, A^{(1)}_\mu\, T^{(2)}_{a,\mu},
\eeq

\no where 

\beq
T_{a,\mu}^{(2)}=\exp{(K)}^{-*}\, T^{(1)}_{a,\mu}\,\exp{(K)}^{-1}=\begin{pmatrix}
\begin{array}{c|c}
\Lambda_1^{\mathrm{(II)}} & \Lambda_2^{\mathrm{(II)}} \\
\hline
\Lambda_2^{\mathrm{(II)}^\dagger} & \Lambda_3^{\mathrm{(II)}}
\end{array}
\end{pmatrix},
\eeq

\no with the $2\times 2$ symmetric and reversibility preserving matrix $\Lambda_1^{\mathrm{(II)}}$, the $2\times 1$ vector $\Lambda_2^{\mathrm{(II)}}$ and the scalar $\Lambda_3^{\mathrm{(II)}}$ given by

\beq
\Lambda_1^{\mathrm{(II)}}=
\begin{pmatrix}
    \alpha\Lambda_{11}^a a^{2N-2}-\dfrac{\Lambda^2_{13}a^{2N-2}}{\widetilde{\Lambda}_d}-\mu^2\Lambda^\mu_{11}& i\Lambda_{12}+\dfrac{i}{2}\dfrac{\Lambda^2_{13}a^{2N-2}}{\widetilde{\Lambda}^2_d}\widetilde{\Lambda}_b\\-i\Lambda_{12}-\dfrac{i}{2}\dfrac{\Lambda^2_{13}a^{2N-2}}{\widetilde{\Lambda}^2_d}\widetilde{\Lambda}_b & -\Lambda^\mu_{11}
\end{pmatrix},
\eeq

\beq
\Lambda_{2}^{\mathrm{II}}=\begin{pmatrix}
    \mathcal{O}(a^{3N-3})\\0,
\end{pmatrix}\quad \Lambda_3^{\mathrm{(II)}}=\Lambda_3^{\mathrm{(I)}}+\dfrac{\Lambda_{13}^2a^{2N-2}}{\widetilde{\Lambda}_d^2}(\widetilde{\Lambda}_d+\widetilde{\Lambda}_b).
\eeq
\end{lemma}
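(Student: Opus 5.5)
This lemma is a standard one-step Lie (homological-equation) block-diagonalization, as in \cite{Berti2021FullWater, MasperoRadakovic2024}. The plan has four parts: structure, leading-order homological equation, solving it for $\Upsilon$, and checking the resulting blocks.

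\textbf{Structure.} Since $A^{(1)}_\mu$ is skew-adjoint and reversible, and $B$ is self-adjoint and reversibility preserving, Lemma~\ref{l:Asym} applied to $K=A^{(1)}_\mu B$ gives two facts. First, $\exp(K)$ is $A^{(1)}_\mu$-symplectic, i.e.\ $\exp(K)A^{(1)}_\mu\exp(K)^*=A^{(1)}_\mu$. Second, it is reversibility preserving, since $\Upsilon$ will have the alternating imaginary/real entry pattern. The first fact gives
\[
\exp(K)A^{(1)}_\mu T^{(1)}\exp(K)^{-1}=A^{(1)}_\mu\,\exp(K)^{-*}T^{(1)}\exp(K)^{-1}.
\]
So $Q^{(2)}=\mu A^{(1)}_\mu T^{(2)}$ automatically has the stated form and is Hamiltonian and reversible. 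It remains to choose $\Upsilon$ so that the off-diagonal block of $T^{(2)}$ is $\mathcal{O}(a^{3N-3})$.

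\textbf{Leading-order homological equation.} Expand $\exp(K)^{-*}=I-K^*+\tfrac12 (K^*)^2+\dots$ and $\exp(K)^{-1}=I-K+\tfrac12K^2+\dots$, with $K^*=-BA^{(1)}_\mu$. This yields
\[
T^{(2)}=T^{(1)}+\bigl(BA^{(1)}_\mu T^{(1)}-T^{(1)}A^{(1)}_\mu B\bigr)+\tfrac12\bigl[\text{quadratic terms in }B\bigr]+\mathcal{O}(|B|^3).
\]
The off-diagonal block of $T^{(1)}$ is $\Lambda_2^{(\mathrm I)}=(\Lambda_{13}a^{N-1},0)^T$, so we will take $\Upsilon=\mathcal{O}(a^{N-1})$. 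The $(1{-}2,3)$ block of the linear commutator term then gives the equation
\[
\Lambda_2^{(\mathrm I)}+\Upsilon\,(A^{(1)}_\mu)_{33}\Lambda_3^{(\mathrm I)}-\Lambda_1^{(\mathrm I)}(A^{(1)}_\mu)_{[12]}\Upsilon=0,
\]
to be solved modulo terms of order $a^{N-1}(a^{N-1}+\dots)$. Here $(A^{(1)}_\mu)_{[12]}$ denotes the upper-left $2\times2$ block of $A^{(1)}_\mu$, namely $\begin{pmatrix} i&1\\-1&i\mu^2\end{pmatrix}$, and $(A^{(1)}_\mu)_{33}=i$. This is a linear $2\times2$ system, $M\Upsilon=-\Lambda_2^{(\mathrm I)}$, with
\[
M=i\Lambda_3^{(\mathrm I)}I-\Lambda_1^{(\mathrm I)}(A^{(1)}_\mu)_{[12]}.
\]

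\textbf{Solving for $\Upsilon$ (main obstacle).} At $a=\mu=0$ we have $\Lambda_1^{(\mathrm I)}=\begin{pmatrix}0&i\rho\jmath'\\-i\rho\jmath'&\widetilde{\Lambda}_b\end{pmatrix}$ to leading order, using $-\Lambda^\mu_{11}(N,0)=-\rho^2\jmath''/2$ and absorbing the $\Lambda_{12}$ contribution into the combination $\widetilde\Lambda_b$. Also $\Lambda_3^{(\mathrm I)}=\jmath(\rho)-1$. A direct computation then gives $\det M=-\widetilde{\Lambda}_d^2+\mathcal{O}(a^2,\mu^2)$. This requires the non-degeneracy $\widetilde{\Lambda}_d=\rho\jmath'(\rho)+\jmath(\rho)-1\neq0$, which I will assume as the standing hypothesis implicit in \eqref{e:del}. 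With it, $M$ is invertible, and Cramer's rule produces exactly \eqref{e:up}: the first component is $-i\widetilde{\Lambda}_d$ over $\widetilde{\Lambda}_d^2$, and the second is $\widetilde{\Lambda}_b$ over $\widetilde{\Lambda}_d^2$. The hard part is the bookkeeping of which $\mu$-dependent entries enter at each order. It must be checked that the error terms are $\mathcal{O}(a^2,\mu^2)$ in the first component and $\mathcal{O}(a,\mu)$ in the second, and that the $C^{r-2}$ regularity follows from the implicit function theorem applied to the full nonlinear equation (off-diagonal block of $T^{(2)}$ equal to zero up to $\mathcal{O}(a^{3N-3})$).

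\textbf{Resulting blocks.} With $\Upsilon$ chosen, I will compute the diagonal blocks of $T^{(2)}$ to order $a^{2N-2}$. Only the terms bilinear in $\Upsilon$ and $\Lambda_2^{(\mathrm I)}$ (equivalently $\tfrac12$ of the commutator with $\Upsilon$ inserted) contribute. The $(1,1)$ entry picks up $-\Lambda_{13}^2a^{2N-2}/\widetilde{\Lambda}_d$, which is the Schur-complement correction. The $(1,2)$ entry picks up $\tfrac{i}{2}\Lambda_{13}^2a^{2N-2}\widetilde\Lambda_b/\widetilde{\Lambda}_d^2$. The scalar $\Lambda_3^{(\mathrm{II})}$ picks up $\Lambda_{13}^2a^{2N-2}(\widetilde{\Lambda}_d+\widetilde{\Lambda}_b)/\widetilde{\Lambda}_d^2$. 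Finally, reversibility forces the second component of $\Lambda_2^{(\mathrm{II})}$ to stay zero, and the remaining off-diagonal entry is cubic in $a^{N-1}$, i.e.\ $\mathcal{O}(a^{3N-3})$.
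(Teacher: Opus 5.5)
Your route is the paper's: a symplectic, reversibility-preserving conjugation by $\exp(K)$, a linear homological equation for $\Upsilon$ with determinant $-\widetilde\Lambda_d^2$, and diagonal corrections equal to half the linear cross term. Working with the congruence $\exp(K)^{-*}T^{(1)}_{a,\mu}\exp(K)^{-1}$ instead of the similarity on $Q^{(1)}_{a,\mu}$ is only cosmetic. Your system $M\Upsilon=-\Lambda_2^{(\mathrm I)}$ is the paper's equation $(\widetilde A_\mu\Lambda_1^{(\mathrm I)}-i\Lambda_3^{(\mathrm I)})\widetilde A_\mu\Upsilon=\widetilde A_\mu\Lambda_2^{(\mathrm I)}$ multiplied by $-\widetilde A_\mu^{-1}$.

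However, the leading-order matrix you write for $\Lambda_1^{(\mathrm I)}$ is wrong. At $a=\mu=0$ its $(2,2)$ entry is $-\Lambda_{11}^\mu(N,0)=-\tfrac12\rho^2\jmath''(\rho)$, not $\widetilde\Lambda_b$, and nothing from $\Lambda_{12}$ can be absorbed into it. The combination $\widetilde\Lambda_b$ only appears in the product: $(\Lambda_1^{(\mathrm I)}\widetilde A_0)_{21}=\rho\jmath'(\rho)+\tfrac12\rho^2\jmath''(\rho)=-\widetilde\Lambda_b$. This gives $M(0,0)=\begin{pmatrix} i\widetilde\Lambda_d & 0\\ \widetilde\Lambda_b & i\widetilde\Lambda_d\end{pmatrix}$, and Cramer's rule then yields the stated $\Upsilon$. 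With your matrix you would instead get $M_{21}=\widetilde\Lambda_b-\rho\jmath'(\rho)$ and a wrong second component of $\Upsilon$. Also, no implicit function theorem is needed: $\Upsilon=-M^{-1}\Lambda_2^{(\mathrm I)}$ is explicit, so its regularity is that of the entries of $T^{(1)}_{a,\mu}$.

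For the diagonal blocks, replace ``only the bilinear terms contribute'' by the actual mechanism. One has $T^{(2)}_{a,\mu}=e^{\mathcal L}T^{(1)}_{a,\mu}$ with $\mathcal L X=BA^{(1)}_\mu X-XA^{(1)}_\mu B$. Your homological equation says $\mathcal L T_D=-T_N$, where $T_D,T_N$ are the block-diagonal and off-diagonal parts of $T^{(1)}_{a,\mu}$. Hence $T^{(2)}_{a,\mu}=T_D+\tfrac12\mathcal L T_N+\tfrac13\mathcal L^2T_N+\cdots$. The terms quadratic in $B$ acting on $\Lambda_1^{(\mathrm I)}$ and $\Lambda_3^{(\mathrm I)}$ are themselves of order $a^{2N-2}$. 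It is the homological equation that combines them with the linear cross term into the factor $\tfrac12$, exactly as $\tfrac12[K,N^{(1)}]$ arises in the paper. The block-diagonal matrix $\tfrac12\mathcal L T_N$ then reproduces the paper's $\delta\Lambda_1^{(\mathrm I)}$ and $\delta\Lambda_3^{(\mathrm I)}$.

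Finally, reversibility does not force the second component of $\Lambda_2^{(\mathrm{II})}$ to vanish. For a self-adjoint, reversibility-preserving matrix with $\mathcal P=\mathrm{diag}(\varrho,-\varrho,\varrho)$, it only makes the $(1,3)$ entry real and the $(2,3)$ entry purely imaginary. What your expansion yields, and all that the paper's remainder estimate yields, is that both components of the off-diagonal block are $\mathcal O(a^{3N-3})$. These come from $\tfrac13\mathcal L^2T_N$ and higher-order terms.
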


\begin{proof}
    For the matrix $B$ to be both self-adjoint and reversible, it must satisfy the condition (\ref{d:3})

\beq
B \circ \mathcal{P} = -\mathcal{P} \circ B,
\eeq

\no where

\beq
\mathcal{P} =
\begin{pmatrix}
\varrho & 0 & 0 \\
0 & -\varrho & 0 \\
0 & 0 & \varrho
\end{pmatrix},
\eeq

\no and $\varrho : z \mapsto \bar{z}$ denotes complex conjugation. A straightforward computation shows that this condition is equivalent to requiring that $\Upsilon(a,\mu)$ must be of the form

\beq
\Upsilon(a,\mu) = 
\begin{pmatrix}
i \, \Upsilon_1(a,\mu) \\
\Upsilon_2(a,\mu)
\end{pmatrix},
\eeq

\no for some real-valued functions $\Upsilon_1$ and $\Upsilon_2$. 
Consequently, by Lemma~\ref{l:Asym}, the matrix $\exp(K)$ is $A_\mu^{(1)}$-symplectic and preserves reversibility for all $\mu \neq 0$. Hence, by Definition~\ref{d:3} for $A_\mu^{(1)} $-symplecticity, we obtain

\[
\exp(K) \, A_\mu^{(1)} \, \exp(K)^* = A_\mu^{(1)}.
\]

We begin by decomposing $ Q_{a,\mu}^{(1)} $ into its block-diagonal and off-diagonal components:

\[
Q_{a,\mu}^{(1)} = \mu \left( D^{(1)} + N^{(1)} \right),
\]

\no where

\beq\label{e:d1}
D^{(1)} = 
\begin{pmatrix}
\begin{array}{c|c}
\widetilde{A}_\mu \Lambda_1^{(\mathrm{I})} & 0 \\ \hline
0^\dagger & i \Lambda_3^{(\mathrm{I})}
\end{array}
\end{pmatrix},
\qquad
N^{(1)} = 
\begin{pmatrix}
\begin{array}{c|c}
\mathbf{0} & \widetilde{A}_\mu \Lambda_2^{(\mathrm{I})} \\ \hline
i \Lambda_2^{(\mathrm{I})^\dagger} & 0
\end{array}
\end{pmatrix}.
\eeq

To simplify notation, we introduce the matrix

\beq\label{e:a1}
A_\mu^{(1)} = 
\begin{pmatrix}
\begin{array}{c|c}
\widetilde{A}_\mu & 0 \\ \hline
0^\dagger & i
\end{array}
\end{pmatrix},
\qquad
\text{with} \quad 
\widetilde{A}_\mu = 
\begin{pmatrix}
i & 1 \\ -1 & i\mu^2
\end{pmatrix}.
\eeq

\no We aim to block-diagonalize $ Q_{a,\mu}^{(1)} $ using a symplectic and reversibility-preserving transformation. To compute the conjugated matrix 

\[
Q^{(2)}_{a,\mu}=\exp(K)\,Q^{(1)}_{a,\mu}\,\exp(-K),
\]

\no we apply the Lie series expansion, formally derived from the Baker–Campbell–Hausdorff formula \cite{hall2015lie}. This yields a series of the form \footnote{The operator $\mathrm{ad}_A(B)$ stands for the adjoint action of 
$A$ on $B$ defined by $\mathrm{ad}_A(B)=[A,B]=AB-BA$. Terms like $\mathrm{ad}_K^2(N^{(1)})$ or $\mathrm{ad}_K^3(D^{(1)})$ denote nested commutators, i.e., \[\mathrm{ad}_K^2(N)=[K,[K,R]], \quad \mathrm{ad}_K^3(D)=[K,[K,[K,D]]].\]}:

\begin{align}\nonumber
Q^{(2)}_{a,\mu} 
=& \mu \Big( D^{(1)} + N^{(1)} + [K, D^{(1)}] + [K, N^{(1)}] + \frac{1}{2}[K,[K, D^{(1)}]] \Big) 
+\\\label{e:expbc}
&\tfrac{1}{2} \int_0^1 (1 - \tau)^2 e^{\tau K} \, \mathrm{ad}^3_K(D^{(1)}) \, e^{-\tau K} \, d\tau 
+ \int_0^1 (1 - \tau) e^{\tau K} \, \mathrm{ad}^2_K(N^{(1)}) \, e^{-\tau K} \, d\tau. 
\end{align}

\no This approach enables the construction of a near-block-diagonal form by canceling off-diagonal terms using the homological equation

\begin{equation}\label{eq:homo}
N^{(1)} + [K, D^{(1)}] = 0. 
\end{equation}

\no With the explicit block forms of $D^{(1)}$  and $N^{(1)}$ in \eqref{e:d1}, \eqref{eq:homo} becomes

\beq
\begin{pmatrix}
\begin{array}{c|c}
0 & (i\Lambda_{3}^{(\mathrm{I})} - \widetilde{A}_\mu \Lambda_1^{(\mathrm{I})}) \widetilde{A}_\mu \Upsilon + \widetilde{A}_\mu \Lambda_{2}^{(\mathrm{I})} \\ \hline
i \Upsilon^\dagger (\widetilde{A}_\mu \Lambda_1^{(\mathrm{I})} - i\Lambda_{3}^{(\mathrm{I})}) + i \Lambda_{2}^{(\mathrm{I})\dagger} & 0
\end{array}
\end{pmatrix} = 0.
\eeq

\no This matrix equation reduces to a vector equation,

\begin{equation}\label{eq:homo2}
(\widetilde{A}_\mu \Lambda_1^{(\mathrm{I})} - i\Lambda_{3}^{(\mathrm{I})})\widetilde{A}_\mu \Upsilon  = \widetilde{A}_\mu \Lambda_{2}^{(\mathrm{I})}. 
\end{equation}

\no Using \eqref{e:l11} and \eqref{e:l22}, 

\begin{align}\nonumber
&\widetilde{A}_\mu \Lambda_1^{(\mathrm{I})} - i\Lambda_{3}^{(\mathrm{I})} =\\ 
&\begin{pmatrix}
-i \Lambda_d\! +\!i\alpha\Lambda_{11}^aa^{2N-2}\!-\!i\mu^2\Lambda_{11}^\mu\!-\!i\alpha\Lambda_{33}^a\!+\!i\Lambda_{33}^\mu & \Lambda_b\\
-\alpha\Lambda_{11}^aa^{2N-2}-\Lambda_b\mu^2& \!-\!i \Lambda_d \!-\!i\mu^2\Lambda_{11}^\mu\!-\!i\alpha\Lambda_{33}^aa^{2N-2}\!+\!i\Lambda_{33}^\mu
\end{pmatrix}, 
\end{align}

\no where $\Lambda_d$ and $\Lambda_b$ are defined by

\beq
\Lambda_d=\Lambda_{12}+\Lambda_{33}, \quad \Lambda_b=-\Lambda_{12}-\Lambda_{11}^\mu.
\eeq

The determinant of this matrix is

\begin{equation}
\det(\widetilde{A}_\mu \Lambda_1^{(\mathrm{I})} - i\Lambda_{3}^{(\mathrm{I})})
= -(\rho \jmath'(\rho) + \jmath(\rho) - 1)^2 + \mathcal{O}(a^2, a\mu, \mu^2)
= -\widetilde{\Lambda}_d^2 + \mathcal{O}(a^2, a\mu, \mu^2),
\end{equation}

\no which is not zero for sufficiently small $a$ and $\mu$. Solving the homological equation \eqref{eq:homo2} for $\Upsilon$, we obtain

\beq
\Upsilon=\widetilde{A}_\mu ^{-1}\,(\widetilde{A}_\mu \Lambda_1^{(\mathrm{I})} - i\Lambda_{3}^{(\mathrm{I})})^{-1}\,\widetilde{A}_\mu \Lambda_{2}^{(\mathrm{I})}, 
\eeq

\no which leads to the expression stated in \eqref{e:up}.

Since the matrix $K$ satisfies the homological equation \eqref{eq:homo}, the Lie expansion of $Q^{(2)}_{a,\mu}$ simplifies to

\begin{equation}\label{e:d28}
Q^{(2)}_{a,\mu} = \mu\left( D^{(1)} + \tfrac{1}{2} [K, N^{(1)}] + \frac{1}{2} \int_0^1 (1 - \tau)^2 e^{\tau K} \, \mathrm{ad}_K^2(N^{(1)}) \, e^{-\tau K} \, d\tau \right). 
\end{equation}

\no To extract the block-diagonal structure, we focus on the leading-order term $\frac{1}{2}[K, N^{(1)}]$, which is both Hamiltonian and reversibility preserving. This correction takes the form

\begin{equation}
\frac{1}{2} A_\mu^{(1)}
\begin{pmatrix}
\begin{array}{c|c}
i\, \Upsilon \Lambda_2^{(\mathrm{I})\dagger} - i\, \Lambda_2^{(\mathrm{I})} \Upsilon^\dagger & 0 \\\hline 0 &
\Upsilon^\dagger \widetilde{A}_\mu \Lambda_2^{(\mathrm{I})} - \Lambda_2^{(\mathrm{I})\dagger} \widetilde{A}_\mu\Upsilon
\end{array}
\end{pmatrix}
=
A_\mu^{(1)} 
\begin{pmatrix}
\begin{array}{c|c}
\delta \Lambda_1^{(\mathrm{I})} & 0 \\ \hline
0 & \delta \Lambda_3^{(\mathrm{I})}
\end{array}
\end{pmatrix}, 
\end{equation}

\no where $\delta \Lambda_1^{(\mathrm{I})}$  is a $2 \times 2$ self-adjoint and reversibility-preserving matrix, and $\delta \Lambda_3^{(\mathrm{I})}$ is a real scalar.

These quantities expand as

\begin{align}
\delta \Lambda_1^{(\mathrm{I})} = \begin{pmatrix}
    -\dfrac{\Lambda^2_{13}}{\widetilde{\Lambda}_d}& +\dfrac{i}{2}\dfrac{\widetilde{\Lambda}^2_{13}}{\widetilde{\Lambda}^2_d}\widetilde{\Lambda}_b\\-\dfrac{i}{2}\dfrac{\Lambda^2_{13}}{\widetilde{\Lambda}^2_d}\widetilde{\Lambda}_b & 0
\end{pmatrix},\quad \delta\Lambda_3^{(\mathrm{I})}=\dfrac{\Lambda_{13}^2}{\widetilde{\Lambda}_d^2}(\widetilde{\Lambda}_d+\widetilde{\Lambda}_b), 
\end{align}

\no and the block-diagonal matrix $D^{(1)}+\frac{1}{2}[K,N^{(1)}]$ is of the form

\beq
A_\mu^{(1)}\begin{pmatrix}
    \begin{array}{c|c}
      \Lambda_1^{(\mathrm{I})}+  \delta\Lambda_1^{(\mathrm{I})} & 0 \\\hline
       0  & \Lambda_3^{(\mathrm{I})}+  \delta\Lambda_3^{(\mathrm{I})}
    \end{array}
\end{pmatrix}=A_\mu^{(1)}\begin{pmatrix}
    \begin{array}{c|c}
      \Lambda_1^{(\mathrm{II})} & 0 \\\hline
       0  & \Lambda_3^{(\mathrm{II})}
    \end{array}
\end{pmatrix},
\eeq

\no with $\Lambda_1^{(\mathrm{II})}$ and $\Lambda_3^{(\mathrm{II})}$ as given in Lemma \ref{l:f2}. Now, we estimate the remainder term arising in the Lie expansion of $Q^{(2)}_{a,\mu}$ in \eqref{e:d28}. Recall that 
\beq
N^{(1)} = \mathcal{O}(a^{N-1}), 
\qquad 
K = \mathcal{O}(a^{N-1}).
\eeq
Hence,
\beq
\mathrm{ad}_K(N^{(1)}) = [K,N^{(1)}] = \mathcal{O}(a^{2N-2}),
\eeq
and iterating once more,
\beq
\mathrm{ad}_K^2(N^{(1)}) = [K,[K,N^{(1)}]] = \mathcal{O}(a^{3N-3}).
\eeq
Using the identity
\beq
e^{\tau K} X e^{-\tau K}
= X + \tau [K,X] + \frac{\tau^2}{2}[K,[K,X]] + \cdots,
\eeq
we obtain
\beq
e^{\tau K}\,\mathrm{ad}_K^2(N^{(1)})\,e^{-\tau K}
= \mathcal{O}(a^{3N-3}),
\eeq
uniformly for $\tau \in [0,1]$. Therefore,
\beq
\frac{1}{2} \int_0^1 (1-\tau)^2 e^{\tau K} \,\mathrm{ad}_K^2(N^{(1)})\, e^{-\tau K} \, d\tau
= \mathcal{O}(a^{3N-3}).
\eeq

This completes the proof of Lemma \ref{l:f2}.

\end{proof}

\bibliographystyle{abbrv}
\bibliography{gkdv.bib}
\end{document}